\numberwithin{equation}{section}
\newcommand{\bb}{\mathbb}
\newtheorem{theorem}{Theorem}[section]
\newtheorem*{theorem*}{Theorem}
\newtheorem{lemma}[theorem]{Lemma}
\newtheorem{claim}[theorem]{Claim}
\newtheorem{proposition}[theorem]{Proposition}
\newtheorem{corollary}[theorem]{Corollary}
\newtheorem{conjecture}{Conjecture}
\newtheorem{definition}[theorem]{Definition}
\theoremstyle{definition}{

\newtheorem*{definition*}{Definition}

\newtheorem*{question*}{Question}
\newtheorem*{example*}{Example}
\newtheorem*{examples*}{Examples}

\newtheorem*{remark*}{Remark}

}
\newcommand\abs[1]{\left|#1\right|}
\newcommand\pth[1]{\left ( #1 \right )}
\newcommand\brak[1]{\left [ #1 \right ]}
\newcommand{\norm}[1]{\left\lvert\left\lvert#1\right\rvert\right\rvert}
\newcommand{\floor}[1]{\left\lfloor#1\right\rfloor}
\newcommand{\ceil}[1]{\left\lceil#1\right\rceil}
\newcommand{\ip}[1]{\left < #1 \right >}
\newcommand{\fk}{\mathfrak}
\newcommand{\mc}{\mathcal}
\newcommand{\ms}[1]{\mathscr{#1}}
\newcommand\nc{\newcommand}
\nc{\wt}{\widetilde}
\nc{\kernel}{\text{ker}}
\nc{\image}{\text{Im}}
\renewcommand{\d}[1]{\mathrm{d}#1}
\renewcommand{\bf}[1]{\textbf{#1}}
\renewcommand{\rm}[1]{\mathrm{#1}}
\def\ep{\varepsilon}
\def\p{\partial}
\def\N{\mathbb{N}}
\def\Z{\mathbb{Z}}
\def\R{\mathbb{R}}
\def\C{\mathbb{C}}
\def\P{\mathbb{P}}
\def\O{\mathcal{O}}
\def\E{\mathbb{E}}
\def\GOE{\mathrm{GOE}}
\def\GUE{\mathrm{GUE}}
\def\AS{\mathrm{AS}}
\def\ld{\lambda}
\def\a{\mathrm{a}}
\def\Pf{\mathrm{Pf}}
\def\fc{\mathfrak{c}}
\def\Ai{\mathrm{Ai}}
\begin{document}

\title[The Lower Tail of the Half-Space KPZ Equation]{The Lower Tail of the Half-Space KPZ Equation}
\author{Yujin H. Kim}
\address{Y.\ H.\ Kim\hfill\break
Courant Institute\\ New York University\\
251 Mercer Street\\ New York, NY 10012, USA.}
\email{\textcolor{blue}
{\href{mailto:yujin.kim@cims.nyu.edu}{yujin.kim@cims.nyu.edu}}}

\subjclass[2010]{Primary: 60H15. Secondary: 60B20,  45M05, 60F10, 60G55, 60H25.}

\keywords{(Half-space) Kardar-Parisi-Zhang equation, Pfaffian point processes, GOE ensemble, large deviations, stochastic Airy operator, Ablowitz-Segur Solution to Painlev\'{e} II}

\begin{abstract}
We establish the first tight bound on the lower tail probability of the half-space KPZ equation with Neumann boundary parameter $A = -1/2$ and narrow-wedge initial data.
When the tail depth is of order $T^{2/3}$,
the lower bound demonstrates a crossover between a regime of super-exponential decay with exponent $\frac{5}{2}$ (and leading pre-factor $\frac{2}{15 \pi}T^{1/3}$) and a regime with exponent $3$ (and leading pre-factor $\frac{1}{24}$); the upper bound demonstrates a crossover between a regime with exponent $\frac{3}{2}$ (and arbitrarily small pre-factor) and a regime with exponent $3$ (and leading pre-factor $\frac{1}{24}$). 
We show that, given a crude leading-order asymptotic in the \textit{Stokes region} (Definition~\ref{def:stokes},
first defined in (Duke Math J., \cite{Bot17})) for the Ablowitz-Segur solution to the Painlev\'e II equation, the upper bound on the lower tail probability can be improved to demonstrate the same crossover as the lower bound.
We also establish novel bounds on the large deviations of the GOE point process.


\end{abstract}

\maketitle

\tableofcontents

\section{Introduction}
\label{sec:intro}

The Kardar-Parisi-Zhang (KPZ) equation is formally given by 
\begin{align}
\p_T H(T,X) = \frac{1}{2} \p_X^2 H(T,X) + \frac{1}{2} \pth{\p_X H(T,X)}^2 + \xi(T,X) \,,
\label{KPZ_defn}
\end{align}
where $T\geq 0$, $X \in \R$, and  $\xi$ is Gaussian space-time white noise with covariance  $\E \brak{\xi(T,X) \xi(S,Y)} = \delta(T-S) \delta(X-Y)$.
A  physically relevant notion of solution to this equation is given by the
\emph{Cole-Hopf solution to the KPZ equation} with \emph{narrow-wedge initial data}
\begin{align}
    H(T,X) :=  \log Z(T,X), 
    \ \ \ \ \ 
    \text{with } Z(0,X) = \delta_0(X)\,,
    \label{Cole-Hopf_soln}
\end{align}
where $Z$ solves the $(1+1)\rm{d}$ stochastic heat equation (SHE) with multiplicative space-time white noise
\begin{align}
    \p_T Z(T,X) = \frac{1}{2}\p_X^2 Z(T,X) + Z(T,X) \xi(T,X).
    \label{SHE_defn}
\end{align}
The well-definedness of \eqref{Cole-Hopf_soln} is given by the work of
\cite{Mue91} establishing almost-sure positivity of $Z$ for a wide class of initial data (including delta initial data).

The KPZ equation is a paradigmatic model in a class of models, known as the KPZ universality class,
whose long-time limit is the KPZ fixed point. While this universality class is not strictly defined, all models in this class should share specific salient features.
The KPZ equation itself has been shown to govern the long-time limits under weak asymmetric scaling of many other models in the universality class. The notes and surveys \cite{Fer10}, \cite{Cor12}, \cite{Cor14} \cite{QS15}, \cite{Sas16}, \cite{Tak18}, and \cite{Zyg18} provide further reading on various aspects of the KPZ universality class.

Just as in the full-space case, the \textit{half-space KPZ equation with Neumann boundary conditions} plays a significant role within the half-space KPZ universality class. 
Mathematical analysis of the half-space analogues of  models believed to lie in the KPZ universality class began with the work of \cite{BR01, IS04}, both of which consider variants of half-space TASEP.
For a recent result relating to half-space TASEP, see \cite{BBCS16}.
Progress has been especially fruitful in the case of ASEP.
\cite{CS16} established convergence of the height function of half-space ASEP under weakly asymmetric scaling to the half-space KPZ equation with Neumann boundary parameter $A \geq 0$. 
Following this result, \cite{BBCW17} established an exact one-point distribution formula for half-space ASEP with $A = - 1/2$, and \cite{Par19} was able to extend the work of \cite{CS16} to show convergence to the half-space KPZ equation for all real $A$.
See, for instance, \cite{KLD18a}, \cite{Wu18}, \cite{KLD19}, \cite{BBC20}, \cite{BKL20}, \cite{Lin20}, and  \cite{BFO21} for additional results in the half-space KPZ universality class. 

We now describe the half-space KPZ equation in detail.

\subsection{The half-space KPZ equation with Neumann boundary conditions}
\label{subsec:intro-hskpz}
This paper seeks to establish bounds on the lower tail of the half-space KPZ equation with Neumann boundary condition, an object which we presently define.

\begin{definition}[Mild solution to the half-space SHE, half-space KPZ]
We say $\ms{Z}(T,X)$ is a \bf{mild solution} to the SHE given in \eqref{SHE_defn} on $\R_+$ with delta initial data at the origin and \textbf{Robin boundary condition} with parameter $A \in \R$
\begin{align}
    \p_X \ms{Z}(T,X) \bigg |_{X=0} &= A \ms{Z}(T,0) \,, ~\forall T >0 \,,
\end{align}
if $\ms{Z}(T, \cdot)$ is adapted to the filtration given by 
$\sigma\pth{ \ms{Z}(0, \cdot), W|_{[0,T]} }$ 
and
the following Duhamel-form identity is satisfied
\begin{align}
    \ms{Z}(T,X) &= \int_0^{\infty} \ms{P}_T^R(X,Y) Z(0,Y) ~dY \\
    &+ 
    \int_0^T \int_0^{\infty} \ms{P}^R_{T-S}(X,Y) Z(S,Y) \xi(S,Y) ~dW_S(dY) 
\end{align}
for all $T>0$ and $X >0$. Here, the last integral is It\^o with respect to the cylindrical Wiener process $W$, and $\ms{P}^R$ is the heat kernel on $[0,\infty)$, i.e., the fundamental solution to the heat equation on $[0, \infty)$, satisfying the Robin boundary condition
\begin{align}
    \p_X \ms{P}_T^R(X,Y) \bigg |_{X=0} &= A \ms{P}_T^R(0,Y) \,, ~\forall T> 0 \,, ~Y> 0 \,.
\end{align}
The Hopf-Cole solution to the \textbf{half-space KPZ equation with Neumann boundary parameter}~$A$ is then defined to be $H = \log \ms{Z}$.
\end{definition}

\cite[Proposition~4.2]{Par19} establishes  the existence, uniqueness, and almost-sure positivity of $\ms{Z}(T, \cdot)$ for all $A \in \R$, which makes the Hopf-Cole solution to the half-space KPZ equation with Neumann boundary condition $A \in \R$ well-defined.

Our paper establishes tight bounds on the lower tail probability of $H(T,0)$, that is, the probability that $\ms{Z}(T,0)$ is very close to $0$, or equivalently, that $H(T,0)$ is very negative, for the critical boundary parameter $A = -1/2$.
Our result builds on the method used by \cite{CG18} to find analogous bounds for the full-space KPZ lower tail.

We now explain the choice of boundary parameter $A =  - 1/2$. For this particular boundary parameter, \cite[Theorem~1.1]{Par19} established Tracy-Widom GOE fluctuations at the origin.

\begin{proposition}[{\cite[Theorem~1.3]{Par19}}]
\label{Par19.thm.1.1}
Let $H(T,X)$ be the solution to the half-space KPZ equation with inhomogeneous Neumann boundary parameter $A = -1/2$ and narrow-wedge initial data (which corresponds to $\delta_0$ initial data for the SHE). Then the following weak convergence result holds
\begin{align}
    \lim_{T \to \infty} \P(\Upsilon_T \leq s)
    = F_{\GOE}(s) \,, \ \ \ \ 
    \rm{where}  \ \ 
    \Upsilon_T &:= 
    \dfrac{H(2T,0) + \frac{T}{12}}{T^{1/3}}\,. \label{Intro.Y_T}
\end{align}
Here, $F_{\GOE}(s)$ is the Tracy-Widom GOE fluctuations \cite{TW96},
and $\Upsilon_T$ is the solution to the KPZ equation after centering and re-scaling.
\end{proposition}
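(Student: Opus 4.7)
The plan is to transfer the question to a discrete model that is exactly solvable at the boundary parameter corresponding to $A = -1/2$, and then perform asymptotic analysis on its explicit formula. First, I would approximate $H(2T,0)$ by the rescaled height function at the wall of half-space ASEP under weakly asymmetric scaling, with the boundary jump/creation/annihilation rates tuned so that in the limit one obtains the Robin parameter $-1/2$. This replacement is exactly the content of \cite{CS16} (and its extension in \cite{Par19}), and reduces the problem to the large-$T$ analysis of a one-point marginal of an ASEP observable at the wall.

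Second, the distribution of this ASEP observable is given by a Fredholm Pfaffian of an explicit double-contour kernel built from the free-boundary Pfaffian Schur process, cf.~\cite{BBCW17}; the value $A = -1/2$ is precisely the critical point at which this Pfaffian formula is available. I would then carry out a steepest-descent / saddle-point analysis: rescale variables on the $T^{1/3}$ window around $-T/12$, locate the Airy-type critical point of the exponential factor in the kernel, deform the contours through it in the directions of steepest descent, and identify the pointwise limit of the kernel entries as those giving the Fredholm Pfaffian representation of $F_{\GOE}$ in \cite{TW96}.

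The main obstacle, and the step I expect to take the most work, is justifying the interchange of the large-$T$ limit with the infinite Pfaffian expansion. This requires uniform exponential decay estimates for the rescaled kernel entries along the deformed contours, together with a Hadamard-type bound for Pfaffians to dominate each term of the series by a summable expression. The boundary factor induced by $A = -1/2$ complicates matters: one must quantify the interplay between the bulk Airy saddle and the extra boundary contribution, and in particular show that no spurious mass escapes the $T^{1/3}$ window. Once these uniform bounds are established, dominated convergence inside the Pfaffian expansion yields $\lim_{T\to\infty}\P(\Upsilon_T \leq s) = F_{\GOE}(s)$.
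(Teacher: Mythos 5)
This proposition is not proved in the paper at all; it is a verbatim restatement of \cite[Theorem~1.3]{Par19}, so strictly speaking there is no in-paper proof to compare against. It is still worth flagging where your sketch diverges from the route actually taken in \cite{Par19} and \cite{BBCW17}, since the paper's own discussion around Proposition~\ref{Intro.ltf} points at that route. You have collapsed two limits that are handled separately. The steepest-descent analysis you describe is performed in \cite{BBCW17}, but its output is not the Pfaffian kernel of $F_{\GOE}$: it is the exact \emph{finite-time} Laplace transform identity of Proposition~\ref{Intro.ltf}, in which a multiplicative functional of the GOE point process already appears on the right-hand side with no $T\to\infty$ limit yet taken. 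Combined with the weakly asymmetric convergence of \cite[Theorem~1.2]{Par19}, this yields \eqref{Intro.ltf.eq} for every fixed $T>0$. The passage to $F_{\GOE}$ is then soft. Substituting $u = \tfrac14 e^{T^{1/3}s}$ as in \eqref{Intro.lpf_usub}, the left side tends to $\P(\Upsilon_T\le -s)$ by the $\exp(-\exp(\cdot))\approx\mathds{1}(\cdot\le 0)$ approximation (exactly the device the present paper makes rigorous in Section~\ref{section.proof.thm.1.1.anal}), while each factor $(1+\exp(T^{1/3}(\a_k+s)))^{-1/2}$ on the right converges monotonically to $\mathds{1}(\a_k\le -s)$, so the product converges to $\mathds{1}(\a_1\le -s)$ and the expectation to the gap probability $F_{\GOE}(-s)$.

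The gap in your proposal is therefore structural rather than technical: there is no need to deform contours through an Airy saddle to recognize the Fredholm Pfaffian of $F_{\GOE}$, and attempting to do so in a single double-scaling limit $(\ep, T)\to(0,\infty)$ would require simultaneous control of the weakly asymmetric and long-time regimes in the ASEP kernel, a much harder problem than the one solved. Your description of the kernel-term tail estimates and Hadamard-type bounds is the kind of work needed for the ASEP Pfaffian asymptotics in \cite{BBCW17}, but those asymptotics deliver the multiplicative functional over GOE, not $F_{\GOE}$ itself; the final step is then a dominated-convergence argument on the exact identity, and you should say explicitly that this is what closes the proof.
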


For other choices of $A$, establishing the limiting fluctuations of $\Upsilon_T$ has been elusive, 
and thus establishing lower tail bounds in these regimes seems at the moment unfeasible. 
\cite[Conjecture~1.2]{Par19}
gives a conjecture establishing exactly two more regimes of distinct fluctuations: $A < - 1/2$, with Gaussian fluctuations, and $A>-1/2$, with Tracy-Widom GSE distribution \cite{TW96}. 
\cite[Section~1.3]{Par19} gives a heuristic argument for the Gaussianity of the $A<-1/2$ regime; see also \cite{Par19-2}. \cite{GLD12, BBC16, KLD19} provides strong evidence towards the conjectured $A>-1/2$ regime, though we emphasize that no part of this conjecture has been rigorously established.

On the other hand, for $A= -1/2$, we have
access to Proposition \ref{Intro.ltf}, which provides the starting point for our analysis.

\begin{proposition}[\cite{Par19}]
\label{Intro.ltf}
Let $H(T,X)$ denote the solution to the half-space KPZ equation on $[0,\infty)$ with Neumann boundary parameter $A = -1/2$ and narrow-wedge initial data. Then for $u > 0$, 
\begin{align}
    \E_{\rm{SHE}} \brak{ \exp \pth{ - u \exp \pth{H(2T,0) + \frac{T}{12}}
    }} 
    &=
    \E_{\GOE} \brak{
    \prod_{k=1}^{\infty} \dfrac{1}{\sqrt{
    1+ 4 u \exp \pth{T^{1/3} \a_k}
    }}
    }. \label{Intro.ltf.eq}
\end{align}
Here, the $(\a_1 > \a_2 > \dots)$ form the GOE point process (defined in Section \ref{subsection_GOEPP}).
\end{proposition}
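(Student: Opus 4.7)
The plan is to derive a finite-$T$ Fredholm Pfaffian formula for the Laplace transform on the left-hand side of \eqref{Intro.ltf.eq} and then match it to a Pfaffian expansion of the GOE multiplicative functional on the right. The starting point would be the exact Laplace transform formula for half-space ASEP at the boundary parameter corresponding to $A = -1/2$, derived in \cite{BBCW17}, which expresses this transform as a Fredholm Pfaffian $\Pf(J - K^{\rm{ASEP}})$. Applying the weakly asymmetric scaling limit established by \cite{CS16} (and leveraged in \cite{Par19} to derive the GOE fluctuations in Proposition~\ref{Par19.thm.1.1}) should produce a finite-$T$ identity
\begin{equation*}
\E_{\rm{SHE}}\brak{\exp\pth{-u\exp\pth{H(2T,0) + \tfrac{T}{12}}}} = \Pf(J - K^{\rm{KPZ}}_T),
\end{equation*}
for an explicit $2 \times 2$ matrix kernel $K^{\rm{KPZ}}_T$ built from Airy-type functions.

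Next, I would exploit the Pfaffian structure of the GOE point process (reviewed in Section~\ref{subsection_GOEPP}): its correlation functions are Pfaffians of a known $2\times 2$ matrix kernel $K_{\GOE}$, and for any Pfaffian point process with sufficiently nice test function $g$ the multiplicative functional satisfies
\begin{equation*}
\E\brak{\prod_k (1 - g(\a_k))} = \Pf(J - g K_{\GOE})_{L^2(\R)}
\end{equation*}
whenever both sides make sense. Choosing $g(x) = 1 - (1 + 4u e^{T^{1/3} x})^{-1/2}$, so that $1 - g(x) = (1 + 4u e^{T^{1/3} x})^{-1/2}$, would identify the right-hand side of \eqref{Intro.ltf.eq} with the Fredholm Pfaffian $\Pf(J - g K_{\GOE})$.

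The crux of the argument would then be to verify the kernel identity, i.e., that $\Pf(J - K^{\rm{KPZ}}_T)$ and $\Pf(J - g K_{\GOE})$ agree after a suitable change of variables in the $L^2$-trace. I expect this to follow from contour deformations on the Mellin--Barnes-type integrals defining $K^{\rm{KPZ}}_T$, standard integral representations for the Airy function, and algebraic manipulations that collapse the four entries of each $2\times 2$ matrix kernel into the desired form. The main obstacle is precisely this kernel identification: tracking the ASEP Pfaffian kernel through the weakly asymmetric scaling limit and matching the resulting Airy-type expressions against the GOE kernel weighted by $g$ is technically delicate, and uniform tail bounds in both $T$ and $u$ would be needed to ensure both Fredholm Pfaffians are simultaneously convergent and to justify the chain of identities. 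A cleaner alternative I would consider is to bypass ASEP by appealing directly to a stochastic Airy operator representation of $K_{\GOE}$, which might make the $T^{1/3}$ scaling and the weight $g$ more transparent, at the cost of needing a trace-class perturbation argument on the Airy side.
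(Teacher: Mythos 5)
This proposition is not proved in the paper; it is a citation. The paper explicitly states (immediately after the proposition) that it follows by combining the half-space ASEP Laplace transform formula of \cite[Theorem~7.6]{BBCW17} with Parekh's weakly asymmetric scaling limit \cite[Theorem~1.2]{Par19}. Your proposal correctly identifies these two ingredients and so reconstructs the intended route at the roadmap level.

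Where your emphasis differs from how the work is actually divided in the literature: you locate the crux in the ``kernel identification,'' i.e., matching the limit of the ASEP Pfaffian kernel against $g K_{\GOE}$. In fact \cite{BBCW17} already performs the kernel-level asymptotics that turn their ASEP Fredholm Pfaffian into precisely the multiplicative functional of the GOE Pfaffian point process appearing on the right of \eqref{Intro.ltf.eq}; that identification was their conjectured ``expected'' formula for KPZ. The piece that was genuinely missing, and that \cite{Par19} supplies, is the rigorous convergence of the half-space ASEP height function to the half-space Hopf--Cole KPZ solution under weak asymmetry for the relevant boundary parameter, together with the uniform integrability needed to pass the Laplace transform formula through that limit. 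Your plan treats this step (``Applying the weakly asymmetric scaling limit \ldots should produce a finite-$T$ identity'') as routine, but it is the actual content of the cited contribution; conversely, the contour deformations and Mellin--Barnes manipulations you flag as delicate are already carried out in \cite{BBCW17} and need not be redone. Your ``cleaner alternative'' of bypassing ASEP via the stochastic Airy operator is not how the literature establishes this finite-$T$ identity: the SAO connection (used later in the paper via \cite{RRV11}) gives the GOE point process as a limit of tridiagonal ensembles, but does not by itself produce a Fredholm Pfaffian formula for the KPZ Laplace transform at fixed $T$.
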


Taking     $u := \frac{1}{4} \exp \pth{T^{1/3} s}$ in \eqref{Intro.ltf.eq}
and recalling $\Upsilon_T$ from \eqref{Intro.Y_T}, we obtain
\begin{align}
    \E_{\rm{SHE}} \brak{\exp \pth{- \frac{1}{4}\exp \pth{T^{1/3}(\Upsilon_T + s)}}}  
    &= 
    \E_{\GOE} \brak{
    \prod_{k=1}^{\infty} \dfrac{1}{\sqrt{
    1+\exp \pth{ T^{1/3} \pth{\a_k + s}}
    }}
    } 
    \label{Intro.lpf_usub}.
\end{align}
Note that the function 
$\exp \pth{ -\exp (x) }$ is an  approximate of the indicator function $\mathds{1}(x \leq 0 )$, and so the integrand of the left-hand side of \eqref{Intro.lpf_usub} approximates $\P(\Upsilon_T + s \leq 0)$ for large $s$. 
This heuristic is made rigorous in Section \ref{section.proof.thm.1.1.anal}.
Proposition \ref{Intro.ltf} was conjectured in \cite[Theorem~7.6]{BBCW17}, which proves the analogous formula for the height function of half-space ASEP and computes asymptotics which were expected to lead to the above result on the KPZ equation.
Combining their result with \cite[Theorem~1.2]{Par19}  yields Proposition \ref{Intro.ltf}.

We our now ready to state our main result, Theorem~\ref{thm.1.1.anal}, which establishes upper and lower bounds on the lower       tail probability $\P(\Upsilon_T \leq - s)$ for large but fixed times $T>0$.
\begin{theorem}
\label{thm.1.1.anal}
Let $\Upsilon_T$ denote the solution to the half-space KPZ equation with Neumann boundary parameter $A = -1/2$ and narrow-wedge initial data, centered and re-scaled as in~\eqref{Intro.Y_T}.
Fix any $\eta >0$,
$\ep \in (0,1/3)$, $\delta \in (0,1/4)$, and $T_0 >0$. There exist positive constants $S := S (\eta, \ep, \delta, T_0)$, $C := C(T_0)$, $K_1 := K_1(\ep, \delta, T_0)$, and $K_2 := K_2(T_0)$ 
such that for all $s \geq S$ and $T \geq T_0$, we have 
\begin{align}
    \P \pth{\Upsilon_T \leq -s}
    &\geq 
    e^{
    -\frac{2(1+C\ep)}{15\pi}T^{1/3} s^{5/2}
    }
    + 
    e^{- K_2 s^3} \,, \label{1.5.eqn.anal}
\end{align}
and 
\begin{align}
    \P \pth{\Upsilon_T \leq -s}  
    &\leq 
    e^{
    -\frac{2(1- C\ep)}{15 \pi} T^{1/3} s^{5/2}
    }
    +
    e^{ -\frac{\ep}{2} sT^{1/3} -\eta s^{3/2}}
    +e^{-\frac{1-C\ep}{24}s^3} \,.
    \label{1.4.eqn.anal.weak}
\end{align}
Assuming
Conjecture~\ref{conjecture}, we have the stronger
\begin{align}
    \P \pth{\Upsilon_T \leq -s}  
    &\leq 
    e^{
    -\frac{2(1- C\ep)}{15 \pi} T^{1/3} s^{5/2}
    }
    +
    e^{ -\frac{\ep}{2} sT^{1/3} -K_1 s^{3- \delta}}
    +e^{-\frac{1-C\ep}{24}s^3} \,.
    \label{1.4.eqn.anal}
\end{align}
\end{theorem}
Conjecture~\ref{conjecture} has a rather technical statement regarding the leading-order asymptotics of \textit{Ablowitz-Segur solution} $u_{\AS}(x; \gamma)$ to the \textit{Painlev\'e II equation} in a certain region, named the \textit{Stokes region}. Its openness is due to the difficulty of a certain Riemann-Hilbert problem. 
One major goal of this article is to highlight the direct connection between leading-order asymptotics of $u_{\AS}(x; \gamma)$ in the Stokes region and the lower-tail of the KPZ equation, in hopes of motivating further study of the Stokes region. For the sake of a more stream-lined discussion of Theorem~\ref{thm.1.1.anal} and its proof, we postpone a detailed discussion of Conjecture~\ref{conjecture} and the Painlev\'e II equation to Section~\ref{subsec:uas}.
The proof of Theorem~\ref{thm.1.1.anal} is given in Section \ref{section.proof.thm.1.1.anal}. We note that \eqref{1.4.eqn.anal.weak} and \eqref{1.4.eqn.anal} differ only in the second term of each.

We can see Theorem~\ref{thm.1.1.anal}  displays three distinct regions of decay as follows. 
First, note that Proposition \ref{Par19.thm.1.1} implies that,  as $T \to \infty$, $\P(\Upsilon_T < - s)$ should decay according to $F_{\GOE}(-s)$, which is approximately $\exp\pth{-\frac{1}{24}s^3}$ for large $s$ (see Proposition \ref{prop.TW.GOE.dist}). 
This  cubic decay is exhibited in the last terms of \eqref{1.5.eqn.anal}---\eqref{1.4.eqn.anal}. 
Note that in the range $T^{2/3} \gg s \gg 0$,
either the second or the third term of \eqref{1.4.eqn.anal} dominates; in \eqref{1.5.eqn.anal}, the second term dominates
(though in the lower bound \eqref{1.5.eqn.anal}, the prefactor of the cubic exponent is not explicit). 
When $T \to \infty$, the third term of \eqref{1.4.eqn.anal} dominates and thus recovers the cubic decay of the $F_{\GOE}$ tail.
On the other hand, in the ``short time deep tail" region $s \gg T^{2/3}$, the first term of both \eqref{1.4.eqn.anal} and \eqref{1.5.eqn.anal} dominates; however, in \eqref{1.4.eqn.anal.weak}, the second term dominates the first term in all regions. 
The $5/2$ exponent and the $\frac{2}{15 \pi}$ prefactor for this region were first observed in \cite{KLD18}.
The crossover from $5/2$ to cubic exponent that occurs  when $s$ is of order $T^{2/3}$
can be understood in terms of large deviations: as $T \to \infty$, the crossover is exhibited by the large deviation rate function for the half-space KPZ equation, which has speed $T^2$. 
In the full-space case, this crossover was first predicted by
\cite{SMP17}, which also contains the first prediction of the full-space rate function; \cite{CGK+18, KLD18a, KLDP18} each provide alternative methods of computing this rate function. In particular, \cite{CGK+18} showed that the half-space rate function is simply one-half that of the full space.
The rate functions for both the full and half-space case were finally rigorously established by \cite{Tsa18}.
Just over a year after the posting of this paper, the preprint \cite{Zho20} obtained sharper upper and lower bounds than in Theorem~\ref{thm.1.1.anal}
by proving large deviation bounds for the Airy point process. 
In particular, their upper-bound on the lower tail probability is given by
$e^{
    -\frac{2(1- C\ep)}{15 \pi} T^{1/3} s^{5/2}
    }
    +
    e^{ -\frac{\ep}{2} sT^{1/3} -Ks^3}
    +e^{-\frac{1-C\ep}{24}s^3}$,
so that the aforementioned crossover from exponent $5/2$ to $3$ is attained.
Large deviation bounds for the Airy point process were originally (non-rigorously) derived by \cite{CGK+18} using Coulomb gas heuristics.

The techniques used to prove Theorem~\ref{thm.1.1.anal} are heavily inspired by the work of \cite{CG18} on the lower tail of the full-space KPZ equation. Their work starts with
the full-space KPZ analog to \eqref{Intro.ltf.eq}, which was established in \cite{BG16}, where the full-space KPZ equation is related to a multiplicative functional of the Airy (GUE) point process by manipulations of an exact formula for the one-point distribution of SHE with delta initial data. This one-point distribution formula was simultaneously and independently computed in 
\cite{ACQ11, SS10, CLDR, Dot10} and rigorously proved in \cite{ACQ11}. 
In \cite{CG18}, the formula of \cite{BG16} was manipulated to yield tight bounds on the lower tail of the full-space KPZ equation; however, in order to do this, \cite{CG18} first establishes appropriate control on the fluctuations of the GUE point process.
Their work strongly suggests that a careful manipulation of \eqref{Intro.lpf_usub} would similarly yield tight bounds on the lower tail of the half-space KPZ equation, given analogous control on the GOE point process; indeed, this is the approach taken in the current article. We now outline our approach to studying the GOE point process and the methods used therein.

\subsection{Fluctuations of the GOE point process}
\label{subsec:intro-GOEPP}
In Section~\ref{subsection_GOEPP}, we define the GOE point process and describe its key properties as a \textit{simple Pfaffian point process} (also defined in that section). 
The estimates on the GOE point process needed in this article pertain to \textbf{(1)} controlling the locations of individual GOE points, and \textbf{(2)} controlling the number of GOE points within intervals. 

Towards \textbf{(1)}, we detail in Section~\ref{subsection.SAO} the well-studied connection between the \textit{(stochastic) Airy operator} (SAO) and the GOE points, and describe the relevant known results (Propositions~\ref{rrv11_thm}---\ref{airy_eval_bound}). 
In particular, the seminal work of \cite{RRV11} (Proposition~\ref{rrv11_thm}) gives an equivalence in distribution between the eigenvalues of the $\beta =1$ SAO and the GOE points, while \cite[Proposition~4.5]{CG18} (Proposition~\ref{prop4.5_cg18} below) establishes uniform control on the deviations of the (random) SAO eigenvalues from deterministic locations given by the eigenvalues $(\ld_k)$ of the (deterministic) \textit{Airy operator}. Theorem~\ref{1.6_anal} below is then simply the combination of Proposition~\ref{rrv11_thm} and Proposition~\ref{prop4.5_cg18}.
\begin{theorem}
\label{1.6_anal}
For $\ep \in (0,1)$, let $C_{\ep}^{\GOE}$ be the smallest real number such that, for all $k \geq 1$, 
\begin{align}
    (1- \ep) \lambda_k - C_{\ep}^{\GOE} \leq 
    -\a_k
    \leq (1+ \ep) \lambda_k + C_{\ep}^{\GOE} \,, \label{1.6.anal.eqn}
\end{align}
where $\a_k$ is the $k^{th}$ largest point of the GOE point process and $\lambda_k$ is the $k^{th}$ smallest eigenvalue of the Airy operator.
Then, for all $\ep, \delta \in (0,1)$, there exist constants $S_0 := S_0(\ep, \delta)$ and $\kappa := \kappa( \ep, \delta)$ such that, for all $s \geq S_0$, 
\begin{align}
    \P(C_{\ep}^{\GOE} \geq s) \leq \kappa \exp \pth{- \kappa s^{1-\delta}}. 
\end{align}
\end{theorem}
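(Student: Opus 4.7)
The plan is to prove Theorem~\ref{1.6_anal} by directly combining the two inputs flagged in the preceding paragraph: the Ram\'{i}rez--Rider--Vir\'{a}g distributional identity between the GOE point process and the spectrum of the $\beta=1$ stochastic Airy operator (Proposition~\ref{rrv11_thm}), and the uniform SAO eigenvalue deviation bound of \cite{CG18} (Proposition~\ref{prop4.5_cg18}).

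First, I would apply Proposition~\ref{rrv11_thm} to realize the negated GOE points $(-\a_k)_{k\geq 1}$ (increasing in $k$) as equal in distribution, as a random sequence, to the ordered eigenvalue sequence $(\Lambda_k)_{k\geq 1}$ of the $\beta = 1$ SAO. Under this identification the two-sided sandwich \eqref{1.6.anal.eqn} that defines $C_{\ep}^{\GOE}$ reads
\begin{align*}
(1-\ep)\lambda_k - C_{\ep}^{\GOE} \leq \Lambda_k \leq (1+\ep)\lambda_k + C_{\ep}^{\GOE}, \qquad k \geq 1,
\end{align*}
so that the law of $C_{\ep}^{\GOE}$ coincides with that of the smallest random constant, call it $C_{\ep}^{\mathrm{SAO}}$, for which these inequalities hold with the SAO eigenvalues in the middle.

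Next, I would invoke Proposition~\ref{prop4.5_cg18}, which furnishes an explicit random constant $\widetilde{C}_{\ep}$ satisfying precisely these sandwich inequalities for all $k \geq 1$ and obeying, for every $\delta \in (0,1)$, a tail bound of the form $\P(\widetilde{C}_{\ep} \geq s) \leq \kappa \exp(-\kappa s^{1-\delta})$, valid for all $s$ sufficiently large, with $\kappa = \kappa(\ep,\delta)$. By the minimality built into the definition of $C_{\ep}^{\mathrm{SAO}}$, one has $C_{\ep}^{\mathrm{SAO}} \leq \widetilde{C}_{\ep}$ almost surely, and hence
\begin{align*}
\P\pth{C_{\ep}^{\GOE} \geq s} = \P\pth{C_{\ep}^{\mathrm{SAO}} \geq s} \leq \P\pth{\widetilde{C}_{\ep} \geq s} \leq \kappa \exp\pth{-\kappa s^{1-\delta}}
\end{align*}
for all $s \geq S_0(\ep,\delta)$, which is the desired conclusion.

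Since both Propositions are imported as prior art, there is essentially no residual obstacle: the only points requiring care are (i) aligning sign and ordering conventions so that the $k$-th largest GOE point corresponds to the $k$-th smallest SAO eigenvalue, and (ii) verifying that passing from the explicit constant produced in Proposition~\ref{prop4.5_cg18} to the minimal admissible constant can only weaken the tail---both of which are routine bookkeeping rather than genuine mathematical difficulty.
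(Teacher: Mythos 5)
Your proposal is correct and takes essentially the same approach as the paper, which states this theorem follows immediately from Propositions~\ref{rrv11_thm} and~\ref{prop4.5_cg18}. The only cosmetic slip is your framing of the constant in Proposition~\ref{prop4.5_cg18} as "explicit" and hence potentially larger than the minimal constant: in fact $C_\ep$ there is already \emph{defined} as the smallest admissible constant, so under the distributional identification of Proposition~\ref{rrv11_thm} one has $C_\ep^{\GOE} \overset{(\d)}{=} C_\ep$ (with $\beta=1$) rather than a one-sided bound, though this does not affect the validity of the argument.
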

Theorem~\ref{1.6_anal} establishes an upper bound on the probability that the $\a_k$ deviate away from the (deterministic) $\ld_k$, uniformly in $k$.
This is extremely helpful because we know what the $\ld_k$ look like: Proposition \ref{airy_eval_bound} tells us 
that\footnote{Here, $f(k) \sim g(k)$ if they are asymptotically equivalent, i.e., $\lim_{k \to \infty} \frac{f(k)}{g(k)} = 1$.} 
$\ld_k \sim \pth{\frac{3\pi}{2}k}^{2/3}$.

Towards \textbf{(2)}, we define the counting function
\[
\chi^{\GOE} : \mc{B}(\R) \to \Z_{\geq 0}, 
\ \ \ \ \ 
\chi^{\GOE}(B) := \#\{k: \a_k \in B\}, ~\forall B \in \mc{B}(\R),
\]
where $\mc{B}(\R)$ denotes the Borel $\sigma$-algebra of $\R$.  $\chi^{\GOE}(\cdot)$ is a non-negative integer-valued random measure on $(\R, \mc{B}(\R), \mu)$, where $\mu$ denotes the Lebesgue measure on $\R$, that, informally speaking, counts the number of GOE points in a Borel set $B$--- see Section~\ref{subsection_GOEPP} for a formal description. We will also refer to $\chi^{\GOE}$ as the GOE point process. The mean of $\chi^{\GOE}$ on intervals is given by Theorem~\ref{1.3_anal_prop} below, which is proved at the end of Section~\ref{subsection_GOEPP}.

\begin{theorem}
\label{1.3_anal_prop}
Define the interval $\fk{B}_1(s) := [-s, \infty)$.
For any $s > 0$, we have
\begin{align}
\E_{\GOE}\brak{\chi^{\GOE}(\fk{B}_1(s))}
&= \frac{2}{3\pi}s^{3/2} + D_1(s) \label{1.3_exp},
\end{align}
where $\sup_{s > 0} \abs{D_1(s)} < \infty$.
\end{theorem}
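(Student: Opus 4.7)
The plan is to exploit the Pfaffian structure of $\chi^{\GOE}$ developed in Section~\ref{subsection_GOEPP}. Since the expected counting measure equals the integral of the one-point intensity $\rho_1^{\GOE}$,
\begin{align*}
\E_{\GOE}\brak{\chi^{\GOE}(\fk{B}_1(s))} = \int_{-s}^{\infty} \rho_1^{\GOE}(x)\,\d{x},
\end{align*}
and $\rho_1^{\GOE}$ has an explicit closed form involving $\Ai$, $\Ai'$, and the tail integral $\int_x^{\infty}\Ai(y)\,\d{y}$, coming from the $2\times 2$ matrix-valued Pfaffian kernel at the soft edge. The proof therefore reduces to an asymptotic analysis of this density at $\pm\infty$.

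For $x\to+\infty$, classical Airy asymptotics give super-exponential decay $\rho_1^{\GOE}(x) = O(\exp(-c x^{3/2}))$, so $\int_0^{\infty}\rho_1^{\GOE}(x)\,\d{x}$ is a finite, $s$-independent constant. For $x\to-\infty$ (writing $x=-t$, $t\to\infty$), the Christoffel--Darboux identity gives $K_{\Ai}(x,x) = (\Ai'(x))^2 - x\Ai(x)^2$, and the leading oscillatory asymptotics $\Ai(-t)\sim \pi^{-1/2}t^{-1/4}\sin\phi_t$ and $\Ai'(-t)\sim -\pi^{-1/2}t^{1/4}\cos\phi_t$, with $\phi_t=\tfrac{2}{3}t^{3/2}+\tfrac{\pi}{4}$, combine via $\sin^2\phi_t+\cos^2\phi_t=1$ to eliminate the leading oscillation, yielding $K_{\Ai}(-t,-t) = \pi^{-1}\sqrt{t} + O(t^{-1})$ with the remainder oscillatory. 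The GOE-specific correction $\tfrac{1}{2}\Ai(x)\pth{1-\int_x^{\infty}\Ai(y)\,\d{y}}$ is a product of an $O(|x|^{-1/4})$ and an $O(|x|^{-3/4})$ oscillatory factor, hence is also $O(|x|^{-1})$ with oscillation. Altogether $\rho_1^{\GOE}(x) = \pi^{-1}\sqrt{-x}+R(x)$ on $(-\infty,0]$, where $R$ is bounded and $\int_{-s}^{0} R(x)\,\d{x}$ converges as $s\to\infty$: integration by parts in the phase $\phi_t$ (which satisfies $\d{\phi_t}/\d{t}=\sqrt{t}$) gains an extra factor of $t^{-1/2}$ and renders the oscillatory $O(t^{-1})$ remainders improperly integrable at $-\infty$.

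Combining these, decompose
\begin{align*}
\int_{-s}^{\infty}\rho_1^{\GOE}(x)\,\d{x} = \int_{-s}^{0}\frac{\sqrt{-x}}{\pi}\,\d{x} + \int_{-s}^{0}R(x)\,\d{x} + \int_0^{\infty}\rho_1^{\GOE}(x)\,\d{x}\,;
\end{align*}
the first term equals $\frac{2}{3\pi}s^{3/2}$, and defining $D_1(s)$ as the sum of the remaining two terms yields $\sup_{s>0}|D_1(s)|<\infty$ by the preceding remarks. The main technical obstacle is precisely the asymptotic analysis producing the integrable remainder $R$: one must track higher-order Airy expansions carefully, so that subtracting off $\pi^{-1}\sqrt{-x}$ leaves only oscillatory pieces whose phase has enough growth to produce uniformly bounded primitives via integration by parts. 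These are classical estimates requiring some bookkeeping but no essential conceptual difficulty.
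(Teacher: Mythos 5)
Your approach follows the same skeleton as the paper: write the expectation as $\int_{-s}^\infty \rho_1^{\GOE}(x)\,\d{x}$, use the decomposition $\rho_1^{\GOE}(x)=\rho_1^{\GUE}(x)+\tfrac12\Ai(x)\bigl(1-\int_x^\infty\Ai(y)\,\d{y}\bigr)$ (equivalently, $\rho_1^{\GUE}(x)+\tfrac12\Ai(x)\int_{-\infty}^x\Ai$), and feed in the oscillatory Airy asymptotics at $-\infty$ and the decay at $+\infty$. Where you genuinely diverge from the paper is in how you dispose of the $O(|x|^{-1})$ oscillatory remainder. You treat the GUE diagonal and the GOE correction separately, note that each contributes an $O(|x|^{-1})$ oscillatory piece whose phase derivative grows like $\sqrt{|x|}$, and then invoke integration by parts (a van der Corput--type step) to make the improper integral converge. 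That works. The paper instead observes that these two $O(|x|^{-1})$ oscillatory pieces exactly cancel each other after substitution --- the product-to-sum identity for the two cosines in $\Ai(x)\int_{-\infty}^x\Ai$ produces $\tfrac12\cos\bigl(\tfrac{4}{3}(-x)^{3/2}\bigr)/(4\pi(-x))$, which is the negative of the subleading oscillatory term in $\rho_1^{\GUE}$ --- so the remainder is in fact $O((-x)^{-5/2})$ and absolutely integrable, and no oscillatory integral argument is needed. The paper's route is cleaner; yours buys nothing extra here but is not wrong, and it is in a sense more robust since it would still work even if the phases didn't miraculously align.

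Two small points of rigor you should tighten if you write this up in full. First, the statement "$R$ is bounded on $(-\infty,0]$" clashes with the asymptotic you then quote, since $O(|x|^{-1})$ blows up as $x\to 0^-$; the Airy asymptotics are only valid for $x$ well into the negative axis, and you need a separate (trivial, by continuity of $\rho_1^{\GOE}$) treatment of a compact neighborhood of $0$, which is exactly why the paper splits into $\int_{-s}^{-1}$, $\int_{-1}^0$, and $\int_0^\infty$. Second, for the integration-by-parts step to be more than heuristic, you must make explicit that the oscillatory piece of $R$ has the form $g(x)\cos\phi(x)$ with $g,g'$ controlled ($g=O(t^{-1})$, $g'=O(t^{-2})$) and $\phi'\sim \sqrt{t}$, $\phi''\sim t^{-1/2}$, so that after one integration by parts the boundary terms are $O(t^{-3/2})$ and the new integrand is $O(t^{-5/2})$, absolutely integrable. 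This is routine and consistent with what you write, but is part of the "bookkeeping" you wave at, and the paper's cancellation sidesteps it entirely.
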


We expect that this result and other statistics for $\chi^{\GOE}$ should be known; however, we were unable to find such results in the literature.
Note that the leading-order term $s^{3/2}$ of \eqref{1.3_exp} matches the leading-order term of the expectation of the GUE (or, Airy) point process $\chi^{\rm{Ai}}$ on $\fk{B}_1(s)$, computed in \cite{Sos00}. 
\cite{Sos00} also computes the variance of and establishes a central limit theorem for $\chi^{\rm{Ai}}$.

In light of Theorem~\ref{1.3_anal_prop}, we are interested in deviations of order $s^{3/2}$ of $\chi^{\GOE}$ on intervals of size $s$. 
The upper deviations result (Theorem~\ref{1.5_anal}, proved in Section~\ref{section.thm.1.5.}) will actually follow from the results discussed in \textbf{(1)} and the lower deviations result (Theorem~\ref{1.4.final}, proved in Section~\ref{section.thm.1.4.proof}), and so we now turn our attention to the lower deviations.
To introduce important related objects and motivate the results that follow, we begin with a preliminary computation of the lower deviations of $\chi^{\GOE}$.
Recall from Theorem~\ref{1.3_anal_prop} the interval $\fk{B}_1(s)$. For any $s\in \R$ and $v>0$, define
\[
    F_1(s,v):= \E \brak{ 
    \exp \pth{
    -v \chi^{\GOE}\pth{
    \fk{B}_1(s)
    }
    }
    }\,.
\]
$F_1(s,v)$ is the \textit{cumulant generating function} for $\chi^{\GOE}$.
Now, for any positive $c, v$ and $s$, taking $f(x) = e^{-v x}$ in Markov's inequality and then applying Theorem~\ref{1.3_anal_prop} yields
\begin{align}
    &\P \pth{\chi^{\GOE}(\fk{B}_1(s)) - \E[ \chi^{\GOE}(\fk{B}_1(s))] \leq -cs^{3/2}}
    \nonumber \\
    &\leq \exp \pth{- c v s^{3/2} + v \E \brak{\chi^{\GOE}(\fk{B}_s)}}
    F_1(-s,v)\,,
    \nonumber \\
    &= \exp \pth{ 
    \Big( \frac{2}{3\pi}-c \Big) v s^{3/2}  
    + v D_1(s)} F_1(-s, v)\,,
    \label{eqn:lowerdev-markov-1}
\end{align}
Thus, we see that in order to achieve decay in \eqref{eqn:lowerdev-markov-1} for any $c>0$, one needs to achieve an upper-bound like\footnote{Here, we use ``little-Oh" notation: $f(s)$ is called  $o(1)$ if $\lim_{s\to \infty} f(s) = 0$.} 
\begin{align}
    F_1(-s,v) \leq \exp\pth{ - \frac{2}{3\pi} vs^{3/2}(1+ o(1))}\,,
    \label{eqn:F1-goal}
\end{align}
for some choice of $v$.
Obtaining \eqref{eqn:F1-goal} for optimal $v$ will be a major technical focus of this article. An important step towards this end is Theorem~\ref{1.7.rewrite.F_1.corollary} below. Before giving this result, we must first uncover a connection to the \emph{thinned GOE/GUE point processes}
with parameter $\gamma := \gamma(v) = 1-e^{-v}$ and  the \textit{Ablowitz-Segur solution} to the \textit{Painlev\'e II equation} (this connection is developed further in Section~\ref{section.thm.1.7}).

The Ablowitz-Segur (AS) solution $u_{\rm{AS}}(\cdot ,\gamma)$ to the Painlev\'e II equation is a one parameter family of solutions to 
\[
u''_{\rm{AS}} = x u_{\rm{AS}} + 2u^3_{\rm{AS}}
\]
with the boundary condition
\begin{align}
u_{\AS}(x; \gamma) = 
\sqrt{\gamma} \frac{x^{-1/4}}{2\sqrt{\pi}} e^{-\frac{2}{3}x^{3/2}} \pth{1+ o(1)}
\label{eqn:uas-bdry}
\end{align}
as $x \to \infty$. When $\gamma =1$, $u_{\rm{AS}}$ is called the Hastings-McLeod solution and typically denoted $u_{\rm{HM}}$. This particular solution was introduced in \cite{HM80}, where they solved the connection problem, that is, gave an asymptotic formula for $u_{\rm{HM}}(x)$ as $x \to -\infty$. 
For $\gamma  \in (0,1)$ fixed, the connection problem for $u_{\rm{AS}}$ was partially solved by \cite{AS77a, AS77b}.

The thinned version of a point process with parameter $\gamma$ removes each particle independently with probability $1-\gamma$; we discuss the thinned GOE point process formally in Section~\ref{subsection.thinned.goe.painleve}.
In Theorem~\ref{1.7.rewrite.F_1}, we prove by way of a 
\textit{Fredholm Pfaffian} 
formula (defined in Section \ref{subsection.fredholm.pfaffian}) 
that 
\[
F_1(s,v) = \mc{F}_1(s,v)\,, \text{ for all $s \in \R$ and $v\geq 0$}\,,
\]
where $\mc{F}_1(s,v)$ denotes the distribution function of the largest particle of the thinned GOE point process with parameter $\gamma(v)$.
Let $\mc{F}_2(s,v)$ denote the distribution function of the largest particle of the thinned GUE point process with parameter $\gamma(v)$. 
In Proposition \ref{1.7.prop.bb17}, we recall a formula from \cite{BB17} that relates $\mc{F}_1(s,v)$ to $\mc{F}_2(s,2v)$ and $u_{\rm{AS}}$, described in the next subsection.
It is a result of \cite{CG18}, restated here as Proposition~\ref{prop.thm.1.7.cg.18}, that 
\[
\mc{F}_2(s,v) = F_2(s,v) := \E \brak{ 
    \exp \pth{
    -v \chi^{\rm{Ai}}\pth{
    [s, \infty)
    }
    }
    }, \text{ for all $s \in \R$ and $v \geq 0$.}
\]
Combining Proposition~\ref{1.7.prop.bb17}, Proposition~\ref{prop.thm.1.7.cg.18}, and Theorem~\ref{1.7.rewrite.F_1} yields Theorem~\ref{1.7.rewrite.F_1.corollary}, which yields a formula for $F_1(s,v)$ in terms of $F_2(s,v)$ and $u_{\AS}$. Theorem~\ref{1.7.rewrite.F_1.corollary} is proved in Section~\ref{section.1.7.final.proof}.
\begin{theorem}
\label{1.7.rewrite.F_1.corollary}
Fix any $s \in \R$ and $v \geq 0$. Define $\gamma  := \gamma(v) =  1- e^{-v}$ and 
$\gamma_2 :=  \gamma_2(v) = 1- e^{-2v}$; note that $\gamma_2 \in [0,1)$. Then 
\begin{align}
    F_1(s,v) 
    &= \sqrt{F_2(s, 2v)} 
    \sqrt{1 + \dfrac{\cosh \mu(s, \gamma_2) - \sqrt{\gamma_2} \sinh \mu(s, \gamma_2) -1 }{2- \gamma}}
    \label{1.7_F_1}
\end{align}
where 
\[
\mu(s, \gamma_2):= \int_{s}^{\infty} u_{\AS}(x; \gamma_2) ~dx.
\]
\end{theorem}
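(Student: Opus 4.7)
The plan is to directly chain together three preparatory results collected immediately before the statement: Theorem~\ref{1.7.rewrite.F_1}, Proposition~\ref{1.7.prop.bb17}, and Proposition~\ref{prop.thm.1.7.cg.18}. No new analysis is required; the corollary is their composition, with care only for bookkeeping of the two thinning parameters $\gamma(v) = 1-e^{-v}$ and $\gamma_2(v)=1-e^{-2v}$.

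First I would apply Theorem~\ref{1.7.rewrite.F_1} to identify
\[
F_1(s,v) = \E\brak{\exp\pth{-v\,\chi^{\GOE}(\fk{B}_1(s))}} = \mathcal{F}_1(s,v),
\]
the distribution function (with the sign convention used in Section~\ref{subsection.thinned.goe.painleve}) of the largest particle of the $\gamma(v)$-thinned GOE point process. Heuristically this is the standard ``Laplace transform equals gap probability under independent thinning'' identity: conditional on the GOE realization, each point of $\fk{B}_1(s) = [-s,\infty)$ is independently removed with probability $\gamma(v) = 1-e^{-v}$, so averaging yields the probability that $[-s,\infty)$ contains no particle of the thinned process. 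Next I would invoke Proposition~\ref{1.7.prop.bb17} (from \cite{BB17}) to rewrite $\mathcal{F}_1(s,v)$ as $\sqrt{\mathcal{F}_2(s,2v)}$ times the square-root factor displayed in \eqref{1.7_F_1}; the doubling $v\mapsto 2v$ reflects the standard $\beta=1$-vs-$\beta=2$ relation between GOE and GUE at the level of thinned Pfaffian kernels, and the Ablowitz--Segur factor encodes the residual ``odd'' component that distinguishes GOE from GUE. Finally Proposition~\ref{prop.thm.1.7.cg.18} (from \cite{CG18}) replaces $\mathcal{F}_2(s,2v)$ with $F_2(s,2v)$, the GUE analogue of the first step. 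Substituting the three identities in sequence produces \eqref{1.7_F_1}.

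The main obstacle is purely notational: aligning the sign conventions (largest GOE/GUE particle versus counting measure on $[-s,\infty)$) with those of \cite{BB17, CG18}, keeping $\gamma$ and $\gamma_2$ in their correct slots, and verifying that the principal branches of the two square roots are the relevant ones. The outer $\sqrt{F_2(s,2v)}$ is unambiguous since $F_2 \in [0,1]$; for the inner factor, a short AM--GM argument (writing $\cosh\mu, \sinh\mu$ in exponential form) gives $\cosh\mu(s,\gamma_2) - \sqrt{\gamma_2}\sinh\mu(s,\gamma_2) \ge \sqrt{1-\gamma_2}\ge 0$ for $\gamma_2 \in [0,1)$, so the expression under the inner square root is nonnegative and the principal branch applies.
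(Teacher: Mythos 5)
Your proposal is correct and follows the paper's own (one-line) proof exactly: compose Theorem~\ref{1.7.rewrite.F_1} ($F_1=\mc F_1$), Proposition~\ref{1.7.prop.bb17} ($\mc F_1$ in terms of $\mc F_2$ and $\mu$), and Proposition~\ref{prop.thm.1.7.cg.18} ($\mc F_2 = F_2$). Your added check that $\cosh\mu(s,\gamma_2) - \sqrt{\gamma_2}\,\sinh\mu(s,\gamma_2) \ge \sqrt{1-\gamma_2} > 0$, so the inner square root is applied to a nonnegative quantity, is a harmless and correct piece of bookkeeping that the paper leaves implicit.
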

In Corollary~\ref{cor:bb17-fixed}, we give an asymptotic expansion for $F_1(s,v)$ for any \textit{fixed} $v >0$ that satisfies \eqref{eqn:F1-goal}, thus yielding exponential decay on the right-hand of \eqref{eqn:lowerdev-markov-1} with exponent $-s^{3/2}$. This yields equation~\eqref{eqn:1.4.weak} of Theorem~\ref{1.4.final}.
However, the authors of \cite{CG18} found optimum decay of $F_2(s,2v)$ when $v =  \frac{1}{2}s^{\frac{3}{2}-\delta}$. Indeed,  part of \cite[Theorem~1.7]{CG18} (recorded here as Proposition~\ref{prop.thm.1.7.cg.18}) states that, for any $\delta \in (0,2/5)$, as $s \to \infty$,
\begin{align}
    F_2(-s, 2 \bar{v}) \leq \exp \Big( - \frac{2}{3\pi} s^{3-\delta} + \mc{O}(s^{3- \frac{13\delta}{11}}) \Big) \,.
    \label{eqn:cg18-F2bd}
\end{align}

Fix $\delta \in (0,2/5)$. Throughout this paper, we fix 
\begin{align}
    \bar{v} := \bar{v}(s, \delta) = \frac{1}{2}s^{\frac{3}{2}-\delta} \quad \text{ and} \quad 
    \bar{\gamma} := \gamma_2(\bar{v}) = 1- \exp(-s^{\frac{3}{2}-\delta}) \,. 
    \label{def:bar-v-gamma}
\end{align}
Now, take $v := \bar{v}$ in the notation of Theorem~\ref{1.7.rewrite.F_1.corollary}. Then upon substituting \eqref{eqn:cg18-F2bd} into Theorem~\ref{1.7.rewrite.F_1.corollary}, we see that obtaining the bound
\begin{align}
\exp (\abs{\mu(-s, \bar{\gamma})} = \exp(o(s^{3-\delta}))
    \label{eqn:desire-mu-AS}
\end{align}
would actually yield \eqref{eqn:F1-goal} with $v = \bar{v}$ there. The result would be exponential decay on the right-hand side  \eqref{eqn:lowerdev-markov-1} with exponent $-s^{3-\delta}$ instead of $-s^{3/2}$. 
Thus, \textit{showing \eqref{eqn:desire-mu-AS} translates directly into a vastly improved bound on the right-hand of \eqref{eqn:lowerdev-markov-1}.}

To achieve \eqref{eqn:desire-mu-AS}, one  needs to control  $u_{\AS}(x;\bar{\gamma})$ for all $x \in [-s, \infty)$ and $s \to \infty$.
While much is known about both $u_{\AS}(x; \gamma)$ and $\mu(s; \gamma)$ for values of $\gamma$ fixed (with respect to $x$), much less is understood for general values of $\gamma$.
As we show in the following subsection, 
there is a particular region of $x$, known as the \textit{Stokes region}, on which  leading-order asymptotics of $u_{\AS}(x; \bar{\gamma})$ do not exist at this time. This lack of knowledge prevents us from bounding in absolute value the integral of $u_{\AS}(x; \bar{\gamma})$ on the Stokes region, and therefore, we can not establish \eqref{eqn:desire-mu-AS}; however, we  show that given crude leading-order asymptotics on $u_{\AS}$ in the Stokes region (see Conjecture~\ref{conjecture}), we can obtain \eqref{eqn:desire-mu-AS}.

\subsection{Asymptotics of the Ablowitz-Segur solution to the Painlev\'e II equation}
\label{subsec:uas}
In this subsection, we recall what is known and unknown about the  asymptotic properties  of the Ablowitz-Segur solution to the Painlev\'e II equation  as both $x$ and $\gamma$ vary and detail what these results imply for $F_1(s,v)$.

As explained in the last paragraph of the previous subsection, we are interested in $u_{\AS}(x; \bar{\gamma})$ over $x \in [-s,\infty)$, where  $\bar{\gamma} := 1- \exp(-s^{\frac{3}{2}-\delta})$, for any $\delta \in (0,2/5)$.
Our goal is to show \eqref{eqn:desire-mu-AS}, for which we seek appropriate leading-order asymptotics of $u_{\AS}(x;\bar{\gamma})$ as $x \to -\infty$. To understand  $u_{\AS}(x;\gamma)$ for $\gamma$ that may vary with $x$, we turn to the important work of Bothner \cite{Bot17}, which contains the most up-to-date results on such  asymptotics in the case $x \to -\infty$ and $\abs{\gamma} \uparrow 1$ (\textit{regular transition} in \cite{Bot17}) or the case $x \to -\infty$ and $\abs{\gamma} \downarrow 1$ (\textit{singular transition} in \cite{Bot17}).
These results were achieved via a non-linear steepest descent analysis applied to a certain Riemann-Hilbert problem. 
Since $s \to \infty$,  we are interested in the regular transition results of \cite{Bot17}.
To state these results, we define the following parameter for any $x \in \R$ and $\gamma \in [0,1)$:
\begin{align}
    \aleph := \aleph(x, \gamma) = \frac{-1}{(-x)^{3/2}}\log(1-\gamma) \,.
    \label{def:aleph}
\end{align}
Note that the  exponential decay in \eqref{eqn:uas-bdry} implies that for any constant $x_0 >0$, the integral of $u_{\AS}(x; \bar{\gamma})$ over $[-x_0 ,\infty)$ is bounded.
The remaining region $x \in [-s, -x_0)$ is contained in  $\aleph \in (0, \infty)$.
For any  $\zeta \in (0, \frac{2\sqrt{2}}{3})$, Theorems~$1.10$ and~$1.12$ of \cite{Bot17} achieve asymptotic expressions for $u_{\AS}(x;\gamma)$ as $x \to -\infty$ in the regions 
$\aleph \in I_1(\zeta) := \big(0, \frac{2\sqrt{2}}{3}- \zeta \big]$
and 
$\aleph \in I_2 := \big[ \frac{2\sqrt{2}}{3} , \infty \big)$, 
respectively.\footnote{Actually, the expression holds for any fixed $f \in \R$ and $I_2(f) := \big[ \frac{2\sqrt{2}}{3} - \frac{f}{(-x)^{3/2}}, \infty \big)$. However, considering $f$ large (but fixed) does not change our results asymptotically, and so we simply take $f= 0$.}
\cite[Theorem~1.12]{Bot17} is transcribed here as Proposition~\ref{prop:Bot17-thm1.12}. \cite[Theorem~1.10]{Bot17} gives an expression in terms of Jacobi theta functions and elliptic integrals that is pseudoperiodic. In Lemma~\ref{lem:mu:I21}, we manipulate this result to show that there exists
$\zeta_0 \in (0, \frac{2\sqrt{2}}{3})$ such that 
$u_{\AS}(x ; \bar{\gamma}) = \mc{O}((-x)^{1/2})$  uniformly over $\aleph \in \big(0, \frac{2\sqrt{2}}{3}- \zeta_0 \big]$ as $x \to -\infty$. From Lemma~\ref{lem:mu:I21} and Proposition~\ref{prop:Bot17-thm1.12},   it follows almost immediately that
\begin{align}
\int_{\aleph \in I_1(\zeta_0) \cup I_2} \abs{u_{\AS}(x; \bar{\gamma})} ~dx = \mc{O}(s^{3/2})\,.
\label{eqn:intro-nonstokes}
\end{align}
In \cite{Bot17}, $I_1(\zeta)$ is named the \textit{regular Boutroux region} and $I_2$ the \textit{Hastings-McLeod region}; the remaining region of $\aleph>0$ was named the \textit{Stokes region}.
\begin{definition}[Stokes region]
\label{def:stokes}
For any $\zeta \in (0,\frac{2\sqrt{2}}{3})$, the region $\aleph \in ( \frac{2\sqrt{2}}{3}- \zeta, \frac{2\sqrt{2}}{3})$ is referred to as the \textbf{Stokes region}. 
\end{definition}
\cite{Bot17} does not give a full asymptotic expression for $u_{\AS}(x; \gamma)$ in the Stokes region, stating that ``the nonlinear steepest descent analysis becomes increasingly difficult."
Moreover, at the time of this paper's release, it appears that no progress has been made towards such results in the Stokes region \cite{Bot21}. As a result, not enough is currently known about $u_{\AS}$ in the Stokes region to obtain \eqref{eqn:desire-mu-AS}, and thus we can not at present achieve \eqref{eqn:F1-goal} with $v = \frac{1}{2}s^{\frac{3}{2}-\delta}$ for any $\delta \in (0,2/5)$.

However, \textit{observe that only a crude upper-bound on $u_{\AS}(x; \bar{\gamma})$ is needed in order to show} \eqref{eqn:desire-mu-AS}. Indeed,  for $\bar{\aleph} := \aleph(x, \bar{\gamma})$, the part of the Stokes region that we are interested in is $( \frac{2\sqrt{2}}{3} - \zeta_0, \frac{2\sqrt{2}}{3})$, which is equivalent to 
\begin{align}
x \in \mathbf{I}_0 := \mathbf{I}_0(s,\delta) = \big( \! \!- \!(\tfrac{2\sqrt{2}}{3}- \zeta_0 )^{-2/3} s^{1- \frac{2}{3}\delta}, -( \tfrac{2\sqrt{2}}{3})^{-2/3} s^{1-\frac{2}{3}\delta} \big )\,.
\label{eqn:stokes-x-interval}
\end{align}
Note that $\mathbf{I}_0$ has length $C s^{1- \frac{2}{3}\delta}$, where $C$ denotes some constant.
\begin{conjecture}\label{conjecture}
Fix $\delta \in (0,2/5)$. Recall $\bar{\gamma} := \bar{\gamma}(s,\delta)$ from~\eqref{def:bar-v-gamma}, and recall $\mathbf{I}_0 := \mathbf{I}_0(s,\delta)$ from~\eqref{eqn:stokes-x-interval}. As $s\to\infty$, we have the following uniformly over all $x \in \mathbf{I}_0$ (equivalently, $\bar{\aleph} := \aleph(x, \bar{\gamma}) \in ( \frac{2\sqrt{2}}{3} - \zeta_0, \frac{2\sqrt{2}}{3}))$:
\begin{align}
    \abs{u_{\AS}(x; \bar{\gamma})} = o(s^{2 - \frac{\delta}{3}}) \,.
    \label{eqn:conjecture}
\end{align}
\end{conjecture}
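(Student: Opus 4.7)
The plan is to attack the conjecture through the Riemann-Hilbert (RH) formalism that undergirds Bothner's analysis \cite{Bot17}, with a parallel softer approach via the Painlev\'e II ODE directly. My target is an \emph{upper bound} of the form $\abs{u_{\AS}(x;\bar{\gamma})} = \mc{O}((-x)^{1/2})$ uniform over $x \in \mathbf{I}_x$, which would be vastly stronger than $o(s^{2-\delta/3})$ and consistent with the orders of growth already proved by Bothner in both the regular Boutroux region and the Hastings-McLeod region, which together sandwich the Stokes region.

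First I would recast $u_{\AS}(\cdot;\bar{\gamma})$ through its standard RH representation attached to the Flaschka-Newell Lax pair, with Stokes multipliers determined by $\bar{\gamma}$. Following the nonlinear steepest descent scheme of Deift-Zhou as carried out in \cite{Bot17}, one conjugates by the $g$-function attached to the genus-one Riemann surface $\ms{R}_{\bar{\aleph}}$ associated to $\bar{\aleph} = \aleph(x,\bar{\gamma})$. The essential obstruction in the Stokes region is the coalescence of two real branch points of $\ms{R}_{\bar{\aleph}}$ as $\bar{\aleph} \uparrow \tfrac{2\sqrt{2}}{3}$, which causes the outer parametrix built from Jacobi theta functions to degenerate and the jump matrices on the lens contours to lose uniform exponential decay in a shrinking neighborhood of the merging points. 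My strategy would be to install a local parametrix whose spatial scale is slaved to the separation of the coalescing branch points; the natural model problem is a tritronqu\'ee-type Painlev\'e function (either Painlev\'e I, as appears in the study of critical behavior of oscillating solutions, or a rescaled copy of Painlev\'e II itself) matched on one side to the Boutroux outer parametrix and on the other to the Hastings-McLeod parametrix. A small-norm argument on the residual RH problem would then produce $u_{\AS}$ as an outer contribution of order $(-x)^{1/2}$ plus a subordinate correction.

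As a parallel and softer route, I would attempt to use the pseudo-energy $E(x) = \tfrac{1}{2}(u')^2 - \tfrac{1}{2} x u^2 - \tfrac{1}{2} u^4$, which along a solution of $u'' = xu + 2u^3$ satisfies $E'(x) = -\tfrac{1}{2} u(x)^2 \leq 0$. This monotonicity allows one to propagate bounds on $(u,u')$ from the right endpoint of $\mathbf{I}_x$ (entering from the Hastings-McLeod side, where Proposition~\ref{prop:Bot17-thm1.12} provides explicit asymptotics) or from the left endpoint (entering from the Boutroux side via a refinement of Lemma~\ref{lem:mu:I21}) across the entire Stokes region. Combined with the inequality $\abs{x} u^2 \leq 2 E + u^4$ and a bootstrap argument controlling the quartic term, this should give $\abs{u_{\AS}(x;\bar{\gamma})} = \mc{O}((-x)^{1/2})$, which is more than sufficient.

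The hard part will be the construction and matching of the local parametrix at the coalescing branch points: this is precisely the regime in which \cite{Bot17} writes that ``the nonlinear steepest descent analysis becomes increasingly difficult,'' and to my knowledge no suitable local parametrix has been constructed in the literature for this double-scaling transition between the Boutroux and Hastings-McLeod regimes. The ODE/energy route is in principle cleaner but requires careful tracking of the rapidly oscillating zeros of $u_{\AS}$ across $\mathbf{I}_x$ and a quantitatively sharp handle on $u_{\AS}'$ at the entry to the Stokes region, which itself will demand pushing Lemma~\ref{lem:mu:I21} and Proposition~\ref{prop:Bot17-thm1.12} right up to the boundary of their current validity.
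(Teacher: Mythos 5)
The statement you are attempting is labeled Conjecture~\ref{conjecture} in the paper precisely because no proof is known: the paper explicitly records that \cite{Bot17} does not provide asymptotics for $u_{\AS}$ in the Stokes region, that ``it appears that no progress has been made towards such results in the Stokes region \cite{Bot21},'' and that ``we do not attempt to provide further justification for Conjecture~\ref{conjecture}.'' Your Riemann--Hilbert route collapses onto exactly this open obstruction: you acknowledge yourself that no local parametrix at the coalescing branch points of the genus-one surface has been constructed for the Boutroux-to-Hastings--McLeod transition, and without that parametrix there is no small-norm problem to solve. That is a restatement of the open problem, not a proof sketch.

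Your softer ODE/energy route also has a concrete gap. The computation $E'(x) = -\tfrac{1}{2}u^2 \leq 0$ is correct, so $E$ is non-increasing in $x$ and you must seed an upper bound at the \emph{left} (Boutroux) endpoint $x_L$ of $\mathbf{I}_x$ and propagate rightward. But Lemma~\ref{lem:mu:I21} controls only $\abs{u_{\AS}(x_L)}$, not $\abs{u_{\AS}'(x_L)}$; since $E = \tfrac{1}{2}(u')^2 - \tfrac{1}{2}xu^2 - \tfrac{1}{2}u^4$, you cannot bound $E(x_L)$ without an independent and uniform-in-$\bar{\gamma}$ estimate on $u'$, which is not established anywhere in the paper or in \cite{Bot17} (and you must apply Lemma~\ref{lem:mu:I21} at the very edge $\bar{\aleph} = \tfrac{2\sqrt{2}}{3} - \zeta_0$ of its validity). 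Even granting $E(x_L) = \mc{O}(x_L^2)$, the inequality $\abs{x}u^2 \leq 2E + u^4$ does not close by itself: in the Hastings--McLeod limit $u \sim -\sqrt{-x/2}$, so $u^4 \sim \tfrac{1}{4}x^2$ is of the same order as $\abs{x}u^2 \sim \tfrac{1}{2}x^2$, the factored quantity $u^2(\abs{x} - u^2)$ is degenerate, and the ``bootstrap'' you invoke to exclude the branch $u^2 \geq \abs{x}/2$ has no quantitative slack and is left unconstructed. Both routes therefore remain genuinely incomplete, consistent with the paper's own assessment that the conjecture is open.
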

Assuming Conjecture~\ref{conjecture}, we immediately have 
\begin{align}
    \int_{\mathbf{I}_0} \abs{u_{\AS}(x; \bar{\gamma})} ~dx  = o(s^{3-\delta})\,,
\end{align}
so that~\eqref{eqn:desire-mu-AS} follows from~\eqref{eqn:intro-nonstokes} and the last display. To be precise, we have the following results.

\begin{lemma}\label{1.7.mu_est}
Fix $\delta \in (0,2/5)$. Recall the function $\mu$ from Theorem~\ref{1.7.rewrite.F_1.corollary}.
There exist  positive constants $\mc{C} := \mc{C}(\delta)$ and $S_0 := S_0(\delta)$ such that for all $s\geq S_0$, 
\begin{align}
    \abs{\mu(-s, \bar{\gamma})} \leq  \mc{C} s^{3/2} + \abs{ \int_{\mathbf{I}_0} u_{\AS}(x;\bar{\gamma})~dx }\,.
    \label{1.7.mu_est.eqn.weak}
\end{align}
Assuming Conjecture~\ref{conjecture},
we  have the following expression as $s \to \infty$,
\begin{align}
    \abs{\mu(-s, \bar{\gamma})} = o(s^{3-\delta}) \,.
    \label{1.7.mu_est.eqn}
\end{align}
\end{lemma}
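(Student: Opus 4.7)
The plan is to split the integral $\mu(-s, \bar{\gamma}) = \int_{-s}^{\infty} u_{\AS}(x; \bar{\gamma})\, dx$ into three pieces according to whether $x$ lies in (a) a boundary region $[-x_0, \infty)$ for some fixed $x_0 > 0$; (b) the portion of $[-s, -x_0)$ on which $\bar{\aleph} := \aleph(x, \bar{\gamma})$ lies in the regular Boutroux or Hastings-McLeod region $I_1(\zeta_0) \cup I_2$; or (c) the Stokes interval $\mathbf{I}_x$ from \eqref{eqn:stokes-x-interval}. For $s$ sufficiently large, the endpoints of $\mathbf{I}_x$ are of order $s^{1-2\delta/3}$, which exceeds $x_0$ and is less than $s$ (since $\delta \in (0,2/5)$ implies $0 < 1-2\delta/3 < 1$), so these three regions partition $[-s, \infty)$.

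To establish the unconditional bound \eqref{1.7.mu_est.eqn.weak}, I apply the triangle inequality to the three resulting integrals. On region (a), the boundary condition \eqref{eqn:uas-bdry} together with $\bar{\gamma} \leq 1$ (as already noted in the discussion following \eqref{eqn:uas-bdry}) gives $\int_{-x_0}^{\infty} \abs{u_{\AS}(x; \bar{\gamma})}\, dx = \mc{O}(1)$ uniformly in $s$. On region (b), I invoke \eqref{eqn:intro-nonstokes}, which is proved from Lemma~\ref{lem:mu:I21} and Proposition~\ref{prop:Bot17-thm1.12} and directly yields $\int \abs{u_{\AS}(x; \bar{\gamma})}\, dx = \mc{O}(s^{3/2})$ on that region. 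Summing these two contributions gives a $\mc{O}(s^{3/2})$ bound, and the Stokes-region integral is retained as the second term of \eqref{1.7.mu_est.eqn.weak}, for an appropriate constant $\mc{C}$ and threshold $S_0$.

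To deduce \eqref{1.7.mu_est.eqn} under Conjecture~\ref{conjecture}, I bound the Stokes-region integral pointwise by
\[
\abs{\int_{\mathbf{I}_x} u_{\AS}(x; \bar{\gamma})\, dx} \leq \abs{\mathbf{I}_x} \cdot \sup_{x \in \mathbf{I}_x} \abs{u_{\AS}(x; \bar{\gamma})}.
\]
Conjecture~\ref{conjecture} provides $\sup_{x \in \mathbf{I}_x} \abs{u_{\AS}(x; \bar{\gamma})} = o(s^{2-\delta/3})$, while $\abs{\mathbf{I}_x} = \mc{O}(s^{1-2\delta/3})$, so the Stokes contribution is $o(s^{3-\delta})$. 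Since $3/2 < 3-\delta$ for $\delta \in (0, 2/5)$, the $\mc{C} s^{3/2}$ term from the first part of the lemma is also $o(s^{3-\delta})$, and \eqref{1.7.mu_est.eqn} follows by combining the two estimates.

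The main difficulty here is not technical: the argument is essentially a bookkeeping of the decomposition already set up in Section~\ref{subsec:uas}, combined with the quantitative statement of Conjecture~\ref{conjecture}. The genuine obstruction — carrying out a nonlinear steepest descent analysis of the Riemann-Hilbert problem for $u_{\AS}(x; \bar{\gamma})$ across the Stokes region — is exactly what is encapsulated in, and deferred to, Conjecture~\ref{conjecture}.
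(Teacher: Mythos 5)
Your proof is correct and takes essentially the same approach as the paper's: the paper likewise partitions $[-s,\infty)$ into the regular Boutroux, Stokes, Hastings--McLeod, and boundary regions and applies the bounds \eqref{eqn:mu:1}, \eqref{eqn:mu:2stokes}, \eqref{eqn:mu:3}, \eqref{eqn:mu:x0}. Your only change is to merge the Boutroux and Hastings--McLeod contributions via \eqref{eqn:intro-nonstokes}, which is a cosmetic reorganization of the paper's separate bounds.
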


Lemma~\ref{1.7.mu_est} is proved  in Section~\ref{proof_of_mu_est}.
Combining this result with  Theorem~\ref{1.7.rewrite.F_1.corollary} and \eqref{eqn:cg18-F2bd} will yield the following bound.
\begin{theorem} \label{1.7.final}
Assume Conjecture~\ref{conjecture}.
For $\delta \in (0,2/5)$, we have the following expression as $s \to \infty$
\begin{align}
    F_1 \pth{ -s, \frac{1}{2}s^{\frac{3}{2}-\delta} }
    &\leq 
     \exp \pth{
    -\frac{1}{3 \pi}s^{3-\delta}(1+o(1))
    } \,. \label{1.7.final.eqn}
\end{align}
\end{theorem}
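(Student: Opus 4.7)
The plan is to combine Theorem~\ref{1.7.rewrite.F_1.corollary}, the bound~\eqref{eqn:cg18-F2bd} of Corbis--Ghosal on $F_2(-s,2\bar v)$, and the $\mu$--estimate of Lemma~\ref{1.7.mu_est} (which in turn invokes Conjecture~\ref{conjecture}). Substituting $s \mapsto -s$ and $v = \bar v := \tfrac{1}{2}s^{3/2-\delta}$ into the identity of Theorem~\ref{1.7.rewrite.F_1.corollary}, one has $\gamma_2(\bar v) = \bar\gamma$ and
\[
F_1(-s,\bar v) \;=\; \sqrt{F_2(-s, 2\bar v)}\,\sqrt{1 + \frac{\cosh \mu(-s,\bar\gamma) - \sqrt{\bar\gamma}\,\sinh\mu(-s,\bar\gamma) - 1}{2-\gamma(\bar v)}}\,,
\]
so the task reduces to separately controlling the two factors on the right.

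First, I would treat the $F_2$ factor by direct substitution of~\eqref{eqn:cg18-F2bd}, which gives
\[
\sqrt{F_2(-s,2\bar v)} \;\le\; \exp\!\Bigl(-\tfrac{1}{3\pi}s^{3-\delta} + \mc{O}(s^{3-\frac{13\delta}{11}})\Bigr)\,.
\]
Since $13\delta/11 > \delta$, the error exponent $s^{3-13\delta/11}$ is $o(s^{3-\delta})$, so this factor already contributes $\exp(-\tfrac{1}{3\pi}s^{3-\delta}(1+o(1)))$, which is exactly the target rate of decay.

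Second, I would bound the ``Painlev\'e factor.'' Using $\bar\gamma \in [0,1)$ and the crude estimates $|\cosh\mu - \sqrt{\bar\gamma}\sinh\mu| \le \cosh|\mu| + \sinh|\mu| = e^{|\mu|}$ together with $2-\gamma(\bar v) \ge 1$, the argument under the outer square root is bounded above by $1 + 2e^{|\mu(-s,\bar\gamma)|} \le 3 e^{|\mu(-s,\bar\gamma)|}$, and hence
\[
\sqrt{1 + \frac{\cosh\mu - \sqrt{\bar\gamma}\sinh\mu - 1}{2-\gamma(\bar v)}} \;\le\; \sqrt{3}\;\exp\bigl(\tfrac{1}{2}|\mu(-s,\bar\gamma)|\bigr)\,.
\]
By Lemma~\ref{1.7.mu_est} (which is where the assumption of Conjecture~\ref{conjecture} enters), $|\mu(-s,\bar\gamma)| = o(s^{3-\delta})$, so this factor is $\exp(o(s^{3-\delta}))$ and is absorbed into the $(1+o(1))$ factor in the target exponent. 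Multiplying the two estimates yields~\eqref{1.7.final.eqn}.

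The entire analytic difficulty of the bound is packaged into Lemma~\ref{1.7.mu_est}: everything else is routine substitution and bookkeeping of subleading error exponents. In particular, the main obstacle will \emph{not} arise here but rather in the proof of Lemma~\ref{1.7.mu_est}, namely in controlling the integral of $u_{\AS}(x;\bar\gamma)$ over the Stokes region $\mathbf{I}_x$; for the purposes of this theorem, that difficulty is assumed away via Conjecture~\ref{conjecture}. The only care to be taken in the present proof is to verify that the CG18 error term $s^{3-13\delta/11}$ and the Painlev\'e error term $|\mu|$ are both of smaller order than the leading exponent $s^{3-\delta}$ throughout the range $\delta \in (0,2/5)$, which is a direct numerical check.
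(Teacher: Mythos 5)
Your proposal is correct and follows essentially the same route as the paper's proof: substitute $v=\bar v$ into Theorem~\ref{1.7.rewrite.F_1.corollary}, bound the $F_2$ factor via \eqref{eqn:cg18-F2bd}, and crudely dominate the hyperbolic factor by $\exp(\tfrac12|\mu|)$ before invoking Lemma~\ref{1.7.mu_est}. Your handling of the Painlev\'e factor is in fact stated a bit more carefully than \eqref{1.7.muterm}, which conflates the factor with its logarithm, but the substance is identical.
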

Theorem~\ref{1.7.final} is proved in Section~\ref{section.1.7.final.proof}.

Regarding evidence for the validity of Conjecture~\ref{conjecture},
we note that a leading-order expression for~$u_{\AS}(x; \bar{\gamma})$ was obtained in \cite[Theorem~1.13]{Bot17} for the portion of the Stokes region satisfying
\[ \aleph \geq \tfrac{2\sqrt{2}}{3} - f_3 \tfrac{\log((-x)^{3/2}}{(-x)^{3/2}} \,, \]
for any $f_3 < 7/6$. The expression shows that $u_{\AS}(x; \bar{\gamma}) = \mc O(x^{1/2})$ uniformly over this region of $\aleph$, which is consistent with 
Conjecture~\ref{conjecture}.
We note further that the bound  in~\eqref{eqn:conjecture} is much looser than both the aforementioned result and the existing leading-order asymptotics given for $u_{\AS}(x; \bar{\gamma})$ outside of the Stokes region (Proposition~\ref{prop:Bot17-thm1.12} and Lemma~\ref{lem:mu:I21}). Beyond these observations, we do not attempt to provide further justification for Conjecture~\ref{conjecture}.

\subsection{Main results on the GOE Point Process}
\label{subsec:mainresults}
Theorems \ref{1.4.final} and \ref{1.5_anal} establish the first bounds on the fluctuations of $\chi^{\GOE}$ below and above its mean, respectively, and may be of independent interest.

\begin{theorem}\label{1.4.final}
Fix any $\eta >0$, $c > 0$, and $\delta \in (0,2/5)$. There exists a positive constant $S_0 := S_0(\eta,c)$ such that for all $s \geq S_0$, 
\begin{align}
    \P \pth{\chi^{\GOE}[-s,\infty) - \E[ \chi^{\GOE}([-s,\infty))] \leq -cs^{3/2}} \leq 
    \exp \pth{ -\eta s^{3/2} }\,.
    \label{eqn:1.4.weak}
\end{align}
Furthermore, assuming Conjecture~\ref{conjecture}, there exist positive constants $S_0 := S_0(\delta)$ and $K := K(\delta)$ such that for all $s \geq S_0$ and $c > 0$,
\begin{align}
    \P \pth{\chi^{\GOE}(\fk{B}_1(s)) - \E[ \chi^{\GOE}(\fk{B}_1(s))] \leq -cs^{3/2}} &\leq 
    \exp{
    \pth{
    -\frac{1}{2}cs^{3-\delta}(1+ o(1)) 
    }
    } \, , \label{1.4.final.eq}
\end{align}
where  $\fk{B}_1(s) := [-s, \infty)$.
\end{theorem}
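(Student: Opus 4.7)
The plan is to execute the exponential Markov argument sketched in the discussion preceding equation~\eqref{eqn:lowerdev-markov-1}. For any $v > 0$, applying Markov's inequality to $e^{-v \chi^{\GOE}(\fk{B}_1(s))}$ and invoking Theorem~\ref{1.3_anal_prop} yields
\begin{align*}
\P\pth{ \chi^{\GOE}(\fk{B}_1(s)) - \E[ \chi^{\GOE}(\fk{B}_1(s))] \leq -c s^{3/2}}
\leq \exp\pth{ \pth{ \tfrac{2}{3\pi} - c} v s^{3/2} + v D_1(s)} F_1(-s, v)\,,
\end{align*}
so the task reduces to picking $v$ to drive the right-hand side to zero, given an appropriate bound on $F_1(-s, v)$. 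The two halves of Theorem~\ref{1.4.final} correspond to two different choices of $v$ paired with two different inputs: Corollary~\ref{cor:bb17-fixed} for the weak bound~\eqref{eqn:1.4.weak}, and Theorem~\ref{1.7.final} for the strong bound~\eqref{1.4.final.eq}.

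For the weak bound, fix $\eta, c > 0$ and pick $v := v_0$, a constant depending only on $\eta$ and $c$, satisfying $c v_0 > 2 \eta$. Corollary~\ref{cor:bb17-fixed} supplies $F_1(-s, v_0) \leq \exp\pth{-\tfrac{2}{3\pi} v_0 s^{3/2}(1 + o(1))}$ as $s \to \infty$. Substituting into the display above and using boundedness of $D_1$ collapses the $\tfrac{2}{3\pi} v_0 s^{3/2}$ contributions, leaving
\begin{align*}
\exp\pth{ -c v_0 s^{3/2} + O(1) + o(s^{3/2})} \leq \exp\pth{-\eta s^{3/2}}
\end{align*}
for all $s$ larger than some $S_0(\eta, c)$, yielding~\eqref{eqn:1.4.weak}.

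For the strong bound, assume Conjecture~\ref{conjecture}, fix $\delta \in (0, 2/5)$, and set $v := \bar{v} = \tfrac{1}{2} s^{3/2 - \delta}$. Theorem~\ref{1.7.final} supplies $F_1(-s, \bar{v}) \leq \exp\pth{-\tfrac{1}{3\pi} s^{3-\delta} (1 + o(1))}$. Substituting, the Markov bound becomes
\begin{align*}
\exp \pth{ \pth{\tfrac{1}{3\pi} - \tfrac{c}{2}} s^{3-\delta} + O(s^{3/2 - \delta})} \cdot \exp\pth{-\tfrac{1}{3\pi} s^{3-\delta} (1 + o(1))} = \exp\pth{-\tfrac{c}{2} s^{3-\delta} (1 + o(1))}\,,
\end{align*}
where the leading $\tfrac{1}{3\pi} s^{3-\delta}$ terms cancel exactly and $O(s^{3/2 - \delta})$ is negligible at scale $s^{3-\delta}$ since $\delta < 3/2$. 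This is precisely~\eqref{1.4.final.eq}.

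The principal obstacle is not the Markov optimization, which is short once $v$ is selected, but the \emph{exact} matching of the prefactor $\tfrac{2}{3\pi}$ in Theorem~\ref{1.3_anal_prop} against the prefactor $\tfrac{1}{3\pi}$ in Theorem~\ref{1.7.final}, so that the dominant $s^{3-\delta}$ contributions cancel rather than leaving a residual of that order. All of the technical work in Sections~\ref{subsec:intro-GOEPP}--\ref{subsec:uas} -- including the passage through Theorem~\ref{1.7.rewrite.F_1.corollary}, the asymptotics of $\mu(-s, \bar{\gamma})$ in Lemma~\ref{1.7.mu_est}, and the reliance on Conjecture~\ref{conjecture} to control $u_{\AS}$ in the Stokes region -- exists precisely to secure this matching; once it is in hand, the deduction of Theorem~\ref{1.4.final} is the short computation above.
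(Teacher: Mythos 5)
Your argument is correct and coincides with the paper's own proof: both apply the exponential Markov inequality to obtain the bound \eqref{1.4_anal_1.7_weakMarkov}, then substitute Corollary~\ref{cor:bb17-fixed} with a constant $\lambda$ (the paper takes $\lambda = 2\eta/c$; your condition $c v_0 > 2\eta$ gives the same conclusion) for \eqref{eqn:1.4.weak}, and Theorem~\ref{1.7.final} with $\lambda = \tfrac12 s^{3/2-\delta}$ for \eqref{1.4.final.eq}. The only cosmetic difference is that the paper's weak-bound residual is $\mathcal{O}(\log s)$ rather than your $O(1)+o(s^{3/2})$, but this does not affect the conclusion.
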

Theorem~\ref{1.4.final} is proved in Section~\ref{section.thm.1.4.proof}, essentially by combining \eqref{eqn:lowerdev-markov-1}, \eqref{1.7.final.eqn}, and~\eqref{eqn:bb17-fixed}.

\begin{theorem}
\label{1.5_anal}
Consider the intervals
\begin{align}
    \fk{B}_1(\ell) &:= [-\ell, \infty) \text{, and } \nonumber \\
    \fk{B}_k(\ell) &:= [-k \ell, -(k-1)\ell) ~\text{for } k \in \Z_{>1} \, . \nonumber
\end{align}
Fix $c> 0$ and $\delta \in (0,2/5)$. There exist 
$L_0 := L_0(c, \delta)$
and 
$\mc{C} := \mc{C}(c, \delta) >0$
such that, for all $\ell \geq L_0$ and for all $k \in \Z_{\geq 1}$, we have 
\begin{align}
    \P\pth{
    \chi^{\GOE}( \fk{B}_k(\ell)) - \E \brak{ \chi^{\GOE}(\fk{B}_k(\ell)) } \geq c\ell^{3/2}
    } 
    \leq \exp \big ( -\mc{C} \ell^{1-\delta} 
    \big ) \, . \label{1.5.anal.thm.eqn}
\end{align}
\end{theorem}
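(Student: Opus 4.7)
The plan is to reduce the count statement to statements about individual GOE points $\alpha_n$, controlled by Theorem~\ref{1.6_anal}, and to handle complementary events via the weak lower-deviation bound of Theorem~\ref{1.4.final}. I would first dispose of the edge case $k=1$. By Theorem~\ref{1.3_anal_prop}, the target event $\{\chi^{\GOE}(\mathfrak{B}_1(\ell))-\E[\chi^{\GOE}(\mathfrak{B}_1(\ell))]\geq c\ell^{3/2}\}$ coincides, up to an $O(1)$ shift in the index coming from $D_1$, with $\{\alpha_N\geq -\ell\}$ where $N:=\lceil \tfrac{2}{3\pi}\ell^{3/2}+c\ell^{3/2}\rceil$. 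Applied at index $N$, Theorem~\ref{1.6_anal} yields $C^{\GOE}_\epsilon \geq (1-\epsilon)\lambda_N-\ell$, and by $\lambda_N\sim(3\pi N/2)^{2/3}$ from Proposition~\ref{airy_eval_bound}, the right-hand side equals $\ell\big[(1-\epsilon)(1+\tfrac{3\pi c}{2})^{2/3}-1\big]+o(\ell)$. Choosing $\epsilon=\epsilon(c)>0$ small enough fixes this as a positive constant multiple of $\ell$, and the stretched-exponential tail on $C^{\GOE}_\epsilon$ in Theorem~\ref{1.6_anal} produces the target bound $\exp(-\mc{C}(c,\delta)\ell^{1-\delta})$.

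For $k\geq 2$, I would decompose $\chi^{\GOE}(\mathfrak{B}_k(\ell))=N_2-N_1$ with $N_2:=\chi^{\GOE}([-k\ell,\infty))$ and $N_1:=\chi^{\GOE}([-(k-1)\ell,\infty))$, and union-bound the upper-deviation event by $A_k\cup B_k$, where $A_k:=\{N_2-\E[N_2]\geq \tfrac{c}{2}\ell^{3/2}\}$ and $B_k:=\{N_1-\E[N_1]\leq -\tfrac{c}{2}\ell^{3/2}\}$. The event $A_k$ is then handled exactly as in the $k=1$ case with $\ell$ replaced by $k\ell$ and $c$ by $c/2$: it implies $\alpha_{N^\star_k}\geq -k\ell$ for a suitable $N^\star_k\approx \tfrac{2}{3\pi}(k\ell)^{3/2}+\tfrac{c}{2}\ell^{3/2}$, and Theorem~\ref{1.6_anal} together with Proposition~\ref{airy_eval_bound} yields the bound after choosing $\epsilon$ suitably small.

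For $B_k$, set $s:=(k-1)\ell$ and note that the threshold $\tfrac{c}{2}\ell^{3/2}$ equals $c's^{3/2}$ with $c'=c/(2(k-1)^{3/2})$. Applying the weak bound~\eqref{eqn:1.4.weak} of Theorem~\ref{1.4.final}, the main observation is that its proof goes through the Markov-type inequality~\eqref{eqn:lowerdev-markov-1}, whose free parameter $v$ may be chosen so that $\eta=c'v/2$; tracing this through yields $\P(B_k)\leq \exp(-\tfrac{cv}{4}\ell^{3/2})$ provided $s\geq S_0(v)$, a threshold that depends on $v$ (and on the decay rate in Corollary~\ref{cor:bb17-fixed}) but not on $k$. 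This bound is strictly stronger than the target and uniform in $k\geq 2$.

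The technical heart of the argument is making the $A_k$-bound uniform in $k$. Because Theorem~\ref{1.6_anal} carries a multiplicative error $\epsilon\lambda_{N^\star_k}\approx \epsilon k\ell$, the lower bound $(1-\epsilon)\lambda_{N^\star_k}-k\ell\approx \tfrac{\pi c\ell}{2\sqrt k}-\epsilon k\ell$ used in the argument becomes negative once $k\gg (\pi c/\epsilon)^{2/3}$, so the argument with fixed $\epsilon$ breaks down in the large-$k$ regime. Handling very large $k$ therefore requires either (i) a refinement of Theorem~\ref{1.6_anal} permitting a $k$-dependent choice $\epsilon=\epsilon_k\asymp k^{-3/2}$, with quantitative control over how $\kappa(\epsilon,\delta)$ and $S_0(\epsilon,\delta)$ degenerate as $\epsilon\to 0$, or (ii) a purely additive bulk-rigidity estimate on $|\alpha_n+\lambda_n|$ for indices $n$ in the relevant range. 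I expect this step to require the most effort, and it would proceed by inspecting the SAO-eigenvalue tail estimate (Proposition~\ref{prop4.5_cg18}) underlying Theorem~\ref{1.6_anal} for the needed quantitative dependence on $\epsilon$.
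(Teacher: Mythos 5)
Your decomposition and choice of lemmas match the paper's own proof in Section~\ref{section.thm.1.5.}: the paper also writes $\chi^{\GOE}(\fk{B}_k(\ell))=m_2-m_1$ (your $N_2-N_1$), splits the surplus event across the two endpoints with a $k$-dependent weight $\fk{c}_k\asymp k^{-1/2}$ so that each endpoint carries an $\asymp\ell^{3/2}$ deviation, and handles the surplus-at-$-k\ell$ event via Theorem~\ref{1.6_anal} (Claim~\ref{claim.1.5.mainterm.bd}) and the deficit-at-$-(k-1)\ell$ event via the weak bound~\eqref{eqn:1.4.weak} (Claim~\ref{claim.1.5.smalltermbd}). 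You have also correctly flagged the central technical difficulty, namely uniformity in $k$ of the surplus estimate.

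Two issues deserve attention. First, your $B_k$ handling has a hidden gap: the $\frac{v^2}{2\pi^2}\log(8s^{3/2})$ term in Corollary~\ref{cor:bb17-fixed}, evaluated at $s=(k-1)\ell$, contributes a $\log k$ factor, and with $v$ fixed the claimed bound $\exp(-\tfrac{cv}{4}\ell^{3/2})$ only survives when $\log k\lesssim \ell^{3/2}$, so the assertion that $S_0(v)$ is $k$-independent is too strong (the paper's Claim~\ref{claim.1.5.smalltermbd}, which reads Theorem~\ref{1.4.final} with ``$s$ to be $\ell$'' while the underlying interval is $[-(k-1)\ell,\infty)$, is exposed to the same objection). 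Second, and more to the point of your own flagged step, the paper's resolution inside Claim~\ref{claim.1.5.mainterm.bd} warrants scrutiny: the asserted bound $\lambda_{k_3}\geq k\ell(1+\fk{c}_k/4)^{2/3}$ of~\eqref{1.5_anal_mainterm_3} reduces, for $\ell$ large, to $1-\big((k-1)/k\big)^{3/2}\geq\tfrac{1}{2}$, which fails for every $k\geq 3$; and the subsequent substitution of a single $\ep_0$ in~\eqref{eqn:claim6.1-barc-replace} is not immediate, since $C_\epsilon^{\GOE}$ is decreasing in $\epsilon$ so the inclusion $\{C_{\epsilon_k}^{\GOE}\geq\bar{c}\ell\}\subseteq\{C_{\epsilon_0}^{\GOE}\geq\bar{c}\ell\}$ requires $\epsilon_0\leq\epsilon_k$ for all $k$, whereas the $\epsilon_k$ solving $\hat{\fk{c}}_k(\epsilon_k)=k^{-1}$ tend to zero. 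Your concern is therefore a genuine one, not merely a gap in your sketch. A practical remark: every invocation of Theorem~\ref{1.5_anal} in this paper (see the use in Claim~\ref{Claim.eqn.5.28}) takes $k\lesssim s^{L-1}$, i.e.\ polynomial in $\ell$, in which range $\log k=O(\log\ell)$ and both pathologies above evaporate. A polynomially-restricted version of the statement covers the application and is substantially easier to justify.
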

Theorem~\ref{1.5_anal} is proved in Section~\ref{section.thm.1.5.}.

\subsection{Outline}
\label{subsec:outline}
We now give an outline for the remainder of the article.
    In Section~\ref{sec:mainthm-proof}, we prove Theorem~\ref{thm.1.1.anal} by  realizing the left-hand side of the Laplace transform formula \eqref{Intro.lpf_usub} as an approximate indicator function for $\P(\Upsilon_T < - s)$.
    This translates our problem into bounding a multiplicative functional of the GOE point process, i.e., the right-hand side of \eqref{Intro.lpf_usub}. These bounds are given by Proposition \ref{prop.4.2.anal}.

    We next turn to a fine analysis of the GOE point process, which involves estimating the typical locations of the GOE points in large intervals and bounding their deviations from these locations. In Section~\ref{section.goe}, we define the GOE point process (and \textit{Pfaffian} point processes in general), and use known results on its correlation functions to prove Theorem~\ref{1.3_anal_prop}. 
    We then
    discuss the important connection with the eigenvalues of the \textit{stochastic Airy operator} (abbreviated SAO). In particular, the result of \cite{RRV11} (Proposition~\ref{rrv11_thm} below) gives an equivalence in distribution between  the GOE points and the negatives of the SAO eigenvalues. Furthermore,  \cite[Proposition 4.5]{CG18} (Proposition \ref{prop4.5_cg18} below) establishes  an upper bound on deviations of the SAO eigenvalues (uniformly over all eigenvalues) from their ``typical locations", which are given by the  eigenvalues of the \textit{Airy operator}. The locations of these deterministic eigenvalues are given by a result of \cite{MT59} (Proposition \ref{airy_eval_bound} below). Combining  Proposition \ref{rrv11_thm} and Proposition \ref{prop4.5_cg18} yields Theorem~\ref{1.6_anal}. Thus, we are able to effectively estimate the locations of individual GOE points.

    In Section~\ref{section.thm.1.7}, we turn our attention to the cumulant generating function $F_1(-s,v)$ for the GOE point process. The importance of this function was established in equation~\ref{eqn:lowerdev-markov-1} of Section~\ref{subsec:intro-GOEPP}.
    Via a Fredholm Pfaffian formula for $F_1(-s,v)$, we prove in Theorem~\ref{1.7.rewrite.F_1} a key equality between $F_1(-s,v)$ and the distribution function of the largest eigenvalue of the \textit{thinned GOE point process}. This allows us to translate the work of \cite{BB17} on this distribution function to $F_1(-s,v)$, which in particular leads to the  proofs of Theorem \ref{1.7.rewrite.F_1.corollary} and  (assuming Lemma~\ref{1.7.mu_est}) Theorem~\ref{1.7.final} in Section~\ref{section.1.7.final.proof}.
    Lemma~\ref{1.7.mu_est} is proved in Section~\ref{proof_of_mu_est}.

    In Sections~\ref{section.thm.1.4.proof} and~\ref{section.thm.1.5.}, we prove Theorems~\ref{1.4.final} and~\ref{1.5_anal} respectively. 
    Theorem~\ref{1.4.final} is proved essentially by substituting the results of Corollary~\ref{cor:bb17-fixed} and Theorem~\ref{1.7.final} into \eqref{eqn:lowerdev-markov-1}.
    Our strategy for proving Theorem~\ref{1.5_anal} involves approximating the number of GOE points in a closed interval of length $s$ by carefully estimating the nearest GOE points to the endpoints of this interval, and then bounding the fluctuations of these GOE points via Theorem~\ref{1.6_anal}.

    In Section~\ref{section.proof.upper.and.lower}, we apply our work on the GOE point process to prove Proposition~\ref{prop.4.2.anal}.

\subsection*{Acknowledgements}
We are grateful to Ivan Corwin for suggesting this problem and for providing helpful comments on numerous drafts of this paper, to Promit Ghosal for providing  guidance and insight at several  stages of this project, and to Thomas Bothner for enlightening discussions about the current literature on leading-order asymptotics for the Ablowitz-Segur solution to the Painlev\'e II equation in various regimes.
We are also grateful to  Guillaume Barraquand, 
Pierre le Doussal, Alexandre Krajenbrink, Yier Lin,
Baruch Meerson,
Li-Cheng Tsai, and
 Shalin Parekh for discussions and conversations related to this work.
 Finally, we are grateful to the anonymous referees for their time and effort in providing many important comments and suggestions.
 The author was partially funded by Ivan Corwin's Packard Fellowship for Science and Engineering while working on this paper. The author is currently supported by the National Science Foundation Graduate Research Fellowship under Grant No.~1839302.

\section{Proof of the main theorem}
\label{sec:mainthm-proof}
We begin by establishing  
upper and lower bounds on the right-hand side of the Laplace transform formula \eqref{Intro.lpf_usub}
in Proposition \ref{prop.3.1.anal}.

\begin{proposition}
\label{prop.3.1.anal}
Fix any $\eta >0$, $\ep \in (0,1/3)$, $\delta \in (0,1/4)$, and $T_0 >0$. There exist positive constants $S_0 := S_0 (\eta, \ep, \delta, T_0)$,  $C:=C(T_0)$, $K_1 := K_1(\ep, \delta, T_0) >0$, and $K_2 := K_2(T_0) > 0$ 
such that for all $s \geq S_0$ and $T \geq T_0$, we have 
\begin{align}
    \E \brak{\exp \pth{- \frac{1}{4}\exp \pth{T^{1/3}(\Upsilon_T + s)}}}   
    &\geq 
    e^{
    -\frac{2(1+C\ep)}{15\pi}T^{1/3} s^{5/2}
    }
    + 
    e^{- K_2 s^3} 
    \label{3.2.eqn.anal}
\end{align}
and
\begin{align}
    \E \brak{\exp \pth{- \frac{1}{4}\exp \pth{T^{1/3}(\Upsilon_T + s)}}}   
    &\leq 
    e^{
    -\frac{2(1- C\ep)}{15 \pi} T^{1/3} s^{5/2}
    }
    +
    e^{ -\frac{\ep}{2} sT^{1/3} -\eta s^{3/2}}
    +e^{-\frac{1-C\ep}{24}s^3}\,.
    \label{3.1.eqn.anal.weak}
\end{align}
Assuming Conjecture~\ref{conjecture}, we have the stronger upper bound
\begin{align}
    \E \brak{\exp \pth{- \frac{1}{4}\exp \pth{T^{1/3}(\Upsilon_T + s)}}}  
    \leq 
    e^{
    -\frac{2(1- C\ep)}{15 \pi} T^{1/3} s^{5/2}
    }
    +
    e^{ -\frac{\ep}{2} sT^{1/3} -K_1 s^{3- \delta}}
    +e^{-\frac{1-C\ep}{24}s^3} \,.
    \label{3.1.eqn.anal}
\end{align}
\end{proposition}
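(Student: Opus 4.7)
The plan is to start from the identity \eqref{Intro.lpf_usub} and exploit the elementary pointwise inequalities
\[
\frac{1}{\sqrt{2}}\, e^{-x^+/2} \;\leq\; \frac{1}{\sqrt{1+e^x}} \;\leq\; e^{-x^+/2}, \qquad x^+ := \max(x,0),
\]
to reduce the problem to controlling the two functionals
\[
S_s(T) := T^{1/3}\!\!\sum_{k:\,\a_k>-s}\!(\a_k+s) = T^{1/3}\!\int_{-s}^{\infty}\!\chi^{\GOE}((u,\infty))\,du, \qquad N_s := \chi^{\GOE}(\fk{B}_1(s)).
\]
Integrating Theorem~\ref{1.3_anal_prop} over $u\in[-s,0]$ yields $\E[S_s(T)]/T^{1/3} = \tfrac{4}{15\pi}s^{5/2}+O(s)$, so $e^{-S_s/2}$ heuristically carries the desired $\exp(-\tfrac{2}{15\pi}T^{1/3}s^{5/2})$ weight; the $2^{-N_s/2}$ correction is only of size $\exp(-O(s^{3/2}))$ on typical events and is dominated by the main term when $T\geq T_0$ and $s$ is large.

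For the lower bound \eqref{3.2.eqn.anal} I would restrict the expectation to two disjoint events. On the \emph{typical} event $G_{\mathrm{typ}} := \{S_s(T)\leq \tfrac{4(1+C\ep)}{15\pi}T^{1/3}s^{5/2}\}\cap\{N_s\leq \tfrac{2(1+\ep)}{3\pi}s^{3/2}\}$, Markov's inequality applied to $S_s(T)$ together with Theorem~\ref{1.5_anal} (applied dyadically to the intervals $\fk{B}_k(s)$) force $\P(G_{\mathrm{typ}})\geq\tfrac12$, and the lower pointwise bound delivers the first term of \eqref{3.2.eqn.anal}. On the \emph{deep-tail} event $G_{\mathrm{deep}}:=\{\a_1\leq -s(1+\ep)\}$, every factor is at least $(1+e^{-\ep sT^{1/3}})^{-1/2}$; the uniform control of Theorem~\ref{1.6_anal} on individual $\a_k$, combined with the growth $\ld_k\sim (3\pi k/2)^{2/3}$, guarantees $\sum_k e^{T^{1/3}(\a_k+s)}<\infty$ with a bound uniform in $T\geq T_0$, so the infinite product is bounded below by a positive constant. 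The Tracy-Widom GOE lower tail (Proposition~\ref{prop.TW.GOE.dist}) then provides $\P(G_{\mathrm{deep}})\geq e^{-K_2 s^3}$, yielding the second term.

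For the upper bound \eqref{3.1.eqn.anal.weak} I would partition the sample space into three events. On $A_1:=\{S_s(T)\geq\tfrac{4(1-C\ep)}{15\pi}T^{1/3}s^{5/2}\}$ the upper pointwise bound immediately produces the first term. On $A_3:=\{\a_1\leq -s(1+\ep)\}$ the product is at most $1$, and the Tracy-Widom GOE lower tail gives $\P(A_3)\leq\exp(-\tfrac{(1+\ep)^3}{24}s^3(1+o(1)))\leq\exp(-\tfrac{1-C\ep}{24}s^3)$, delivering the third term. The delicate piece is the intermediate event $A_2:=A_1^c\cap\{\a_1>-s(1+\ep)\}$: here, the failure of $A_1$, after an integration-by-parts comparison against $\E[S_s(T)]$ from Theorem~\ref{1.3_anal_prop}, forces $\chi^{\GOE}(\fk{B}_1(s'))$ to lie below its mean by at least $c(s')^{3/2}$ for some $s'\in[(1-\ep)s,(1+\ep)s]$, so Theorem~\ref{1.4.final} (respectively its strengthened form under Conjecture~\ref{conjecture}) supplies $\P(A_2)\leq e^{-\eta s^{3/2}}$ (resp.\ $\leq e^{-K_1 s^{3-\delta}}$). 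Simultaneously, the existence on $A_2$ of a point $\a_j>-s(1-\ep)$, obtained by choosing the intermediate threshold carefully, forces the corresponding factor in the product to be at most $e^{-\tfrac{\ep}{2}sT^{1/3}}$, producing the $e^{-\tfrac{\ep}{2}sT^{1/3}}$ prefactor of the middle term of \eqref{3.1.eqn.anal.weak} (resp.\ \eqref{3.1.eqn.anal}).

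I expect the analysis of $A_2$ to be the main obstacle: one must simultaneously extract the deterministic prefactor $e^{-\tfrac{\ep}{2}sT^{1/3}}$ from the product \emph{and} convert the lower deviation of the integrated quantity $S_s(T)$ into a genuine lower-deviation event for $\chi^{\GOE}$ on some specific interval $\fk{B}_1(s')$ where Theorem~\ref{1.4.final} applies, all while keeping the multiplicative error constants $C$ uniform in $T\geq T_0$ and the various $\ep$-losses balanced between the two terms of the middle estimate.
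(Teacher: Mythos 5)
Your overall plan—reduce the Laplace functional to large\hyp deviation estimates for $\chi^{\GOE}$ plus the Tracy--Widom lower tail, then decompose into a ``main'' event, an ``intermediate'' event carrying a $T^{1/3}$-dependent factor, and a ``deep'' event controlled by $F_{\GOE}$—mirrors the paper, but your reduction is genuinely different: you compare the integrated counting functional $S_s(T)=T^{1/3}\int_{-s}^{\infty}\chi^{\GOE}((u,\infty))\,du$ directly against its mean (Theorem~\ref{1.3_anal_prop}), whereas the paper compares each $\a_k$ against the deterministic Airy eigenvalue $-\ld_k$ via $D_k=(-\ld_k-\a_k)_+$ and Theorem~\ref{1.6_anal}. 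The paper's route prices out $\sum J_s$ against the explicit $\ld_k$ and only then reduces the error term $\sum D_k$ to a lower\hyp deviation event (Lemma~\ref{lemma.5.3.anal}); your route needs an averaging/grid step to pass from ``$\int_0^s\chi^{\GOE}((-u,\infty))\,du$ is small'' to ``$\chi^{\GOE}([-s',\infty))$ is below its mean by $\gtrsim (s')^{3/2}$ for some $s'$ of order $s$'', which is doable (monotonicity of $g(u)=\chi^{\GOE}((-u,\infty))$ plus a union bound over $O(\ep^{-1})$ grid points bounded away from $0$ by $\ep_0 s$), but not a one\hyp liner. The same remark applies to your lower\hyp bound claim that ``Markov's inequality applied to $S_s(T)$ together with Theorem~\ref{1.5_anal}\dots force $\P(G_{\mathrm{typ}})\geq\tfrac12$'': Markov at level $(1+C\ep)\E[S_s]$ only yields probability $\geq\tfrac{C\ep}{1+C\ep}$, which is far from $\tfrac12$ for small $\ep$, so the real content again comes from a grid application of Theorem~\ref{1.5_anal}; and the tail product $\prod_{\a_k<-s}I_s(\a_k)$ is not $\Theta(1)$ but $\exp(-O(\sqrt{s}/T^{1/3}))$ on the typical event—small enough to absorb into the $\ep$-budget, but worth stating explicitly.

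The concrete gap is in the upper\hyp bound partition. With $A_3:=\{\a_1\leq -(1+\ep)s\}$, the complementary event $A_2=A_1^c\cap\{\a_1>-(1+\ep)s\}$ allows $\a_1\in(-(1+\ep)s,-(1-\ep)s]$, and on that slice no factor of the product is smaller than $\tfrac{1}{\sqrt2}$ (in fact $J_s(\a_1)\leq\tfrac12\log 2$), so you cannot extract the $e^{-\tfrac\ep2 sT^{1/3}}$ prefactor that the middle term requires. Your phrase ``choosing the intermediate threshold carefully'' suggests you sense this, but the fix must actually be made: set $A_3:=\{\a_1\leq -(1-\ep)s\}$ (which still gives $\P(A_3)\leq e^{-\tfrac{(1-\ep)^3}{24}s^3(1+o(1))}\leq e^{-\tfrac{1-C\ep}{24}s^3}$ via Proposition~\ref{prop.TW.GOE.dist}), so that on $A_2$ one has $\a_1>-(1-\ep)s$ and hence $I_s(\a_1)\leq e^{-\tfrac\ep2 sT^{1/3}}$ automatically, exactly as in the paper's split in \eqref{4.4.anal.eqn1}--\eqref{4.4.anal.eqn2}. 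Also note that you are tacitly asking Theorem~\ref{1.4.final} to produce a deviation at a random scale $s'$; the theorem is deterministic in $s$, so you must union\hyp bound over the grid and verify that every grid point $s'$ is at least $\ep_0 s$ so that $e^{-\eta(s')^{3/2}}$ still reads $e^{-\eta' s^{3/2}}$.
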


We prove Proposition \ref{prop.3.1.anal}
 in Section \ref{section.proof.prop.3.1.anal}.
We now prove the main result.

\subsection{Proof of Theorem~\ref{thm.1.1.anal}} \label{section.proof.thm.1.1.anal}
From Markov's inequality, we have
\begin{align*}
    \P( \Upsilon_T \leq - s ) 
    &= 
    \P \pth{ 
    \exp \pth{
        -\frac{1}{4} \exp \pth{
        T^{1/3}(\Upsilon_T + s)
        }
        }
    \geq e^{-1/4} \nonumber
    } \\
    &\leq 
    e^{1/4} \E \brak{ 
    \exp \pth{
        -\frac{1}{4} \exp \pth{
        T^{1/3}(\Upsilon_T + s)
        }
        }
    }\, . 
\end{align*}
From the above, we see that \eqref{3.1.eqn.anal.weak} and \eqref{3.1.eqn.anal} imply \eqref{1.4.eqn.anal.weak} and \eqref{1.4.eqn.anal} of Theorem~\ref{thm.1.1.anal}, respectively.

We now show that \eqref{3.2.eqn.anal} yields \eqref{1.5.eqn.anal}.
Let $\bar{s} := (1-\ep)^{-1}s$. Observe that 
\begin{align}
    \fk{R} &:= \E \brak{\exp\pth{- \frac{1}{4}\exp \pth{ T^{1/3} (\Upsilon_T + \bar{s})}
    }
    } 
    \nonumber \\ 
    &\leq 
    \E \brak{
    \mathds{1} \pth{ \Upsilon_T \leq -s } 
    + \mathds{1} \pth{ \Upsilon_T > -s }
    \exp\pth{- \frac{1}{4}\exp \pth{ T^{1/3} (\Upsilon_T + \bar{s})}
    }
    } 
    \nonumber \\ 
    &\leq 
    \E \brak{
    \mathds{1} \pth{ \Upsilon_T \leq -s } 
    + \mathds{1} \pth{ \Upsilon_T > -s  }
    \exp\pth{- \frac{1}{4}\exp \pth{ \ep \bar{s} T^{1/3} }
    }
    }\, .
    \label{pf.main.thm.1}
\end{align}
The second inequality follows from the fact that 
$\Upsilon_T > - s$ implies~$\Upsilon_T + \bar{s} > \ep \bar{s}$.
Continuing from \eqref{pf.main.thm.1}, we compute
\begin{align}
    \fk{R} &\leq \P(\Upsilon_T \leq - s) + 
    \exp\pth{- \frac{1}{4}\exp \pth{ \ep \bar{s} T^{1/3} }
    } \, .
    \label{3.3.eqn.anal}
\end{align}
It follows from \eqref{3.2.eqn.anal} that for all $s \geq S := S(\ep, \delta, T_0)$ and $T \geq T_0$,
\begin{align}
    \fk{R} \geq 
    \exp \pth{
    -(1+ C \ep + C' \ep) \frac{2}{15 \pi} T^{1/3} s^{5/2}
    }
    + 
    \exp \pth{- K_2 s^3} \, .
    \label{3.4.eqn.anal}
\end{align}
Here, the $C'\ep$ term 
appears because $\bar{s}^{5/2} \leq s^{5/2}(1+ C' \ep)$ for some constant $C' >0$. 
We now note that there exists a constant $S' := S' ( \ep, \delta,  T_0)$ such that for all $s \geq S'$ and $T \geq T_0$,
\begin{align}
    \exp\pth{ \ep \bar{s} T^{1/3} } 
    &\geq 
    T^{1/3} \frac{2s^{5/2}}{15 \pi} - \log \ep,
    \ \text{and thus} \nonumber
    \\ 
    \exp \pth{ -
    \exp\pth{  \ep \bar{s} T^{1/3} } 
    }
    &\leq \ep \exp \pth{
    -\frac{2}{15\pi} T^{1/3} s^{5/2}
    }\,.
    \label{3.5.eqn.anal}
\end{align}
Solving for $\P(\Upsilon_T \leq -s)$ in \eqref{3.3.eqn.anal} and 
substituting the lower bound \eqref{3.4.eqn.anal} on $\fk{R}$ and the upper bound \eqref{3.5.eqn.anal}  on $\exp \pth{ -
    \exp\pth{  \ep \bar{s} T^{1/3} } 
    }$
yields, for all $s \geq \max\{S, S'\}$ and for all $T \geq T_0$, 
\begin{align*}
    \P\pth{ \Upsilon_T \leq -s }
    \geq 
    (1-\ep)\exp \pth{
    -(1+ (C + C') \ep) \frac{2}{15 \pi} T^{1/3} s^{5/2}
    }
    + 
    \exp \pth{- K_2 s^3} \, .
\end{align*}
The multiplicative factor $(1-\ep)$ can be absorbed into the  $(1+ (C+ C') \ep)$ factor on the right-hand side above.  
Finally, taking $C := C + C'$ yields the right-hand side of \eqref{1.5.eqn.anal}, thus completing the proof of Theorem~\ref{thm.1.1.anal}. \qed

\subsection{Proof of Proposition \ref{prop.3.1.anal}}
\label{section.proof.prop.3.1.anal}
As above, let $(\a_1 > \a_2 > \dots) $ denote the GOE point process.
Define
\begin{align}
    I_s(x) &:=  \dfrac{1}{\sqrt{1+\exp(T^{1/3}(x+s))}}, 
    \text{ and}
    \label{def_I}\\
    J_s(x) &:= - \log (I_s(x)) = \frac{1}{2} \log(
    1+ \exp (T^{1/3}(x+s))
    ) \,. \label{def_J}
\end{align}
We now give upper and lower bounds on $\E_{\GOE} \brak{\prod_{k=1}^{\infty} I_s(\a_k)}$.  These bounds and Proposition \ref{Intro.ltf} allow us to complete the proof of Proposition \ref{prop.3.1.anal}.

\begin{proposition}
\label{prop.4.2.anal}
Fix any $\eta >0$, $\ep \in (0,1/3)$, $\delta \in (0,1/4)$, and $T_0 >0$. There exist positive constants $S_0 := S_0 (\eta, \ep, \delta, T_0)$,  $C:=C(T_0)$, $K_1 := K_1(\ep, \delta, T_0) >0$, and $K_2 := K_2(T_0) > 0$ 
such that for all $s \geq S_0$ and $T \geq T_0$, we have 
\begin{align}
    \E_{\GOE} \brak{ \prod_{k=1}^{\infty} I_s (\a_k) }
    \leq 
    e^{
    -\frac{2(1- C\ep)}{15 \pi} T^{1/3} s^{5/2}
    }
    +
    e^{ -\frac{\ep}{2} sT^{1/3} -\eta s^{3/2}}
    +e^{-\frac{1-C\ep}{24}s^3}
    \label{4.5.anal.eqn}
\end{align}
and 
\begin{align}
    \E_{\GOE}
    \brak{ \prod_{k=1}^{\infty} I_s (\a_k) } 
    \geq 
    e^{
    -\frac{2(1+C\ep)}{15\pi}T^{1/3} s^{5/2}
    }
    + 
    e^{- K_2 s^3} \,.
    \label{4.4.anal.eqn.weak}
\end{align}
Assuming Conjecture~\ref{conjecture}, we have the stronger upper bound
\begin{align}
    \E_{\GOE} \brak{ \prod_{k=1}^{\infty} I_s (\a_k) }
    \leq 
    e^{
    -\frac{2(1- C\ep)}{15 \pi} T^{1/3} s^{5/2}
    }
    +
    e^{ -\frac{\ep}{2} sT^{1/3} -K_1 s^{3- \delta}}
    +e^{-\frac{1-C\ep}{24}s^3}
    \label{4.4.anal.eqn}
\end{align}

\end{proposition}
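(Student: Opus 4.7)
The plan is to analyze $\prod_k I_s(\alpha_k) = \exp\bigl(-\sum_k J_s(\alpha_k)\bigr)$ by exploiting that $J_s(x)$ vanishes exponentially for $x \ll -s$ and behaves like $\tfrac{T^{1/3}}{2}(x+s)$ for $x \gg -s$, with a smooth sigmoidal transition of width $\sim T^{-1/3}$ centered at $x = -s$. Via the Abel-type identity
\[
\sum_k J_s(\alpha_k) = \int_\R J_s'(x)\, \chi^{\GOE}([x,\infty))\, dx
\]
together with the mean expansion $\E[\chi^{\GOE}([x,\infty))] = \tfrac{2}{3\pi}(-x)^{3/2} + O(1)$ from Theorem~\ref{1.3_anal_prop}, the heuristic leading value is
\[
\int_{-s}^0 \tfrac{T^{1/3}}{2} \cdot \tfrac{2}{3\pi}(-x)^{3/2}\, dx = \tfrac{2}{15\pi}\, T^{1/3} s^{5/2},
\]
which accounts for the constant $\tfrac{2}{15\pi}$ appearing in both bounds.

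\textbf{Upper bound.} I split the expectation across three disjoint events. On the Tracy--Widom tail event $\{\alpha_1 \leq -(1-\ep)s\}$, I use $\prod I_s \leq 1$ and the GOE lower-tail asymptotic (Proposition~\ref{prop.TW.GOE.dist}), giving probability at most $\exp\bigl(-\tfrac{1-C\ep}{24}s^3\bigr)$ and producing the third term of \eqref{4.5.anal.eqn}. On the complement $\{\alpha_1 > -(1-\ep)s\}$, monotonicity of $I_s$ yields $\prod_k I_s(\alpha_k) \leq I_s(\alpha_1) \leq \exp\bigl(-\tfrac{\ep}{2}\, s T^{1/3}\bigr)$. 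I further split the complement using the lower-deviation event $L = \{\chi^{\GOE}([-s,\infty)) \leq \tfrac{2}{3\pi}s^{3/2} - cs^{3/2}\}$ for an appropriate $c$: on $L$, multiply the $\exp(-\tfrac{\ep}{2}sT^{1/3})$ bound by $\P(L) \leq \exp(-\eta s^{3/2})$ from~\eqref{eqn:1.4.weak} of Theorem~\ref{1.4.final}, or by $\exp(-K_1 s^{3-\delta})$ from~\eqref{1.4.final.eq} under Conjecture~\ref{conjecture}, producing the second term in~\eqref{4.5.anal.eqn} or \eqref{4.4.anal.eqn} respectively. Finally, on $L^c \cap \{\alpha_1 > -(1-\ep)s\}$ the Abel identity combined with Theorem~\ref{1.6_anal} (to control the top finitely many $\alpha_k$, where the $O(1)$ error in Theorem~\ref{1.3_anal_prop} is non-negligible) yields $\sum J_s(\alpha_k) \geq \tfrac{2(1-C\ep)}{15\pi}\, T^{1/3} s^{5/2}$, producing the first term.

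\textbf{Lower bound.} I bound $\E[\prod I_s]$ below by the sum of contributions from two disjoint good events. On the typical event $G_1$, I use Theorem~\ref{1.5_anal} to bound upper deviations of $\chi^{\GOE}$ on every dyadic interval $\mathfrak{B}_k(\ell)$ by the allowed $O(s^{3/2})$, and Theorem~\ref{1.6_anal} to pin the top points to the Airy eigenvalues. The Abel identity then gives $\sum J_s(\alpha_k) \leq \tfrac{2(1+C\ep)}{15\pi}\, T^{1/3} s^{5/2}$ on $G_1$ with $\P(G_1) \geq 1 - o(1)$, producing the first term of~\eqref{4.4.anal.eqn.weak}. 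On the Tracy--Widom event $G_2 := \{\alpha_1 \leq -(1+\ep)s\}$, the inequality $J_s(\alpha_k) \leq \tfrac{1}{2}\exp\bigl(T^{1/3}(\alpha_k+s)\bigr)$ combined with the deterministic tail $-\alpha_k \gtrsim (3\pi k/2)^{2/3}$ from Theorem~\ref{1.6_anal} shows $\sum_k J_s(\alpha_k) = o(1)$ on $G_2$, so $\prod I_s(\alpha_k) \geq e^{-o(1)}$; the matching Tracy--Widom GOE lower-tail bound $\P(G_2) \geq \exp(-K_2 s^3)$ then produces the second term.

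\textbf{Main obstacle.} The principal challenge is to recover the sharp prefactor $\tfrac{2}{15\pi}$ with only an $O(\ep)$ multiplicative loss in both directions. This requires simultaneously absorbing into the $C\ep$ factor: (i) the smooth sigmoidal shape of $J_s'$ near $x=-s$ (of width $\sim T^{-1/3}$) versus its piecewise-linear approximation used in the heuristic integral above; (ii) the $O(s^{3/2})$ fluctuations of $\chi^{\GOE}([x,\infty))$ around its mean, which are all that Theorems~\ref{1.4.final}--\ref{1.5_anal} permit and must be integrated against $J_s'$; and (iii) the top-of-spectrum behaviour where Theorem~\ref{1.3_anal_prop}'s $O(1)$ error is comparable to the contribution of each top eigenvalue and must be sharpened by the pointwise control of Theorem~\ref{1.6_anal}. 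A secondary but persistent difficulty is verifying that every implicit constant is uniform in $T \geq T_0$, so the bookkeeping closes cleanly for the three-term decomposition.
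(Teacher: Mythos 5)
Your overall decomposition (top eigenvalue event, middle event with $I_s(\a_1)$ control, typical event) mirrors the paper's Section~\ref{section.proof.upper.and.lower}, and your lower-bound plan is broadly sound. However, the upper-bound argument has a genuine gap at the step producing the first term on the event $L^c \cap \{\a_1 > -(1-\ep)s\}$. The single lower-deviation event $L = \{\chi^{\GOE}([-s,\infty)) \leq (\frac{2}{3\pi}-c)s^{3/2}\}$, together with $\a_1 > -(1-\ep)s$, does not control \emph{where} the $\approx \theta_0$ points above $-s$ sit. On $L^c$ they could all cluster just above $-s$, in which case each $J_s(\a_k) = O(1)$ and $\sum_k J_s(\a_k) = O(s^{3/2})$, far below the needed $\frac{2}{15\pi}T^{1/3}s^{5/2}$. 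Monotonicity of $\chi^{\GOE}([x,\infty))$ in $x$ only gives you an \emph{upper} bound at $x > -s$ from the value at $x = -s$, so the Abel identity cannot extract a matching lower bound from $L^c$ alone. This is precisely why the paper introduces the random sum $\mc{S}_{\theta_0} = \sum_{k\leq \theta_0}(-\ld_k-\a_k)_+$ and Lemma~\ref{lemma.5.3.anal}: it shows $\bigcup_{k\leq\theta_0}\{D_k \geq \ep s\}$ is contained in a union of $O(\ep^{-1})$ lower-deviation events $\{\a_{k_j} \leq p_j\}$ at a grid of scales $p_j \in [-s,0]$, each controllable by Theorem~\ref{1.4.final} to give $e^{-\eta s^{3/2}}$ (or $e^{-K_1 s^{3-\delta}}$ under the conjecture). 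That uniformization over scales is the key missing piece in your plan.

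You hint at patching this with Theorem~\ref{1.6_anal}, but conditioning on $\{C_\ep^{\GOE} < M\}$ contributes a complementary event of probability $\approx \exp(-\kappa M^{1-\delta})$; for any $M$ small enough to be useful (at most $O(s)$, since larger $M$ destroys the $\pm C_\ep^{\GOE}$ control on the point locations), this is $\exp(-\kappa s^{1-\delta})$, which is far weaker than the required $e^{-\eta s^{3/2}}$ and would become the dominant term in your decomposition, changing the form of the second summand in~\eqref{4.5.anal.eqn}. For the lower bound, your route through Theorem~\ref{1.5_anal} and the Abel identity on the typical event is a legitimate alternative to the paper's, which instead deterministically bounds $\sum J_s(\a_k)$ on $\{C_\ep^{\GOE} < s^{1-\delta}\}$ via Lemma~\ref{lemma.5.4.anal} and uses Theorem~\ref{1.5_anal} only for a tail interval in the Tracy--Widom event (Claim~\ref{Claim.eqn.5.28}); both should close, but you should still verify the integrated error $\int J_s'(x)\cdot O(s^{3/2})\,dx$ over $x\in[-s,0]$ is indeed $O(\ep\,T^{1/3}s^{5/2})$, which requires choosing the constant in Theorem~\ref{1.5_anal} of order $\ep$.
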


We complete the proof of~\eqref{4.4.anal.eqn.weak} and~\eqref{4.4.anal.eqn} in Section~\ref{section.upper.bound.proof},
and the proof of\eqref{4.5.anal.eqn} in Section~\ref{section.lower.bound.proof}.

\begin{proof}[Proof of Proposition \ref{prop.3.1.anal}]
This follows immediately from Proposition~\ref{Intro.ltf} and Proposition~\ref{prop.4.2.anal}.
\end{proof}

\section{The GOE point process}
\label{section.goe}

Proposition \ref{prop.4.2.anal} reduces our problem to a question about the GOE point process. In this section, we formally define this process and examine results pertaining to the statistics of the process, such as the distribution of the GOE points, the typical locations of individual points, and deviations away from these typical locations. The results developed here connect the GOE point process to the \emph{stochastic Airy operator} (see Section \ref{subsection.SAO}) and will be crucial to the proofs that follow.

\subsection{First notions}
\label{subsection_GOEPP}

The \textit{GOE point process}, denoted by $(\a_1 > \a_2 > \dots )$ or $\chi^{\GOE}$, is a \textit{simple Pfaffian point process} on $(\R, \mc{B}(\R), \mu)$, where here $\mu$ denotes Lebesgue measure. We define this object now. 
%
%
%
%
We first define point processes via random point configurations (see, for instance, \cite[Section~4.2.1]{AGZ10}). 
Give $\R$ the Borel sigma algebra $\mc{B}(\R)$ equipped with a positive Radon measure $\mu$ (not necessarily Lebesgue). 
Let $\rm{Conf}(\R)$ denote the space of \textit{configurations} of $\R$, that is, discrete subsets. 
For any $B \in \mc{B}(\R)$ and $X \in \rm{Conf}(\R)$, let $N_B(X) := \#\{B \cap X\}$.
Endow $\rm{Conf}(\R)$ with the sigma algebra $\Sigma$ generated by the cylinder sets $C_n^B := \{X \in \rm{Conf}(\R) : N_B(X) = n \}$, for $ n \in \Z^+$.
A \textit{point process} is a probability measure $\nu$ on $(\rm{Conf}(\R), \Sigma)$. \cite[Lemma~4.2.2]{AGZ10} shows that a random configuration $X$ with distribution $\nu$ can be associated to a non-negative integer-valued random measure $\chi$ on $(\R, \mc{B}(\R), \mu)$ 
such that 
\[
\chi(B) = N_B(X)\,,
\]
and this random measure $\chi$  will also be referred to as the point process when clear.
A point process is called \textit{simple} if $\mu( e \in \R : \chi(\{e\}) >1 ) = 0$.
Intuitively, a simple point process $\chi$ evaluated on a Borel set $B$ counts the number of points contained in $B$ of the designated random configuration.

Now, for $k \geq 1$, consider the measure $\mu_k$ on $\R^k$ such that for disjoint Borel sets $B_1, \dots, B_k \in \mc{B}(\R)$, 
\[
\mu_k(B_1 \times \dots \times B_k) = 
\E_{\nu} \brak{
\# \left \{
\text{$k$-tuples of distinct points $x_1 \in X \cap B_1, \dots, x_k \in X \cap B_k$} \right \}
}\,.
\]
Assuming that $\mu_k$ is absolutely continuous with respect to $\mu^{\otimes k}$, we define the \textit{$k$-point correlation function} $\rho_k$ of $\chi$ to be the Radon-Nykodym derivative of $\mu_k$ with respect to $\mu^{\otimes k}$. This is a locally integrable function $\rho_k:  \R^k \to [0,\infty)$ such that, for measurable functions $f: \R \to \C$, we have
\begin{align}
\E_{\nu} \brak{ 
\sum_{(x_1, \dots, x_k) \in X^k} f(x_1) \dots f(x_k)
} 
= 
\int_{\R^k} \rho_k(x_1, \dots, x_k) f(x_1) \dots f(x_k) ~\rm{d}\mu^{\otimes k} \,.
\label{eqn:def-correlation-fns}
\end{align}
Here, $X$ is a random configuration with law $\nu$.
One might note that our definition of $\rho_k$ does not specify its value on points $(x_1, \dots, x_k)$ where $x_i = x_j$ for some $i \neq j$. On such points, we set $\rho_k = 0$; to understand the reasoning behind this, see \cite[Remark~4.2.4]{AGZ10}.
We call $\chi$ a \textit{Pfaffian point process} if there exists a $2\times 2$ skew-symmetric matrix-kernel $K:\R^2 \to M_2(\C)$ such that
\[
\rho_k(x_1, \dots, x_k) = \Pf [K(x_i, x_j)]_{i,j=1}^k\,,
\]
where $\Pf$ denotes the Pfaffian.

The GOE point process is the simple Pfaffian point process  with  correlation kernel $K^{\GOE}$, whose explicit form can be found, for instance, in \cite[Definition~6.1]{BBCW17} (we will not need the explicit form of $K^{\GOE}$  here). The GOE point process can be constructed as the limiting point process of the largest eigenvalues of the GOE $n\times n$ ensemble under so-called edge scaling, that is, centering by $2\sqrt{n}$ and scaling by $n^{1/6}$. We write $\chi^{\GOE}$ to denote the associated random measure and
$\rho_k^{\GOE}$ to denote the $k^{\mathrm{th}}$ correlation function of the GOE point process.
We  also write
$(\a_1 > \a_2 > \dots )$ to denote the random configuration of GOE points.

Proposition \ref{Intro.ltf} and the work achieved in Section \ref{section.proof.thm.1.1.anal} show that studying the GOE point process can serve as a proxy for studying the lower tail of the half-space KPZ equation. Theorem~\ref{1.3_anal_prop} establishes a basic statistic of the GOE point process: its expectation on the interval $[-s, \infty)$, for any $s>0$. We now prove this theorem.

\begin{proof}[Proof of Theorem~\ref{1.3_anal_prop}]
Note that for any point process $\chi$ with one-point correlation function $\rho_{1}$, we have on any interval $I \subseteq  \R$,
\begin{align*}
    \mathbb{E}\brak{\chi(I)} &= \int_I \rho_{1}(x) ~dx \,.
\end{align*}
Thus, we have
\begin{align}
    \E_{\GOE} \brak{\chi^{\GOE}(\fk{B}_1(s))}
    = 
    \int_{-s}^{\infty} \rho_1^{\GOE}(x) ~dx \, ,
    \label{eqn:goe-corr-fn}
\end{align}
for $s > 0$.
Let $\rho_{1}^{\GUE}$  denote the one-point correlation function for $\chi^{\GUE}$ .
From Equations~$(7.67)$ and~$(7.147)$ of \cite{For10}, we have the relation\footnote{\cite[Equation~7.147]{For10} writes this equation with ``$K^{\mathrm{soft}}(x,x)$" instead of $\rho_1^{\GUE}(x)$, as we have here, where $K^{\mathrm{soft}}(\cdot,\cdot)$ is defined in \cite[Equation~7.12]{For10}. Our expression follows from \cite[Equation~7.67]{For10}, which shows that $K^{\mathrm{soft}}(x,x) = \rho_1^{\GUE}(x)$, for any $x \in \R$.}
\begin{align}
    \rho_1^{\GOE}(x) = \rho_1^{\GUE}(x) 
    + \frac{1}{2} \Ai(x) \big ( 1 - \int_{x}^{\infty} \Ai(t) ~dt \big ) \, ,
    \label{eqn:E-rho1-1}
\end{align}
where $\Ai(\cdot)$ denotes the Airy function. Since $\int_{-\infty}^{\infty} \Ai(t) ~dt = 1$ (\cite[Equation~9.10.11]{NIST}), we may write \eqref{eqn:E-rho1-1} as 
\begin{align}
    \rho_1^{\GOE}(x) = \rho_1^{\GUE}(x) 
    + \frac{1}{2} \Ai(x)  \int_{-\infty}^{x} \Ai(t) ~dt \, .
    \label{eqn:goe-corr-main}
\end{align}
Now, \cite[Equation~7.69]{For10}, \cite[Equation~9.7.9]{NIST}, and \cite[Equation~9.10.6]{NIST} yield the following asymptotic expansions for $\rho_1^{\GUE}(x)$, $\Ai(x)$, and $\int_{-\infty}^{x} \Ai(t)~dt$  respectively, as  $x \to -\infty$:
\begin{align}
    \rho^{\GUE}_{1}(x) 
    &=
    \frac{\sqrt{-x}}{\pi} - \frac{\cos \pth{\frac{4}{3} (-x)^{3/2}}}{4\pi (-x)} + \mc{O}\big((-x)^{-5/2}\big) \, , 
    \label{eqn:gue-asymp}
    \\
    \Ai(x) 
    &=
    \frac{\cos \Big( \frac{2}{3}(-x)^{3/2} -\frac{\pi}{4}\Big)}{ \sqrt{\pi} (-x)^{1/4}}
    + \O\Big((-x)^{-7/4}\Big) \,, \text{ and} 
    \label{eqn:ai-asymp}
    \\
    \int_{-\infty}^{x} \Ai(t)~dt
    &= 
    \frac{\cos \Big( \frac{2}{3}(-x)^{3/2}+ \frac{\pi}{4} \Big)}{\sqrt{\pi}(-x)^{3/4}}   + \O\Big( (-x)^{-9/4} \Big) \, .
    \label{eqn:int-ai-asymp}
\end{align}
Substituting \eqref{eqn:gue-asymp}--\eqref{eqn:int-ai-asymp} into \eqref{eqn:goe-corr-main} yields
\begin{align*}
    \rho_1^{\GOE}(x)
    =
    \frac{\sqrt{-x}}{\pi} + \O \pth{ (-x)^{-5/2} } \, ,
\end{align*}
as $x \to -\infty$ (note that the cosine terms above cancel with one another after substitution into \eqref{eqn:goe-corr-main}). It follows that 
\begin{align}
    \int_{-s}^{-1} \rho_1^{\GOE} (x) ~dx = \frac{2}{3 \pi}s^{3/2} + \fk{D}_1(s) \, ,
    \label{eqn:rho-goe-int-s}
\end{align}
where $\fk{D}_1(s)$ satisfies $\sup_{s > 0 } \abs{\fk{D}(s)} < \infty$.

Next, because $\rho_1^{\GUE}(x)$,  $\Ai(x)$, and $\int_{-\infty}^x \Ai(t)~dt$ are bounded over  $x \in [-1,0]$, we have
\begin{align}
    \int_{-1}^{0} \rho_1^{\GOE} (x) ~dx = \fk{D}_2 \, ,
    \label{eqn:rho-goe-int-0-1}
\end{align}
for some constant $\fk{D}_2 <\infty$.  

It remains to handle the integral of $\rho_1^{\GOE}(x)$ over $x \in [0, \infty)$. \cite[Equation~7.72]{For10} states that
\begin{align*}
    \rho_1(x) = e^{-4x^{3/2}/3} \pth{1+ o(1)}\,,
\end{align*}
and thus we have $\int_{0}^{\infty} \rho_1^{\GUE}(x)~dx = \fk{D}_3$, for some constant $\fk{D}_3$. Recall that $\Ai(x) \geq 0$ for $x \geq 0$. It then follows from \eqref{eqn:goe-corr-main} and the triangle inequality that
\begin{align}
    \abs{ \int_0^{\infty} \rho_1^{\GOE}(x) ~dx}
    &\leq 
    \abs{\fk{D}_3}
    + \frac{1}{2} \int_0^{\infty} \Ai(x)
    \abs{\int_{-\infty}^x \Ai(t)~dt} ~dx \, .
    \label{eqn:rho-goe-int>0}
\end{align}
Since $\int_{-\infty}^{\infty} \Ai(t)~dt = 1$, $\int_{-\infty}^0 \Ai(t)~dt = 2/3$ (\cite[Equation~9.10.11]{NIST}), and $\Ai(t) \geq 0$ for $t \geq 0$, we have $\abs{\int_{-\infty}^x \Ai(t)~dt} \leq \abs{\int_{-\infty}^{\infty} \Ai(t)~dt} = 1$ for all $x \geq 0$. It then follows from \eqref{eqn:rho-goe-int>0} and the identity  $\int_{0}^\infty \Ai(t)~dt = 1/3$ that
\begin{align}
    \int_0^{\infty} \rho_1^{\GOE}(x) ~dx  = \fk{D}_4 \, ,
    \label{eqn:rho-goe-int>0-final}
\end{align}
for some constant $\fk{D}_4$. Combining equations~\eqref{eqn:goe-corr-fn},
\eqref{eqn:rho-goe-int-s},
\eqref{eqn:rho-goe-int-0-1}, and \eqref{eqn:rho-goe-int>0-final} yields
\begin{align}
    \E_{\GOE}\brak{\chi^{\GOE}( \fk{B}_1(s))}
    = \frac{2}{3\pi}s^{3/2} + D_1(s) \,,
\end{align}
where 
$D_1(s) = \fk{D}_1(s) + \fk{D}_2 + \fk{D}_4$, and therefore clearly satisfies $\sup_{s > 0} \abs{D(s)} < \infty$. Thus, we have \eqref{1.3_exp}.
\end{proof}

\subsection{The \texorpdfstring{$\beta$}{} stochastic Airy operator}
\label{subsection.SAO}

We now apply and enhance the tools  developed in \cite[Section~4.3]{CG18} to connect the GOE point process with the eigenvalues of the \textit{stochastic Airy operator} $\mc{H}_{\beta}$ with $\beta =1$.
Observed in \cite{ES07} and proved in \cite[Theorem~1.1]{RRV11}, Proposition \ref{rrv11_thm} gives an equivalence in distribution between the eigenvalues of $\mc{H}_{\beta}$ and the negatives of the GOE points. 
Proposition \ref{prop4.5_cg18} below was proved in \cite[Proposition~4.5]{CG18}, and establishes a uniform bound on the deviations of the  (random) $\mc{H}_{\beta}$ eigenvalues from the eigenvalues of the (deterministic) \textit{Airy operator}, and Theorem~\ref{1.6_anal} establishes the same uniform bound on deviations of the GOE points from these deterministic eigenvalues.  
Finally, Proposition \ref{airy_eval_bound}, which was proved in \cite{MT59}, approximates the location of each eigenvalue of the Airy operator.

We now define the stochastic Airy operator through the theory of Schwartz distributions.

\begin{definition}[stochastic Airy operator]
Let $D := D(\R^+)$ denote the space of distributions, i.e., 
the continuous dual of the space of smooth, compactly supported test functions
equipped with the topology of uniform convergence of all derivatives on compact sets. 
All formal derivatives of any continuous function $f$ are distributions, with action on any test function $\phi \in C_0^{\infty}$ given by integration by parts as follows:
\[
\prec \phi, f^{(k)}(x) \succ 
:=
(-1)^k \int f(x) \phi^{(k)}(x) ~dx \,,
\]
where $\prec \cdot, \cdot \succ$ is notation not to be confused with the $L^2$ inner product $\ip{\cdot, \cdot}$.
In particular, since Brownian motion $B$ is a random continuous function, its formal derivative $B'$ is a random element of $D$.
The $\beta > 0$ \textbf{stochastic Airy operator}
is a random linear map 
\[
\mc{H}_{\beta}: H_{\rm{loc}}^1 \to D
\]
such that 
\[
\mc{H}_{\beta} f = -f^{(2)} + xf + \frac{2}{\sqrt{\beta}} fB' \,,
\]
where $H^1_{\rm{loc}}$ is the space of functions $f: \R^+ \to \R$ such that for any compact $I$, $f' \mathds{1}(I) \in L^2$.
Though $D$ is only closed under multiplication by smooth functions and $f \in H_{\rm{loc}}^1$,
we make sense of $fB'$ as the derivative  of
$\int_0^y fB' ~dx := -\int_0^y Bf'~dx + f(y) B_y - f(0) B_0$.
The \bf{Airy operator} $\mc{A} := -\p_x^2 + x$ is the non-random part of $\mc{H}_{\beta}$.

To define the eigenvalues/eigenfunctions of $\mc{H}_{\beta}$, we define the Hilbert space $L^*$ with norm
\[
\norm{f}_*^2 =
\int_0^{\infty} \pth{(f')^2 + (1+x) f^2} ~dx \,, 
\ \ \ \ \
L^* := 
\{
f : f(0) = 0, \norm{f}_* < \infty
\} \, .
\]
We say a pair $(f, \Lambda) \in L^* \times \R$ is an eigenfunction/eigenvalue pair for $\mc{H}_{\beta}$ if
$\mc{H}_{\beta}f = \Lambda f$.
\end{definition} 

The following is a special case of \cite[Theorem~1.1]{RRV11}, namely, the $\beta =1$ case.\footnote{The result is proved for any $\beta$: under edge scaling, the $k$ largest eigenvalues of the $n\times n$ Hermite $\beta$-ensemble converge jointly in distribution to the smallest $k$ eigenvalues of $\mc{H}_{\beta}$ as $n \to \infty$.}

\begin{proposition}[{\cite[Theorem~1.1]{RRV11}}]
\label{rrv11_thm}
\label{rrv11_thm_cor}
Let $(\Lambda_1 < \Lambda_2 < \dots)$ denote the eigenvalues of $\mc{H}_{1}$, and recall that $(\a_1 > \a_2 > \dots)$ denotes the GOE point process. Then for any $k \geq 1$,  we have
\begin{align}
   (-\a_1, \dots, -\a_k) \ \overset{(\d)}{=} \
     (\Lambda_1, \dots, \Lambda_{k}) \,. \label{rrv11_4.4}
\end{align}
\end{proposition}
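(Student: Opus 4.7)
My plan is to follow the tridiagonal-matrix approach of Ramirez-Rider-Virag. The first step is to invoke the Dumitriu-Edelman tridiagonal representation: for $\beta = 1$, the joint law of the eigenvalues of the GOE $n \times n$ matrix agrees with the joint law of the eigenvalues of a real symmetric tridiagonal matrix $T_n$ with independent Gaussian entries on the main diagonal and independent chi-distributed entries (of appropriate degrees) on the off-diagonals. This replaces the random matrix problem by a one-dimensional random discrete Schr\"odinger-type operator, for which operator-theoretic tools are available.

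Next I would rescale at the upper spectral edge. Let $\lambda_1^{(n)} \geq \cdots \geq \lambda_n^{(n)}$ denote the eigenvalues of the GOE $n \times n$ matrix, and set
\[
H_n \;:=\; n^{1/6}\bigl(2\sqrt{n}\, I - T_n\bigr),
\]
so that the smallest $k$ eigenvalues of $H_n$ are exactly $n^{1/6}(2\sqrt{n} - \lambda_k^{(n)})$. By the defining edge convergence of the GOE point process, these quantities converge jointly in distribution to $(-\a_1, \ldots, -\a_k)$. It therefore suffices to prove that they also converge jointly to $(\Lambda_1, \ldots, \Lambda_k)$.

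The heart of the proof is to show that $H_n$, viewed via the canonical embedding of $\ell^2(\N)$ into $L^2(\R_+)$ on the grid $\{i\, n^{-1/3}\}_{i \geq 1}$, converges to $\mc{H}_1$ in a variational (equivalently, norm-resolvent) sense. I would proceed in two stages. First, establish \emph{tightness} via uniform-in-$n$ lower bounds on the discrete quadratic form that localize low-lying eigenfunctions to a compact subset of $[0,\infty)$ with overwhelming probability, thereby ruling out escape of eigenvalues to $-\infty$. Second, prove a $\Gamma$-type convergence of quadratic forms by Taylor-expanding the action of $H_n$ on a smooth test function $\phi \in L^*$: the second-difference piece produces $-\phi''$, the centered diagonal contributes $x \phi^2$ (after accounting for the edge rescaling), and the centered chi-fluctuations on the off-diagonals, summed via an invariance principle for their partial sums, produce the white-noise term $\tfrac{2}{\sqrt{\beta}} \int \phi^2 \, dB$. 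Standard Courant-Fischer arguments for semibounded operators then promote this form convergence into joint convergence of the smallest $k$ eigenvalues.

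The main obstacle will be the variational convergence of the noise term. Since $B'$ is only a Schwartz distribution, one must interpret the limiting pairing $\prec \phi^2, B' \succ$ via the integration-by-parts prescription used in the very definition of $\mc{H}_\beta$, and then show that the discrete partial sums of the centered off-diagonal chi-entries converge to Brownian motion uniformly across a large class of test functions bounded in $\|\cdot\|_*$. Establishing this uniform invariance principle, together with the tightness step needed to prevent low-lying eigenfunctions from spreading out or drifting to $+\infty$, will be the main technical burden of the argument.
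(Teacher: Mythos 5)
The paper does not prove this proposition: it is imported verbatim from \cite[Theorem~1.1]{RRV11}, specialized to $\beta = 1$, and the proposition label even carries the citation. What you have written is a high-level pr\'ecis of the actual argument in \cite{RRV11}, so you are not taking a ``different route from the paper'' — you are outlining the proof that the paper deliberately delegates to the reference.

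As a sketch of \cite{RRV11}, your outline is substantially faithful: tridiagonalization via Dumitriu--Edelman, edge centering and $n^{1/6}$ scaling, embedding $\ell^2(\Z_{\geq 1})$ into $L^2(\R_+)$ on the mesh $n^{-1/3}\Z$, and a two-part argument (tightness of low-lying eigenfunctions; variational convergence of the discrete quadratic form to the form of $\mc{H}_\beta$, with the centered chi off-diagonals supplying the noise term via an invariance principle) is exactly the shape of their proof. Two imprecisions worth flagging. First, \cite{RRV11} does not establish norm-resolvent convergence, and it is not equivalent to what they do prove; the convergence is phrased in terms of quadratic forms and a deterministic criterion they develop, closer in spirit to a $\Gamma$-convergence/Courant--Fischer argument than to a resolvent estimate, so the parenthetical ``(equivalently, norm-resolvent)'' overstates the case. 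Second, the Courant--Fischer step is not entirely ``standard'' here: one must match discrete eigenfunctions in $\ell^2$ against test functions in $L^*$ across the embedding, control both lower and upper bounds on each $\min$-$\max$ level, and argue that simplicity of the limiting spectrum forces joint convergence of the ordered tuple, not just of each coordinate marginally. Both points are resolved in \cite{RRV11}, but they constitute real technical content rather than routine bookkeeping, and a complete proof would have to reproduce them.

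Within the scope of this paper, though, none of this is required; the result is correctly invoked as an external input, and your understanding of where it comes from is accurate.
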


\cite{RRV11} and \cite{Vir14} show that there exists a  random band with uniform width $C_{\ep}$ around each eigenvalue of the Airy operator such that  each eigenvalue of $\mc{H}_{\beta}$ is contained in the band around the corresponding Airy operator eigenvalue.

\begin{proposition}[{\cite[Proposition~4.5]{CG18}}]
\label{prop4.5_cg18}
Denote the eigenvalues of the Airy operator $\mc{A}$  by $(\ld_1 < \ld_2 < \dots)$ and the eigenvalues of $\mc{H}_{\beta}$ by $(\Lambda_1^{\beta} < \Lambda_2^{\beta} < \dots)$. 
For any $\ep \in (0,1)$, define the random variable $C_{\ep}$ as the smallest real number such that for all $k \geq 1$, 
\[
(1-\ep) \ld_k - C_{\ep} \leq \Lambda_k^{\beta} \leq (1+ \ep) \ld_k + C_{\ep}. 
\]
Then for all $\ep, \delta \in (0,1)$, there exist positive constants $S_0 := S_0(\ep, \delta)$, and $\kappa :=  \kappa(\ep, \delta)$ such that for all $s\geq S_0$,
\begin{align}
    \P \pth{C_{\ep} \geq \frac{s}{\sqrt{\beta}}} \leq \kappa \exp \pth{-\kappa s^{1-\delta}}.
    \label{4.6.eqn.anal}
\end{align}

\end{proposition}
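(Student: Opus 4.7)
The approach is variational: the stochastic Airy operator is a white-noise multiplicative perturbation of the deterministic Airy operator, and the Courant-Fischer min-max characterization of eigenvalues reduces bounds on $|\Lambda_k^\beta - \ld_k|$ to controlling the noise perturbation of the quadratic form on a suitable $k$-dimensional subspace. Write $Q_\beta(f) = Q_0(f) + \frac{2}{\sqrt{\beta}} N(f)$, where $Q_0(f) = \int_0^\infty [(f')^2 + x f^2]\,dx$ is the deterministic Airy form on $L^\ast$ and $N(f) := \int_0^\infty f^2\,dB$ is defined through the integration-by-parts convention from the definition of $\mc{H}_\beta$. My plan is to bound $|N(f)|$ uniformly over the unit spheres of natural $k$-dimensional test subspaces indexed by $k$, and to track carefully how this bound depends on $k$.

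For the upper bound on $\Lambda_k^\beta$, I would take $V_k := \mathrm{span}(\psi_1, \dots, \psi_k)$ with the $\psi_i$ the deterministic Airy eigenfunctions; every unit $f \in V_k$ satisfies $Q_0(f) \leq \ld_k$, so the task reduces to controlling $S_k := \sup_{f \in V_k, \|f\|=1} |N(f)|$. Since $f \mapsto N(f)$ is a centered Gaussian process on the unit sphere of $V_k$, the Borell-TIS inequality yields sub-Gaussian tails around $\E[S_k]$ with variance-scale $\sigma_k := \sup_{\|f\|=1} \mathrm{Var}(N(f))^{1/2}$. Using Proposition~\ref{airy_eval_bound} (i.e.\ $\ld_k \sim (3\pi k/2)^{2/3}$) and the fact that the Airy eigenfunctions are essentially supported on $[0,\ld_k]$ with $L^4$-norms controlled polynomially in $\ld_k$, both $\E[S_k]$ and $\sigma_k$ grow only polynomially in $k$. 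The multiplicative slack $\ep\ld_k$ in the statement is what makes the argument close: the typical noise magnitude is absorbed into $\ep\ld_k$, leaving only the Gaussian fluctuation to enter $C_\ep$. The lower bound is analogous via the dual max-min formulation on the $L^2$-orthogonal complement of $\mathrm{span}(\psi_1, \dots, \psi_{k-1})$, where $Q_0(f) \geq \ld_k$ for unit $f$.

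The main obstacle is the uniformity in $k$, since $C_\ep$ is a supremum over all $k \geq 1$. For fixed $k$, a bound of the form $\P(S_k \geq c(\ep\ld_k + s)\sqrt{\beta}) \leq C\exp(-c' s^2/\sigma_k^2)$ is immediate from Borell-TIS; the crux is to verify these tails can be summed over $k$. I anticipate splitting into a moderate-$k$ regime ($\ld_k \lesssim s$), where direct Gaussian concentration in $s$ suffices, and a large-$k$ regime ($\ld_k \gg s$), where $\ep\ld_k$ already dominates $s$ and one needs only to control the probability that $S_k$ deviates from its mean on scale $\ep\ld_k$, which yields super-polynomial tails that sum comfortably. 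The loss from $s^2$ to $s^{1-\delta}$ in the conclusion absorbs this union bound; any $\delta \in (0,1)$ is permissible because of the rapid decay on each tail. An alternative, and in places more direct, route is to use the Riccati-diffusion reformulation from \cite{RRV11}, where each $\Lambda_k^\beta$ is realized as a passage-time statistic of an SDE and the uniform-in-$k$ tail bound can be extracted via diffusion estimates; this is especially convenient for the lower bound, where the variational argument is slightly more delicate.
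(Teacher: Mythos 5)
The present paper does not prove this proposition; it is quoted from \cite[Proposition~4.5]{CG18}, whose proof is in turn built on the quadratic-form machinery of \cite{RRV11}. Your sketch has the right variational skeleton, but it departs from the CG18/RRV11 route in a way that introduces two real gaps. The CG18/RRV11 argument rests on a single \emph{uniform} quadratic-form comparison: there is one random constant $C$ (with tails $\P(C \geq s) \lesssim \exp(-\kappa s^{1-\delta})$, obtained by decomposing $B$ over unit intervals and summing Gaussian tails) such that $|2N(f)/\sqrt{\beta}| \leq \ep\, Q_0(f) + C\|f\|_2^2$ holds for \emph{every} $f$ in the form domain simultaneously. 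Fed into Courant--Fischer this yields the two-sided sandwich on $\Lambda_k^\beta$ for all $k$ at once, so uniformity in $k$ is free and only one tail estimate is needed.

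Your route instead bounds $N$ subspace by subspace, and this is where it breaks. For the lower bound on $\Lambda_k^\beta$ the max-min formula gives $\Lambda_k^\beta \geq \inf\{Q_\beta(f) : f \perp V_{k-1},\ \|f\|_2=1\}$; the orthogonal complement of $V_{k-1}$ in $L^*$ is infinite-dimensional, so the finite-dimensional Borell--TIS argument you invoke for the upper bound has no analogue there, and controlling $\sup|N(f)|$ on that set without first trading $N$ against $Q_0$ is exactly the difficulty that the form comparison was designed to avoid. You flag this as ``more delicate'' and float the Riccati diffusion as an alternative, but that is a different proof, not a patch to this one. Even on the upper-bound side, the union bound over $k$ requires $\E[S_k]$ and $\sigma_k = \sup_{f\in V_k,\|f\|_2=1}\|f\|_4^2$ to be dominated by $\ep\ld_k \sim \ep k^{2/3}$, which amounts to quantitative $L^4$-asymptotics for Airy eigenfunctions and an operator-norm bound for the random matrix $\bigl(\int \psi_i\psi_j\,dB\bigr)_{i,j\leq k}$; you assert these grow ``only polynomially'' without verification, and the argument does not close without them. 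To make your approach rigorous you would end up needing (a) a form-comparison inequality of RRV11 type anyway (to handle the lower bound) and (b) the eigenfunction $L^4$ estimates in quantitative form --- at which point you have essentially reassembled the proof in \cite{CG18}.
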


Proposition~\ref{prop4.5_cg18} gives an exponential upper-tail bound on $C_{\ep}$ that will be crucial to our proof of Theorem~\ref{1.5_anal}.
Note that Theorem~\ref{1.6_anal} follows immediately from Propositions~\ref{rrv11_thm} and~\ref{prop4.5_cg18}.

To prove Theorem~\ref{1.5_anal}, we will also need the following results on the approximate location of eigenvalues of the Airy operator $\mc{A} = -\p_x^2+ x$.

\begin{proposition}[\cite{MT59}] 
\label{airy_eval_bound}
If the eigenvalues of the Airy operator $\mc{A}$ are denoted by $(\lambda_1 < \ld_2 <\dots)$, then for all $n \geq 1$, we have 
\begin{align}
    \ld_n = \pth{
    \frac{3\pi}{2}\pth{n - \frac{1}{4} + \mc{R}(n)}
    }^{2/3}, \label{1.5_anal_evalest}
\end{align}
where for some large constant $K \in \R$, we have 
\[
\abs{\mc{R}(n)} \leq K/n.
\]

\end{proposition}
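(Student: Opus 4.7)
The plan is to convert the eigenvalue problem into a statement about zeros of the Airy function and then appeal to the classical asymptotic expansion of $\Ai(-x)$ as $x \to \infty$. Concretely, the eigenvalue equation $\mathcal{A}f = -f'' + xf = \lambda f$ on $[0,\infty)$ with boundary condition $f(0)=0$ and $f \in L^2$ becomes $f''(x) = (x-\lambda)f(x)$. Substituting $y = x-\lambda$ reduces this to the standard Airy equation $f''(y) = y f(y)$, whose two independent solutions are $\Ai(y)$ and $\Bi(y)$. The $L^2$ requirement at $+\infty$ kills the $\Bi$-component (since $\Bi$ grows like $e^{(2/3) y^{3/2}}$), leaving $f(x) = c\,\Ai(x-\lambda)$. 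The Dirichlet condition $f(0)=0$ then forces $\Ai(-\lambda) = 0$. Hence $\lambda_n = -a_n$, where $a_1 > a_2 > \cdots$ are the (negative) zeros of $\Ai$.

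Next I would invoke the large-argument asymptotic expansion of $\Ai$: writing $\zeta = \tfrac{2}{3}(-a)^{3/2}$, one has
\[
\Ai(a) = \frac{1}{\sqrt{\pi}\,(-a)^{1/4}}\Bigl[\cos\bigl(\zeta - \tfrac{\pi}{4}\bigr) + \tfrac{u_1}{\zeta}\sin\bigl(\zeta - \tfrac{\pi}{4}\bigr) + O(\zeta^{-2})\Bigr]
\]
as $a \to -\infty$, with $u_1$ an explicit constant (see \cite[Eq.~9.7.9]{NIST}). Setting this equal to zero at $a = a_n$, one finds $\tan(\zeta_n - \pi/4) = -u_1/\zeta_n + O(\zeta_n^{-3})$. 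Since the zeros of $\cos$ are at $(k-\tfrac{1}{2})\pi$, a standard perturbation of these roots yields
\[
\zeta_n - \tfrac{\pi}{4} = (n-\tfrac{1}{2})\pi + O(1/\zeta_n) = (n-\tfrac{1}{2})\pi + O(1/n),
\]
so that $\tfrac{2}{3}(-a_n)^{3/2} = (n-\tfrac{1}{4})\pi + O(1/n)$. Rearranging gives
\[
-a_n = \Bigl(\tfrac{3\pi}{2}\bigl(n - \tfrac{1}{4} + \mathcal{R}(n)\bigr)\Bigr)^{2/3}, \qquad |\mathcal{R}(n)| \leq K/n,
\]
which is exactly \eqref{1.5_anal_evalest} upon identifying $\lambda_n = -a_n$.

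The main obstacle is a bookkeeping issue rather than a conceptual one: one must make the perturbative inversion of $\zeta_n - \pi/4 \approx (n-\tfrac{1}{2})\pi$ uniform over $n$, including $n=1$, to justify the absolute bound $|\mathcal{R}(n)| \leq K/n$ for \emph{all} $n\geq 1$ (not merely asymptotically). This is handled by noting that the first few zeros $a_1, a_2, \ldots$ are known numerically and can be absorbed into the constant $K$; for the tail $n \geq N_0$, the contraction mapping argument applied to $\zeta \mapsto (n-\tfrac{1}{4})\pi - \tfrac{u_1}{\zeta} + O(\zeta^{-3})$ gives a unique root within $O(1/n)$ of $(n-\tfrac{1}{4})\pi$, and then taking $K$ to dominate both ranges completes the bound. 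An alternative route, which matches the original argument of \cite{MT59}, bypasses the full $\Ai$-expansion and instead compares solutions of $f'' = (x-\lambda)f$ to trigonometric/exponential comparison functions via the Sturm oscillation theorem on $[0, \lambda]$ and on $[\lambda, \infty)$; counting oscillations on the classically allowed region $[0,\lambda]$ produces the phase integral $\int_0^{\lambda}\sqrt{\lambda - x}\,dx = \tfrac{2}{3}\lambda^{3/2}$, which reproduces the leading $(\tfrac{3\pi}{2}n)^{2/3}$ behavior, and a careful Langer-type connection formula at the turning point $x=\lambda$ supplies the $-\tfrac{1}{4}$ correction and the $O(1/n)$ remainder.
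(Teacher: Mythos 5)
The paper does not actually prove this proposition: it imports it verbatim from \cite{MT59} with a citation and no argument. So there is no ``paper's own proof'' to compare against, and what you have supplied is a self-contained derivation of the cited fact. Your derivation is essentially correct and follows the standard route. The reduction is right: $\mathcal{A}f = \lambda f$ with $f(0)=0$ and $f\in L^2(\R_+)$ forces $f(x) = c\,\Ai(x-\lambda)$ (the $L^2$ condition at $+\infty$ kills the $\Bi$ branch), so $\Ai(-\lambda)=0$ and $\lambda_n = -a_n$ with $a_n$ the $n$th negative zero of $\Ai$. From there, the large-argument cosine/sine expansion of $\Ai$ (NIST 9.7.9, or equivalently the closed-form zero asymptotics NIST 9.9.6--9.9.8) gives $\tfrac{2}{3}(-a_n)^{3/2} = \pi(n-\tfrac14) + \mathcal{O}(1/n)$, which is exactly \eqref{1.5_anal_evalest}, and absorbing the finitely many small-$n$ cases into $K$ is the right move to upgrade an asymptotic statement to a bound valid for all $n\ge 1$.

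One small slip to flag: after setting the truncated expansion $\cos(\zeta-\tfrac\pi4) + \tfrac{u_1}{\zeta}\sin(\zeta-\tfrac\pi4) + \mathcal O(\zeta^{-2}) = 0$ to zero, the natural equation to perturb around is $\cot(\zeta_n - \tfrac\pi4) = -u_1/\zeta_n + \mathcal O(\zeta_n^{-3})$, not $\tan(\zeta_n-\tfrac\pi4) = -u_1/\zeta_n$; near a root of the cosine, $\tan$ blows up rather than being small, so dividing by $\sin$ instead of $\cos$ is what makes the implicit-function (or contraction) step go through. This does not affect your conclusion, but as written the displayed equation is wrong. You should also state explicitly that the index on the trigonometric zero matches the index on the Airy zero (i.e., the $n$th zero of $\Ai$ lies in the $n$th ``band'' of the leading oscillation); this is standard and can be justified via the monotonicity of $\zeta\mapsto \zeta$ together with the interval-counting you sketch, but it deserves a sentence. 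Beyond those points, the argument is sound, and your remark that \cite{MT59} itself proceeds via Sturm oscillation and a Langer connection formula rather than the full Poincar\'e expansion of $\Ai$ is a fair description of the alternative.
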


\begin{corollary}\label{1.5_anal_expchi}
For any $T \in \R_{\geq 0}$, define $k := k(T) = \#\{n: \ld_n \leq T \}$.
We have 
\[
k = \frac{2}{3\pi}T^{3/2} + C_1(T) \,,\]
where $\sup_{x>0} \abs{C_1(x)} < 1$; 
thus,
\begin{align}
    k- \E\brak{\chi^{\GOE}[-T, \infty)} = \mc{O}_{T}(1) \, .
\end{align}

\begin{proof}
From \eqref{1.5_anal_evalest}, it is clear that $k = \floor{x}$, where $x \in \R_{\geq 0}$ satisfies 
\begin{align}
    T &= \pth{
    \frac{3\pi}{2}\pth{x - \frac{1}{4} + \mc{R}(x)}
    }^{2/3} \, .
\end{align}
Solving for $x$ gives 
\begin{align}
    x &= \frac{2}{3\pi} T^{3/2} + \frac{1}{4} + \mc{R}(x) \, .
\end{align}
Recall from Proposition~\ref{airy_eval_bound} that $\abs{\mc{R}(x)} \leq K/x$. As $T$ approaches $\infty$, we have $x \sim \frac{2}{3\pi}T^{3/2}$, and thus  $k$ will simply be the the closest integer to $\frac{2}{3\pi} T^{3/2} + \frac{1}{4}$. 
From the expression $\E\brak{\chi^{\GOE}[-T, \infty)} = \frac{2}{3\pi}T^{3/2} + D_1(T)$ given by Theorem~\ref{1.3_anal_prop}, the corollary follows. 
\end{proof}
\end{corollary}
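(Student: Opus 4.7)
The plan is to extract $k(T)$ from the asymptotic formula for the Airy eigenvalues in Proposition~\ref{airy_eval_bound} and then compare with the leading asymptotics of $\E[\chi^{\GOE}[-T,\infty)]$ provided by Theorem~\ref{1.3_anal_prop}. By definition $k(T)$ is the largest integer $n$ such that $\ld_n \leq T$. Substituting the expression \eqref{1.5_anal_evalest}, this is equivalent to asking for the largest integer $n$ satisfying
\[
\tfrac{3\pi}{2}\bigl(n - \tfrac{1}{4} + \mc{R}(n)\bigr) \leq T^{3/2}, \quad \text{equivalently,} \quad n \leq \tfrac{2}{3\pi} T^{3/2} + \tfrac{1}{4} - \mc{R}(n).
\]
So the first step is simply to read off that $k$ is the integer closest from below to $\tfrac{2}{3\pi}T^{3/2} + \tfrac{1}{4} - \mc{R}(k)$.

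Next I would quantify the remainder. For $T$ large we have $k \gtrsim T^{3/2}$ and hence, by the bound $|\mc{R}(n)|\leq K/n$, $\mc{R}(k)=o(1)$ as $T\to\infty$. Therefore $k$ differs from $\tfrac{2}{3\pi}T^{3/2} + \tfrac{1}{4}$ by at most $\tfrac{1}{2}+o(1)$, yielding $C_1(T) = k - \tfrac{2}{3\pi}T^{3/2}$ with $|C_1(T)| \leq \tfrac{3}{4} + o(1) < 1$ once $T$ exceeds some threshold. For $T$ below that threshold, $k(T)$ takes only finitely many integer values and $\tfrac{2}{3\pi}T^{3/2}$ stays bounded, so one checks the bound $|C_1(T)|<1$ directly by inspecting the small-$T$ regime (in particular noting $k(T)=0$ for $T<\ld_1$, so $C_1(T)=-\tfrac{2}{3\pi}T^{3/2}$ remains well below $1$ on this range by a direct calculation using $\ld_1$).

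Finally, I would combine the two asymptotics. Theorem~\ref{1.3_anal_prop} gives $\E[\chi^{\GOE}[-T,\infty)] = \tfrac{2}{3\pi}T^{3/2} + D_1(T)$ with $\sup_{T>0}|D_1(T)|<\infty$. Subtracting this from the expression for $k$, the $\tfrac{2}{3\pi}T^{3/2}$ terms cancel and we are left with
\[
k - \E[\chi^{\GOE}[-T,\infty)] = C_1(T) - D_1(T) = \mc{O}_T(1),
\]
as claimed.

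The only mildly delicate step is verifying the sharp bound $|C_1(T)|<1$ uniformly over all $T>0$ (rather than just for large $T$). This requires handling the small-$T$ regime by hand, which is routine since $k$ is constant on each interval $[\ld_n,\ld_{n+1})$ and $\tfrac{2}{3\pi}T^{3/2}$ is monotone. The large-$T$ regime is controlled purely by the quantitative remainder $|\mc{R}(n)|\leq K/n$ in Proposition~\ref{airy_eval_bound}, so no further probabilistic input is needed.
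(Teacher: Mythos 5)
Your proof is essentially the same as the paper's: both extract $k(T)$ from the eigenvalue asymptotics in Proposition~\ref{airy_eval_bound}, deduce that $k$ is the integer closest to $\tfrac{2}{3\pi}T^{3/2}+\tfrac14$ up to the $\mc{R}$-remainder, and then subtract the expansion from Theorem~\ref{1.3_anal_prop}. You are somewhat more careful about establishing $|C_1(T)|<1$ uniformly over all $T>0$ (the paper's argument only directly yields the bound for large $T$, implicitly glossing over the small-$T$ regime), but this is a refinement of the same argument rather than a genuinely different route.
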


\section{The cumulant generating function for \texorpdfstring{$\chi^{\GOE}$}{}}
\label{section.thm.1.7}
The proof of Theorem~\ref{1.4.final}, which makes up the contents of Section \ref{section.thm.1.4.proof}, will boil down to estimating the cumulant generating function for $\chi^{\GOE}$,
\begin{align*}
    F_1(s, v) &:= \E \brak{ \exp \pth{ - v \chi^{\rm{\GOE}} \pth{ [s,\infty) }}} \, .
\end{align*}
The main result of this section is Theorem~\ref{1.7.rewrite.F_1}, which connects $F_1(s,v)$ to the distribution function of the largest eigenvalue of the \textit{thinned GOE point process} via a \textit{Fredholm Pfaffian}. Theorem~\ref{1.7.rewrite.F_1} is a major input towards
Corollary~\ref{cor:bb17-fixed} and Theorem~\ref{1.7.final}, which provide the needed bounds on $F_1(s,v)$ to prove Theorem~\ref{1.4.final} in Section~\ref{section.thm.1.4.proof}.

\subsection{The thinned GOE point process and the Painlev\'e II equation}
\label{subsection.thinned.goe.painleve}

Theorem~\ref{1.7.rewrite.F_1} equates $F_1(s,v)$ to the distribution function $\mc{F}_1(s,v)$ of the largest particle $\a_1(\gamma)$  of the \textit{thinned GOE point process with parameter} $\gamma:= 1- e^{-v}$.
This is the point process obtained by independently removing each particle of the GOE point process (see Section \ref{section.goe}) with probability $1 - \gamma$. 
We may similarly define the \textit{thinned GUE point process} and the distribution function $\mc{F}_2(s,v)$ of the largest particle of the thinned GUE point process with parameter $\gamma$. Note that, like the GOE point process, the thinned GOE point process is simple and Pfaffian. To see that it is Pfaffian, let $\{Y_i\}_{i \in \N}$ be a sequence of i.i.d. Bernoulli random variables such that $\P(Y_1 = 1) = \gamma$. 
Let $\nu^{\GOE}$ and $\nu^{\mathrm{thin}}$ be the laws on $\mathrm{Conf}(\R)$ associated to the GOE and thinned GOE point processes respectively, and let $X$ and $\hat{X}$ be random configurations with laws $\nu^{\GOE}$ and $\nu^{\mathrm{thin}}$ respectively. Then, for a measurable function $f:  \R \to \C$, we have 
\begin{align*}
    \E \brak{ 
\sum_{(x_1, \dots, x_k) \in \hat{X}^k} f(x_1) \dots f(x_k)
} 
&= \E \brak{ 
\sum_{(x_1, \dots, x_k) \in X^k} \prod_{i=1}^k  f(x_i) Y_i
} 
= \gamma^k \E \brak{ 
\sum_{(x_1, \dots, x_k) \in X^k} \prod_{i=1}^k  f(x_i) 
} \, ,
\end{align*}
where the last equality follows from the independence of the $Y_i$ from each other and from the GOE point process. We then have from \eqref{eqn:def-correlation-fns} that, for any $k \geq 1$, 
\[
\rho_k^{\mathrm{thin}} = \gamma^k \rho_k^{\GOE} \,,
\]
where $\rho_k^{\mathrm{thin}}$ denotes the $k^{\mathrm{th}}$ correlation functions for the thinned GOE point process. Furthermore, it follows that the correlation kernel for the thinned GOE point process is $\gamma K^{\GOE}$. 

Proposition \ref{1.7.prop.bb17} below gives a formula for $\mc{F}_1(s,v)$ in terms of $\mc{F}_2(s,v)$ and a certain integral of the Ablowitz-Segur (AS) solution $u_{\rm{AS}}(\cdot,\gamma)$ to the \textit{Painlev\'e II equation}. Recall from Section~\ref{subsec:uas} that
$u_{\rm{AS}}$ is a  one-parameter family of solutions to 
\begin{align}
    u_{\rm{AS}}(s,\gamma)'' &= x u_{\rm{AS}}(s,\gamma) + 2 u_{\rm{AS}}^3(s, \gamma) \nonumber
\end{align}
with boundary coundition 
\begin{align}
    u_{\rm{AS}}(s, \gamma) &= \sqrt{\gamma} \frac{s^{-1/4}}{2 \sqrt{\pi}}
    e^{-\frac{2}{3}s^{3/2}} (1+ o(1)), \text{ as $s \to \infty$.} \nonumber
\end{align}
Proposition \ref{1.7.prop.bb17} comes from \cite[Proposition~1.1]{BB17}, though in \cite[Remark~1.2]{BB17}, the authors note that the formula can be obtained via some combination of results in \cite{BdCP09}. 

\begin{proposition}[{\cite{BB17}}]
\label{1.7.prop.bb17}
For any $s \in \R$ and $v> 0$, we have 
\begin{align}
    \mc{F}_2(s,v) &= 
    \exp \pth{
    -\int_s^{\infty} (t-s) u_{\rm{AS}}^2(t;\gamma) ~dt
    }
    \label{1.7.BB17.painleve2.connection}
\end{align}
and 
\begin{align}
     \mc{F}_1(s,v) &= \sqrt{\mc{F}_2(s, 2v)} \sqrt{1+\dfrac{\cosh \mu(s, \gamma_2) - \sqrt{\gamma_2} \sinh \mu(s, \gamma_2) -1 }{2- \gamma}} \,,
     \label{1.7.thin.dist.fn.eqn}
\end{align}
where $\gamma$, 
$\mu(s, \gamma_2)$ and $\gamma_2$ are defined as in the statement of Theorem \ref{1.7.rewrite.F_1.corollary}.
\end{proposition}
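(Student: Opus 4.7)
The plan is to establish both identities via the Fredholm determinant/Pfaffian formalism and the Its--Izergin--Korepin--Slavnov (IIKS) Riemann-Hilbert dictionary. Because the thinned GUE and thinned GOE point processes are determinantal and Pfaffian with correlation kernels $\gamma K^{\rm{Ai}}$ and $\gamma K^{\GOE}$ respectively, standard gap-probability arguments express
\[
\mc{F}_2(s,v) = \det\pth{I - \gamma K^{\rm{Ai}}}_{L^2([s,\infty))} , \qquad \mc{F}_1(s,v) = \Pf\pth{J - \gamma K^{\GOE}}_{L^2([s,\infty))}\, ,
\]
where $J$ is the standard symplectic form. Each formula in the proposition then follows from operator identities for the right-hand sides above.

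First I would prove \eqref{1.7.BB17.painleve2.connection}, which is the thinned generalization of the classical Tracy-Widom formula for $F_{\GUE}$. The strategy is to differentiate $\log \mc{F}_2(s,v)$ twice in $s$, use the integrable-kernel form of $K^{\rm{Ai}}$ in terms of $\Ai$ and $\Ai'$, and derive the Tracy-Widom closed system for the resolvent traces. The resulting scalar function satisfies the Painlev\'e II equation $u'' = xu + 2u^3$; the thinning parameter $\gamma$ enters only through the boundary behavior at $+\infty$, since the resolvent trace inherits the decay $\sqrt{\gamma}\,\Ai(x)$ from the kernel itself. Uniqueness of solutions of Painlev\'e II with this prescribed decay identifies the scalar as $u_{\AS}(\cdot;\gamma)$, and integrating twice yields \eqref{1.7.BB17.painleve2.connection}.

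For \eqref{1.7.thin.dist.fn.eqn} I would exploit the factorization of the $2\times 2$ Pfaffian kernel $K^{\GOE}$ in terms of the Airy kernel. Following the Ferrari-Spohn/Rains philosophy, the block structure of $K^{\GOE}$ allows one to write
\[
\Pf\pth{J - \gamma K^{\GOE}}_{L^2([s,\infty))}^{\,2} = \det\pth{I - \gamma_2 K^{\rm{Ai}}}_{L^2([s,\infty))} \cdot \mc{R}(s, \gamma)\, ,
\]
where the ``doubling'' $\gamma \mapsto \gamma_2 = 1 - e^{-2v}$ reflects the identity $1 - (1-\gamma)^2 = \gamma_2$ for the $K_{11}, K_{22}$ subblocks, and $\mc{R}(s,\gamma)$ collects the rank-one corrections from the $K_{12}, K_{21}$ subblocks (which involve $\Ai$ and its antiderivative). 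The factor $\mc{R}(s, \gamma)$ can be evaluated in closed form by expressing the relevant traces as contour integrals and applying the IIKS framework with the $\gamma_2$-Airy kernel; the resolvent traces map under this correspondence to $u_{\AS}(\cdot;\gamma_2)$ and its integral $\mu(s, \gamma_2)$, producing the hyperbolic expression $\cosh\mu - \sqrt{\gamma_2}\sinh\mu$. Matching the prefactor $(2-\gamma)^{-1}$ is then a normalization check, most easily verified by specializing to $\gamma = 1$, where one recovers the classical Tracy-Widom GOE formula $F_{\GOE}(s) = \sqrt{F_{\GUE}(s)}\,e^{-\frac{1}{2}\int_s^\infty u_{\rm{HM}}}$.

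The main obstacle is the derivation of \eqref{1.7.thin.dist.fn.eqn}: controlling the rank-one correction $\mc{R}(s,\gamma)$ cleanly for general $\gamma \in (0,1)$ requires either careful bookkeeping within the IIKS Riemann-Hilbert setup of \cite{BB17}, or a direct operator-algebraic argument through the de~Bruijn-type identities used in \cite{BdCP09}; reconciling the $\gamma_2$-dependence coming from the diagonal blocks with the $\gamma$-dependence of the off-diagonal blocks is the delicate point. For the purposes of this article, since the formula is a black-box input, the cleanest route is to invoke \cite[Proposition 1.1]{BB17} directly, checking only that our conventions for $\mc{F}_1, \mc{F}_2, u_{\AS}, \gamma, \gamma_2$ agree with theirs.
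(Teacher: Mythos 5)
Your closing paragraph is precisely the paper's treatment: there is no proof of this proposition in the paper, only a citation to \cite[Proposition~1.1]{BB17} together with the note that \cite[Remark~1.2]{BB17} attributes the underlying ideas to \cite{BdCP09}. Your preliminary sketch (Tracy--Widom-style differentiation and IIKS machinery for $\mathcal{F}_2$; Pfaffian-squared factorization with the doubling $\gamma_2 = 1-(1-\gamma)^2$ for $\mathcal{F}_1$) is a plausible outline of how BB17 actually proceed, but you correctly judge that re-deriving it is out of scope and that the result should be imported as a black box after checking convention agreement --- which matches the paper exactly.
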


Let 
$F_2(s,v) := \E \brak{\exp \pth{-v \chi^{\rm{Ai}}\pth{ [s, \infty) }
}}$ be
the cumulant generating function of the GUE point process. One of the major technical achievements of \cite{CG18} is given below as Proposition~\ref{prop.thm.1.7.cg.18}, which bounds $F_2(s,v)$ by equating it to $\mc{F}_2(s,v)$ and then using the connection to the Painlev\'e II equation given by \eqref{1.7.BB17.painleve2.connection} to conduct a fine analysis.

\begin{proposition}[{\cite[Theorem~1.7]{CG18}}] \label{prop.thm.1.7.cg.18}
For all $v$ and $s$ in $\R$, we have 
\begin{align}
    F_2(s,v) = \mc{F}_2(s,v) = \exp \bigg( - \int_{-s}^{\infty}(x+s) u_{\AS}^2(x;\gamma)~dx \bigg)\,,
\end{align}
where $\gamma := \gamma(v) =  1-e^{-v}$.
Furthermore, for any fixed $\delta \in (0, \frac{2}{5})$,  as $s$ goes to $\infty$,
\begin{equation}
    \log F_2(-s,s^{\frac{3}{2}-\delta}) \leq -\frac{2}{3\pi}s^{3- \delta} + \mc{O}(s^{3- \frac{13\delta}{11}}).
    \label{prop.thm.1.7.cg.18.eqn}
\end{equation}
\end{proposition}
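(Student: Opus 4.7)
The plan is to handle the equality and the quantitative bound separately, as they require different techniques.

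For the first equality $F_2(s,v) = \mc{F}_2(s,v)$, I would use a thinning interpretation. Writing $N := \chi^{\rm{Ai}}([s,\infty))$ and factoring $e^{-vN} = (e^{-v})^N$, I would interpret $e^{-v} = 1-\gamma$ as the probability that a single Airy point is removed in an independent Bernoulli thinning with keep-probability $\gamma$. Conditional on the Airy configuration, $e^{-vN}$ then equals the probability that every Airy point in $[s,\infty)$ is removed; taking expectation against the Airy measure, $F_2(s,v)$ becomes the unconditional probability that the thinned Airy configuration has no point in $[s,\infty)$, which is exactly $\mc{F}_2(s,v)$. The Painlev\'e II representation then follows from Proposition~\ref{1.7.prop.bb17} after the cosmetic substitution $s \mapsto -s$ in the formula \eqref{1.7.BB17.painleve2.connection}.

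For the quantitative bound, I would start from the identity
\[
-\log F_2(-s,\, v) \;=\; \int_{-s}^{\infty}(x+s)\, u_{\AS}^2(x;\gamma)\,dx,
\qquad v = s^{3/2-\delta},\ \gamma = 1-e^{-v},
\]
and split the integration range using the Bothner parameter $\aleph(x,\gamma) = s^{3/2-\delta}/(-x)^{3/2}$. The portion $x \geq 0$ contributes $\mc{O}(1)$ by the exponential boundary condition \eqref{eqn:uas-bdry}. On $x \in [-s,0]$, I would partition into the Hastings--McLeod region $\aleph \geq \tfrac{2\sqrt 2}{3}$, the Stokes sliver $\aleph \in (\tfrac{2\sqrt 2}{3}-\zeta_0,\tfrac{2\sqrt 2}{3})$, and the regular Boutroux region $\aleph \leq \tfrac{2\sqrt 2}{3}-\zeta_0$, inserting in each region the corresponding asymptotic formula for $u_{\AS}$ from \cite{Bot17}. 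The Hastings--McLeod contribution is controlled by the Hastings--McLeod asymptotic $u_{\rm HM}^2(x) \sim -x/2$; the Boutroux contribution, after the quasi-periodic oscillations of $u_{\AS}$ are averaged against the slowly varying factor $(x+s)$, is the one that produces the stated leading term $\tfrac{2}{3\pi}s^{3-\delta}$.

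The main obstacle is extracting the precise coefficient $\tfrac{2}{3\pi}$ from the Boutroux-region expression of \cite[Theorem~1.10]{Bot17}, which is built from Jacobi theta functions and elliptic integrals: one must carefully average $u_{\AS}^2$ over its quasi-period, track the resulting mean against the window $(x+s)$, and verify that the subleading oscillatory contributions are of strictly smaller order. The cutoff $\zeta_0$ between the Boutroux and Stokes regions must then be tuned $s$-dependently so that the combined contribution of the Stokes sliver (bounded crudely using $u_{\AS} = \mc O((-x)^{1/2})$, as in Lemma~\ref{lem:mu:I21}) and the Boutroux averaging error is absorbed into $\mc{O}(s^{3-13\delta/11})$; the specific exponent $13\delta/11$ is the outcome of this optimization.

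A secondary remark: this GUE-side analysis goes through only because $F_2$ is a single squared integral of a non-negative quantity, so crude size bounds on $u_{\AS}$ suffice in the Stokes sliver. This is precisely why the analogous GOE result of this paper is forced to invoke Conjecture~\ref{conjecture}, since $\mc F_1$ involves the oscillatory factor $\cosh\mu - \sqrt{\gamma_2}\sinh\mu$ where cancellations inside $\mu$, not just upper bounds on $u_{\AS}^2$, become essential.
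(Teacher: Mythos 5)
This proposition is imported verbatim from \cite[Theorem~1.7]{CG18}; the paper cites it without reproducing a proof, so there is no in-paper argument to compare against. Judged on its own terms, your reconstruction is broadly correct: the Bernoulli-thinning identification of $F_2$ with $\mc{F}_2$ is the standard argument (and is the determinantal analogue of the Fredholm-Pfaffian computation this paper carries out for the GOE side in Theorem~\ref{1.7.rewrite.F_1}); the Painlev\'e II representation does then fall out of Proposition~\ref{1.7.prop.bb17} after a sign-convention adjustment; and the leading term $\tfrac{2}{3\pi}s^{3-\delta}$ indeed comes from the quasi-period average of $u_{\AS}^2$ in the regular Boutroux region weighted by $(x+s)$, with the Hastings--McLeod window contributing a smaller-order $\mc{O}(s^{3-4\delta/3})$. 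Your closing remark is also on target: the GUE side is tractable precisely because $\log F_2$ is minus a non-negative integral of $u_{\AS}^2$, whereas the GOE side involves $\mu = \int u_{\AS}$ inside hyperbolic functions, so cancellations, not merely sizes, matter.

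One piece of your sketch is off, though, and it is worth flagging because it inverts where the difficulty actually sits. Since the claimed inequality is $\log F_2(-s,\bar v)\le -\tfrac{2}{3\pi}s^{3-\delta}+\mc{O}(s^{3-13\delta/11})$, one only needs a lower bound on $\int_{-s}^{\infty}(x+s)u_{\AS}^2\,dx$, and because the integrand is non-negative the Stokes sliver can simply be discarded --- no bound on $u_{\AS}$ there is required at all. Your proposal to control the sliver ``crudely using $u_{\AS}=\mc{O}((-x)^{1/2})$, as in Lemma~\ref{lem:mu:I21}'' misreads that lemma: it only covers the regular Boutroux region $\aleph\le \tfrac{2\sqrt2}{3}-\zeta_0$, not the Stokes sliver (which is exactly where \cite{Bot17} provides no asymptotic and why Conjecture~\ref{conjecture} exists). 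Consequently, the suggestion that $\zeta_0$ must be tuned $s$-dependently to balance a Stokes contribution against the Boutroux averaging error is a red herring; the $\mc{O}(s^{3-13\delta/11})$ error is driven entirely by the Taylor-expansion and quasi-period-averaging error terms inside the Boutroux region (the $J_2,J_3$ error control and the $\bar\psi$-Taylor analysis in \cite[Lemmas 6.3--6.5]{CG18}, which also forces the restriction $\delta<2/5$ that is silently weaker than CG18's originally stated $\delta<2/3$). Apart from that misattribution, the outline is sound, though you are candidly relegating the genuinely nontrivial elliptic-function bookkeeping to ``follows from \cite[Theorem~1.10]{Bot17}'' --- which is exactly what CG18's Section~6 supplies.
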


%
\subsection{Fredholm Pfaffians}
\label{subsection.fredholm.pfaffian}
The Fredholm Pfaffian was first defined in \cite{Rai00}; the definition reproduced below comes from \cite{BBCS16}.

\begin{definition} 
\label{def:fred-pf}
Let $\mu$ be a reference measure on $\R$, and 
let $K(x,y)$ be a $2\times 2$ matrix-valued skew-symmetric kernel on $\mathbb{R}^2$. 
Define 
\[
J(x,y) = \mathds{1}_{(x=y)} \begin{pmatrix}
0 & 1 \\ -1 & 0
\end{pmatrix}, ~\forall x, y \in \R\,.
\]
Then the \textbf{Fredholm Pfaffian} of $K$ is defined by the series expansion 
\begin{align}
    \Pf(J+K)_{\bb{L}^2(\R, \mu)} &:=
    1+ \sum_{k=1}^{\infty} \frac{1}{k!} \int_{\R} \cdots \int_{\R} \Pf \pth{K(x_i, x_j)_{i,j=1}^k} d\mu^{\otimes^k}(x_1, \dots x_k)\,, \label{pfaff_fred}
\end{align}
provided that the series converges.
\end{definition}

Let the measure $\nu$ on $(\mathrm{Conf} (\R), \Sigma)$ be a  Pfaffian point process on 
$(\R,  \mc{B}(\R), \mu)$ with matrix kernel $K$, and let $X$ denote a random configuration with law $\nu$ (see Section~\ref{subsection_GOEPP} for definitions of these objects).
For any measurable function $f: \R \to \C$, \cite[Theorem~8.2]{Rai00} gives the identity
\begin{align}
    \E_{\nu} \brak{ \prod_{x \in X} (1 + f(x))} = \Pf(J+K)_{\bb{L}^2(\R, f \mu)}\,,
    \label{exp_prod_pfaff}
\end{align}
whenever both sides converge absolutely.
This identity can be applied to obtain a Fredholm Pfaffian representation for $F_1$. Consider the GOE point process, which  we recall is a Pfafian point process on $(\R, \mc{B}(\R), \mu)$, where $\mu$ denotes the Lebesgue measure. Recall also that we write $(\a_1 > \a_2 > \dots)$ to denote the random configuration of GOE points. For any $s \in \R$ and $v \geq 0$, taking
$
f(x) := e^{-v \mathds{1}(x \geq s)} - 1
$
in \eqref{exp_prod_pfaff} yields
\begin{align}
    F_1(s,v)
    &= \E_{\GOE} \brak{ \prod_{\a_i} e^{-v \mathds{1}(\a_i \geq s)} }
    = \Pf(J+K^{\GOE})_{\bb{L}^2(\R, f\mu)}\,, \label{barF}
\end{align}
provided that the right-hand side above converges absolutely. The absolute convergence is shown in the proof of Theorem~\ref{1.7.rewrite.F_1} below.

\begin{theorem} \label{1.7.rewrite.F_1}
Let $\mc{F}_1(s,v)$ denote the distribution function of the largest particle of the thinned GOE point process $\a_1(\gamma)$ with parameter $\gamma := 1- e^{-v}$, where $s\in \R$ and  $v \geq 0$. Then we have
\begin{align}
    F_1(s,v)  = \Pf(J- \gamma K^{\GOE})_{\bb{L}^2([s, \infty),\mu)} = \mc{F}_1(s,v).
    \label{1.7_F_1_fredholm}
\end{align}
where $\mu$ denotes the Lebesgue measure.
\end{theorem}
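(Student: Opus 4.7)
The plan is to establish both equalities in \eqref{1.7_F_1_fredholm} via two applications of the Rains-type identity \eqref{exp_prod_pfaff}: first to the (unthinned) GOE point process to produce a Fredholm Pfaffian representation of $F_1(s,v)$, and then to the thinned GOE point process to recognize the same Fredholm Pfaffian as the gap probability $\mc{F}_1(s,v)$. Both applications hinge on the same scalar identity $\Pf(cM) = c^k \Pf(M)$ for a $2k \times 2k$ skew-symmetric matrix $M$, which lets us move multiplicative constants in and out of the Pfaffian in \eqref{pfaff_fred}.

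For the first equality, I would proceed from \eqref{barF}, where the choice $f(x) = e^{-v \mathds{1}_{[s,\infty)}(x)} - 1$ already yields $F_1(s,v) = \Pf(J + K^{\GOE})_{\bb{L}^2(\R, f \mu)}$. Since $e^{-v} - 1 = -\gamma$, we have $f(x) = -\gamma \mathds{1}_{[s,\infty)}(x)$, so the measure $f\mu$ equals $-\gamma$ times Lebesgue measure restricted to $[s,\infty)$. In the series \eqref{pfaff_fred}, each $k$-fold integral is therefore supported on $[s,\infty)^k$ and carries a factor $(-\gamma)^k$. Absorbing this factor inside the $2k \times 2k$ Pfaffian replaces $K^{\GOE}$ by $-\gamma K^{\GOE}$ and the reference measure by Lebesgue measure on $[s,\infty)$, giving $F_1(s,v) = \Pf(J - \gamma K^{\GOE})_{\bb{L}^2([s,\infty), \mu)}$.

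For the second equality, the key observation is that $\mc{F}_1(s,v) = \P(\a_1(\gamma) < s) = \P(\chi^{\mathrm{thin}}([s,\infty)) = 0)$, where $\chi^{\mathrm{thin}}$ denotes the thinned GOE point process with parameter $\gamma$. As noted in Section~\ref{subsection.thinned.goe.painleve}, $\chi^{\mathrm{thin}}$ is a simple Pfaffian point process with correlation kernel $\gamma K^{\GOE}$. I would then apply \eqref{exp_prod_pfaff} to $\chi^{\mathrm{thin}}$ with the test function $f(x) = -\mathds{1}_{[s,\infty)}(x)$: the left-hand side collapses to the indicator that no thinned point lies in $[s,\infty)$, whose expectation is $\mc{F}_1(s,v)$, while the right-hand side, after the same scalar rescaling as above, becomes $\Pf(J - \gamma K^{\GOE})_{\bb{L}^2([s,\infty), \mu)}$. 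This matches the Fredholm Pfaffian identified in the previous paragraph.

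The main technical obstacle is verifying the absolute convergence required to justify both applications of \eqref{exp_prod_pfaff}. Concretely, I need a Hadamard-type bound on $\Pf\bigl([\gamma K^{\GOE}(x_i,x_j)]_{i,j=1}^k\bigr)$ that is integrable over $[s,\infty)^k$ and summable in $k$, uniformly in $\gamma \in [0,1]$. The natural route is to combine $\gamma \leq 1$ with the Airy-type decay of the entries of $K^{\GOE}$ at $+\infty$ coming from the explicit expression referenced in Section~\ref{subsection_GOEPP} (e.g.\ \cite[Definition~6.1]{BBCW17}). Once this estimate is in hand, the two applications of \eqref{exp_prod_pfaff} yield both equalities of \eqref{1.7_F_1_fredholm} simultaneously.
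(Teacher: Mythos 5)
Your proposal is correct and follows essentially the same route as the paper: absorb the scalar factor $(-\gamma)^k$ from \eqref{barF} into the Pfaffian to obtain $\Pf(J-\gamma K^{\GOE})_{\bb{L}^2([s,\infty),\mu)}$, then identify that Fredholm Pfaffian with the gap probability of the thinned process, while the absolute convergence is handled by a Hadamard/Stirling estimate (the paper cites \cite[Proposition~4.1(i)]{Lin20} for this). The only cosmetic difference is that you derive $\mc{F}_1(s,v)=\Pf(J-\gamma K^{\GOE})$ via a second application of \eqref{exp_prod_pfaff} to $\chi^{\mathrm{thin}}$, whereas the paper simply invokes the gap-probability formula for a Pfaffian point process directly; these are the same fact.
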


\begin{proof}
We begin by demonstrating the absolute convergence of the right-hand side of \eqref{barF}, which may be expanded as
\begin{align}
    &1+ \sum_{k=1}^{\infty} \frac{1}{k!}
    \int_{\R} \cdots \int_{\R} 
    \Pf \pth{K^{\GOE}(x_i, x_j)}_{i,j=1}^k 
    \prod_{i=1}^k \pth{e^{-v \mathds{1}(x_i \geq s)} -1} d\mu^{\otimes^k}(x_1, \dots, x_k) 
    \nonumber \\
    &=1+ \sum_{k=1}^{\infty} \frac{\pth{e^{-v} -1}^k }{k!}
    \int_{[s, \infty)} \cdots \int_{[s, \infty)} 
    \Pf \pth{K^{\GOE}(x_i, x_j)}_{i,j=1}^k 
    d\mu^{\otimes^k}(x_1, \dots, x_k) \,. \label{absorb const}
\end{align}
Observe that since $v \geq 0$, $\abs{e^{-v}-1} \leq 1$. This along with the bound on $\big|\Pf \pth{K^{\GOE}(x_i, x_j)}_{i,j=1}^k \big|$  given in \cite[Proposition~4.1(i)]{Lin20} allows us to compute
\begin{align}
    &\sum_{k=1}^{\infty} \frac{\big| {\pth{e^{-v} -1}^k } \big|}{k!}
    \int_{[s, \infty)} \cdots \int_{[s, \infty)} 
    \big|\Pf \pth{K^{\GOE}(x_i, x_j)}_{i,j=1}^k \big|
    ~d\mu^{\otimes^k}(x_1, \dots, x_k) 
    \nonumber \\
    &\leq 
    \sum_{k=1}^{\infty} \frac{k^{k/2} C^k}{k!}
    \pth{  
    \int_s^{\infty} e^{-x_i^{3/2}/3} \mathds{1}_{ \{x_i \geq 0 \} }
    + (1-x)^2 \mathds{1}_{\{x < 0\}}  ~d\mu(x)
    }^k  
    \nonumber \\
    &\leq 
    \sum_{k=1}^{\infty} \frac{k^{k/2} C_s^k}{k!} < \infty \,,
\end{align}
where $C$ is a positive constant, $C_s$ is a positive constant depending only on $s$, and the above sum converges due to Stirling's formula. This establishes the Fredholm Pfaffian representation \eqref{barF} of $F_1(s,v)$.

Let us return to the expansion of the Fredholm Pfaffian in \eqref{absorb const}.
From the definition of $\Pf(A)$, we see that scaling every entry of the matrix $A$ by some constant $c$ and taking the Pfaffian is equivalent to $c^k \Pf(A)$, where $A$ is a $2k \times 2k$ matrix. Thus, from \eqref{absorb const}, we find
\begin{align}
    F_1(s,v) &=
    1+ \sum_{k=1}^{\infty} \frac{(-1)^k}{k!}
    \int_{[s, \infty)} \cdots \int_{[s, \infty)} 
    \Pf \pth{\gamma K^{\GOE}(x_i, x_j)}_{i,j=1}^k  d\mu^{\otimes^k}(x_1, \dots, x_k) \nonumber \\
    &= \Pf( J - \gamma K^{\GOE} )_{\bb{L}^2([s, \infty), \mu)} \,. \label{pfaff1.7}
\end{align}
Now, recall from the first paragraph of Section \ref{subsection.thinned.goe.painleve} that the thinned GOE point process is Pfaffian with correlation kernel $\gamma K^{\GOE}$.
Thus, the gap probability for the thinned GOE point process is  
\begin{align}
    \Pf(J- \gamma K^{\GOE})_{\bb{L}^2([s, \infty),\mu)} 
    &= \P(\a_1(\gamma) < s) \nonumber =: \mc{F}_1(s,v). 
\end{align}
Substituting this into \eqref{pfaff1.7} yields \eqref{1.7_F_1_fredholm} \,.
\end{proof}

\subsection{Proofs of Theorems \ref{1.7.rewrite.F_1.corollary} and \ref{1.7.final}}
\label{section.1.7.final.proof}
We are now ready to prove Theorem~\ref{1.7.rewrite.F_1.corollary}. Assuming Lemma~\ref{1.7.mu_est}, we will then be able to prove Theorem~\ref{1.7.final} as well. Lemma~\ref{1.7.mu_est} is proved in Section \ref{proof_of_mu_est} below.

\begin{proof}[Proof of Theorem~\ref{1.7.rewrite.F_1.corollary}]
Equation \eqref{1.7_F_1}  follows immediately from \eqref{1.7.thin.dist.fn.eqn}, Proposition~\ref{prop.thm.1.7.cg.18}, and 
Theorem~\ref{1.7.rewrite.F_1}.
\end{proof}

\begin{proof}[Proof of Theorem~\ref{1.7.final}]
Fix any $\delta \in (0,2/5)$. Take $v$ to be $\bar{v}$ (so that $\gamma = 1-e^{-\bar{v}}$ and $\gamma_2$ is equal to~$\bar{\gamma}$) in \eqref{1.7_F_1} 
. This yields
\begin{align}
F_1 \pth{ -s,\frac{1}{2}s^{3/2- \delta}}
    &= \sqrt{F_2(-s, s^{3/2- \delta})} 
    \sqrt{1 + \dfrac{\cosh \mu(-s, \bar{\gamma}) - \sqrt{\bar{\gamma}} \sinh \mu(-s, \bar{\gamma}) -1 }{2- \gamma}} \, . \label{1.7_F_1_new_v}
\end{align}
 Equation \eqref{prop.thm.1.7.cg.18.eqn} gives the following bound as $s \to \infty$:
\begin{align}
    \sqrt{F_2(-s, s^{3/2- \delta})}  
    &\leq
    \sqrt{\exp \pth{
    -\frac{2}{3 \pi} s^{3-\delta}+ \mc{O} \pth{ s^{3- \frac{13\delta}{11}} }
    }} 
    = \exp \pth{
    -\frac{1}{3 \pi}s^{3-\delta} + \mc{O} \pth{ s^{3- \frac{13\delta}{11}}}
    } \, .
    \label{1.7.mainterm}
\end{align}
Since $\bar{\gamma} \in (0,1]$ and $2-\gamma \in [1,2)$, the second term on the right-hand side of \eqref{1.7_F_1_new_v} may be crudely bounded above as $s \to \infty$ by
\[
\sqrt{C_1+C_2\exp\pth{\abs{\mu(-s, \bar{\gamma})}}}\,,
\]
for some positive constants $C_1$ and $C_2$ (independent of $s$ and $\delta$). From Lemma~\ref{1.7.mu_est} and the above display, we find that as $s \to \infty$,
\begin{equation}
    \sqrt{1 + \dfrac{\cosh \mu(-s, \bar{\gamma}) - \sqrt{\bar{\gamma}} \sinh \mu(-s, \bar{\gamma}) -1 }{2- \gamma}}
    = o(s^{3-\delta})\,.
    \label{1.7.muterm}
\end{equation}
Substituting the bounds given by  \eqref{1.7.mainterm} and \eqref{1.7.muterm} into  \eqref{1.7_F_1_new_v} yields \eqref{1.7.final.eqn}. 
\end{proof}

\subsection{Proof of Lemma~\ref{1.7.mu_est}} \label{proof_of_mu_est}
The proof of Lemma~\ref{1.7.mu_est} is given at the end of this subsection.

Throughout this subsection, as in the statement of Lemma \ref{1.7.mu_est}, we take $\delta \in (0,2/5)$ fixed. The parameter $s$ is taken to be positive, and we define $\bar{v}:= \bar{v}(s,\delta)$ 
and $\bar{\gamma} := \bar{\gamma}(s,\delta)$ as in~\eqref{def:bar-v-gamma}. Note that
$\bar{\gamma}= 1-e^{-2\bar{v}}$.
For some fixed constants $x_0> 0$ and $\zeta_0 \in (0,2\sqrt{2}/3)$ to be specified later, 
we will consider upper bounds on $u_{\AS}(x; \bar{\gamma})$ over each of the following  intervals of $x$:
\begin{enumerate}
    \item $[-s, -(\frac{2\sqrt{2}}{3}- \zeta_0 )^{-2/3} s^{1- \frac{2}{3}\delta}]$.
    \item $\big( \! \!- \!(\tfrac{2\sqrt{2}}{3}- \zeta_0 )^{-2/3} s^{1- \frac{2}{3}\delta}, -( \tfrac{2\sqrt{2}}{3})^{-2/3} s^{1-\frac{2}{3}\delta} \big ) =: \mathbf{I}_0$
    \item $\big[ \! - \!(\frac{2\sqrt{2}}{3})^{-2/3} s^{1-\frac{2}{3}\delta}, -x_0 \big)$
    \item $[-x_0, \infty)$, 
\end{enumerate}

Consider $\bar{\aleph}:= \aleph(x, \bar{\gamma})$ (where $\aleph(x,\gamma)$ was defined for general $\gamma \in [0,1)$ in  \eqref{def:aleph}).
The interval~$(1)$ corresponds to 
$\bar{\aleph} \in \bar{I}_1(\zeta_0) := [s^{-\delta}, \frac{2\sqrt{2}}{3}-\zeta_0]$, which we recall from Section~\ref{subsec:uas} is contained in the regular Boutroux region $I_1(\zeta_0) := (0, \frac{2\sqrt{2}}{3}-\zeta_0) $.
\cite[Theorem~1.10]{Bot17} gives an expansion for $u_{\AS}(x; \gamma)$ (for general $x$ and $\gamma$ such that $\aleph \in I_1(\zeta_0)$) in terms of Jacobi theta functions and elliptic integrals. 
In \cite[Section~6]{CG18}, the authors manipulate the formula from \cite[Theorem~1.10]{Bot17} into a form that is more amenable to obtaining the estimates that they seek. In our case, we only seek crude upper bounds on $u_{\AS}$, 
for which \cite[Theorem~1.10]{Bot17} and the work of \cite[Section~6]{CG18} can be combined to obtain an  upper bound of order $(-x)^{1/2}$ on $u_{\AS}(x;\gamma)$ uniformly over $\aleph \in I_1(\zeta_0)$.

\begin{lemma}
\label{lem:mu:I21}
For some constant $\zeta_0 \in (0, 2\sqrt{2}/3)$, there exists constants $S_0 > 0$ and $C>0$ such that for all $s \geq S_0$ and for all $\aleph \in  I_{1}(\zeta_0)$, 
we have 
\begin{align}
    \abs{u_{\AS}(x; \bar{\gamma})} \leq C(-x)^{1/2} \,.
    \label{eqn:pre-stokes}
\end{align}
\begin{proof}
In what follows, we rely heavily on the notation set forth at the start of \cite[Section~6.1]{CG18}--- since this notation is used only in the present proof, which is rather short, we do not redefine their notation here.
From equations~$6.1$ and~$6.2$ of
\cite{CG18}\footnote{While \cite[Proposition~6.1]{CG18} is stated for $\zeta \in (0, \sqrt{2}/3)$, it is written in a footnote that the result holds for all $\zeta \in (0,2\sqrt{2}/3)$, simply because \cite[Equation~1.26]{Bot17} holds for this wider range of $\zeta$, and \cite[Equation~6.1,6.2]{CG18} is a reformulation of \cite[Equations~1.25, 1.26]{Bot17}.} (which is a reformulation of Equations~$1.25$ and~$1.26$ of \cite{Bot17}), we see that it suffices to find appropriate bounds on 
\begin{align}
    \frac{1-\kappa}{\sqrt{1+\kappa^2}} \,, 
    \quad 
    \text{ and }
    \quad 
    \rm{cd}\pth{2(-x)^{3/2}VK\pth{\tilde{\kappa}} , \tilde{\kappa}} \,,
    \label{eqn:pre-stokes-goal}
\end{align}
where we define $\tilde{\kappa}:= \frac{1-\kappa}{1+\kappa}$. 
It follows from \cite[Equations~6.3,~6.4]{CG18} that $\kappa(\aleph)$ is bounded uniformly
over bounded regions of  $\aleph$, and so $\frac{1-\kappa}{\sqrt{1+\kappa^2}}$ is bounded uniformly over $\aleph \in I_1(\zeta_0)$.

Next, \cite[Equation~6.9]{CG18} implies that there exist $r_0 \in [0, 1)$ and $C_1 >0$ such that for all $r \leq r_0$,
\begin{align}
    \abs{\mathrm{cd}(z,r)} \leq 1+ C_1 r^2 \,.
    \label{eqn:pre-stokes-cd-bd}
\end{align}
It is shown in the proof of Lemma~6.3 of \cite{CG18} that $\tilde{\kappa}$ goes to zero as $\aleph$ goes to zero, and so there exists $\zeta_0$ sufficiently close to $2\sqrt{2}/3$ such that for all $\aleph \in (0, \frac{2\sqrt{2}}{3}-\zeta_0]$, we have $\tilde{\kappa} \leq  r_0$. 
Then from~\eqref{eqn:pre-stokes-cd-bd}, we have
\begin{align*}
    \abs{\rm{cd}\pth{2(-x)^{3/2}VK\pth{\tilde{\kappa}} , \tilde{\kappa}}} \leq  C_2,
\end{align*}
for some $C_2 >0$. Thus, both terms in \eqref{eqn:pre-stokes-goal} are bounded uniformly over $\aleph \in I_1(\zeta_0)$. Equation~\eqref{eqn:pre-stokes} then follows from \cite[Proposition~6.1]{CG18}.
\end{proof}
\end{lemma}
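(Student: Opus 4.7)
The plan is to leverage the reformulation in \cite[Section~6.1]{CG18} of the asymptotic expansion from \cite[Theorem~1.10]{Bot17}. Specifically, for $\aleph \in I_1(\zeta)$, equations (6.1)--(6.2) of \cite{CG18} express $u_{\AS}(x;\gamma)$ as the scaling factor $(-x)^{1/2}$ multiplied by a product of two pieces: an algebraic factor depending on $\kappa = \kappa(\aleph)$, such as $(1-\kappa)/\sqrt{1+\kappa^2}$, and a Jacobi elliptic function $\mathrm{cd}(2(-x)^{3/2} V K(\tilde\kappa), \tilde\kappa)$ with modulus $\tilde\kappa := (1-\kappa)/(1+\kappa)$. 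Since the $(-x)^{1/2}$ factor is explicit, the lemma reduces to showing that the two remaining pieces can be simultaneously bounded by a single constant, uniformly over $\aleph \in I_1(\zeta_0)$, for some suitable $\zeta_0 \in (0, 2\sqrt{2}/3)$ to be chosen.

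First I would dispose of the algebraic factor. Its parameter $\kappa$ is defined implicitly by $\aleph$ through \cite[Equations~6.3--6.4]{CG18}, from which one reads off that $\kappa$ depends continuously on $\aleph$ and stays in a compact subset of $(0,1)$ on any compact subinterval of $(0, 2\sqrt{2}/3)$. Since $\kappa \mapsto (1-\kappa)/\sqrt{1+\kappa^2}$ is continuous on such a set, the algebraic factor is automatically uniformly bounded on $I_1(\zeta_0)$ for any $\zeta_0 > 0$, requiring no special choice.

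The main obstacle is controlling the Jacobi elliptic factor $\mathrm{cd}(z, \tilde\kappa)$, whose magnitude can blow up as the modulus $\tilde\kappa \to 1^-$. The idea is to use the small-modulus expansion of $\mathrm{cd}$ (compare \cite[Equation~6.9]{CG18}), which yields $|\mathrm{cd}(z, r)| \leq 1 + C_1 r^2$ uniformly in $z \in \R$ whenever $r \leq r_0$ for some fixed $r_0 < 1$. I would therefore choose $\zeta_0$ so that $\tilde\kappa(\aleph) \leq r_0$ throughout $I_1(\zeta_0)$. Since $\tilde\kappa$ depends continuously on $\aleph$ with $\tilde\kappa(\aleph) \to 0$ as $\aleph \to 0$ (a fact established inside the proof of \cite[Lemma~6.3]{CG18}), such a choice is possible: taking $\zeta_0$ sufficiently close to $2\sqrt{2}/3$ shrinks $I_1(\zeta_0)$ into a neighborhood of $0$ on which $\tilde\kappa$ is automatically small. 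Combining this uniform bound with the algebraic estimate from the previous step and absorbing constants then yields $|u_{\AS}(x;\bar\gamma)| \leq C(-x)^{1/2}$ uniformly over $\aleph \in I_1(\zeta_0)$, as claimed. The key subtlety is that the bound on $\mathrm{cd}$ must be uniform in its real argument $z = 2(-x)^{3/2} V K(\tilde\kappa)$, which is only possible via the small-modulus regime, since pointwise in $z$ the function can oscillate or blow up for larger moduli.
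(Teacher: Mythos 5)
Your proposal is essentially the paper's own proof: both use the CG18 Section 6.1 reformulation of Bothner's Theorem 1.10, bound the algebraic factor via the boundedness of $\kappa(\aleph)$ from \cite[Equations~6.3--6.4]{CG18}, and control the Jacobi elliptic factor via the small-modulus bound \cite[Equation~6.9]{CG18} after choosing $\zeta_0$ close enough to $2\sqrt{2}/3$ so that $\tilde\kappa$ stays small. The only minor difference is one of emphasis: you correctly flag the uniformity in the real argument $z$ as the key subtlety, which the paper leaves implicit.
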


Taking $\zeta_0$ as in Lemma~\ref{lem:mu:I21}, it follows from \eqref{eqn:pre-stokes} that
\begin{align}
    \bigg | \int_{-s}^{-(\frac{2\sqrt{2}}{3}- \zeta_0 )^{-2/3} s^{1- \frac{2}{3}\delta}} u_{\AS}(x; \bar{\gamma}) ~dx \bigg | = \bigg | \int_{\bar\aleph \in \bar{I}_1(\zeta_0)} u_{\AS}(x; \bar{\gamma}) ~dx \bigg | \leq \mathcal{C}_1 s^{3/2}\,,
    \label{eqn:mu:1}
\end{align}
for some positive constant $\mc{C}_1$.

Interval $(2)$ corresponds to $\bar{\aleph}$ in the Stokes region $( \frac{2\sqrt{2}}{3} - \zeta, \frac{2\sqrt{2}}{3} )$, defined in Section~\ref{subsec:uas}.
Since $\mathbf{I}_0$ has length of order $s^{1- \frac{2}{3}\delta}$,
equation~\eqref{eqn:conjecture} of Conjecture~\ref{conjecture} implies that 
\begin{align}
    \int_{\mathbf{I}_0} \abs{u_{\AS}(x;\bar{\gamma})} ~dx =
    \int_{\bar{\aleph} \in ( \frac{2\sqrt{2}}{3} - \zeta, \frac{2\sqrt{2}}{3} )} \abs{u_{\AS}(x;\bar{\gamma})} ~dx
    = o(s^{3-\delta})\,. 
    \label{eqn:mu:2stokes}
\end{align}

Interval $(3)$ corresponds to 
$\bar{\aleph} \in \bar{I}_2  := [\tfrac{2\sqrt{2}}{3}, x_0^{-3/2}s^{\frac{3}{2}-\delta})
$, which we recall from Section~\ref{subsec:uas} is contained in the Hastings-McLeod region $I_2 := [\frac{2\sqrt{2}}{3},\infty)$. Over this region, we have \cite[Theorem~1.12]{Bot17}, reformulated below as Proposition~\ref{prop:Bot17-thm1.12}.

\begin{proposition}[{\cite[Theorem~1.12]{Bot17}}\footnote{It may be helpful to match the notation of \cite{Bot17} with ours. 
We have taken the parameter $f_2$ of \cite{Bot17} to be $0$. For any $\gamma \in [0,1)$, the function  $u(x|s):= u(x | (s_1,s_2,s_3))$ of \cite{Bot17} is  equal to $u_{\AS}(x; 
\gamma)$ in the special case $s = (-i \sqrt{\gamma}, 0, i \sqrt{\gamma})$, as stated in \cite[Remark~1.6]{Bot17}. The quantity $\ep$ of \cite{Bot17} is defined as $\rm{sgn}(\Im s_1)$, which is equal to $-1$ in our case. The parameter $v$ of \cite{Bot17} is also written here as $v$.
The parameter
$\aleph$ of \cite{Bot17} is defined in \cite[Equation~1.21]{Bot17} as $v (-x)^{-3/2}$, which, for $v = - \log (1-\gamma)$, matches our definition of $\aleph$.}] 
\label{prop:Bot17-thm1.12}
There exist positive constants $x_0$, $v_0$, and $c$ such that for all $-x \geq x_0$, $v:= - \log (1- \gamma) \geq v_0$, and $\aleph \in I_2$, we have 
\begin{align}
    u_{\AS}(x; \bar{\gamma}) = 
    - \sqrt{- \frac{x}{2}} 
    \Big (
    1 - \frac{e^{\frac{2}{3}\sqrt{2}(-x)^{-3/2} - v}}{\pi ( -x)^{3/4} 2^{5/4}} + J_2(x,s)
    \Big )\, ,
    \label{eqn:bot17-thm1.12}
\end{align}
where $|J_2(x,s)| \leq c(-x)^{-3/2}$. 
\end{proposition}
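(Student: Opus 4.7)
My plan is to prove Proposition~\ref{prop:Bot17-thm1.12} by applying the Deift--Zhou nonlinear steepest descent method to the Riemann--Hilbert (RH) formulation of the Ablowitz--Segur solution, paying careful attention to how the parameter $v = -\log(1-\bar\gamma)$ is allowed to grow together with $-x$ so long as $\aleph = v(-x)^{-3/2} \geq \tfrac{2\sqrt{2}}{3}$. The starting point is the well-known fact that $u_{\AS}(x;\gamma)$ can be recovered from a $2\times 2$ RH problem with Stokes data $s = (-i\sqrt{\gamma},0,i\sqrt{\gamma})$: one looks for $Y(\zeta)$ analytic off six rays emanating from the origin, with prescribed Stokes jumps on the rays and with $Y(\zeta) = (I + Y_1/\zeta + O(\zeta^{-2}))\,e^{-i(\frac{4}{3}\zeta^3+x\zeta)\sigma_3}$ as $\zeta \to \infty$; then $u_{\AS}(x;\gamma) = 2i (Y_1)_{12}$. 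I would rescale $\zeta = (-x/2)^{1/2}\lambda$ so that the phase $\Theta(\lambda) := \frac{4}{3}\zeta^3 + x\zeta$ becomes $(-x)^{3/2}(\frac{2\sqrt{2}}{3}\lambda^3 - \sqrt{2}\lambda)$, whose saddle points $\lambda = \pm 2^{-1/2}$ drive the analysis.

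The next step is to introduce a $g$-function $g(\lambda)$ that normalizes $Y$ at infinity and controls the sign of $\Re(i\Theta)$ along the deformed contours. In the Hastings-McLeod region $\aleph \geq \tfrac{2\sqrt{2}}{3}$, a one-cut $g$-function supported on the segment $[-2^{-1/2}, 2^{-1/2}]$ suffices, in contrast to the two-cut/Boutroux $g$-function needed in the regular Boutroux region $I_1$; this one-cut structure is exactly what makes the pure Hastings--McLeod regime tractable and persists here because $v$ is large enough to push the Stokes factors into the lenses without creating additional bands. After conjugating by $e^{g\sigma_3}$, opening lenses around $[-2^{-1/2}, 2^{-1/2}]$ using the standard triangular factorizations of the jump matrices, and building an outer parametrix $P^{(\infty)}(\lambda)$ solving the model RH problem on this arc together with Airy parametrices $P^{(\pm)}$ in small discs around the endpoints $\pm 2^{-1/2}$, one is reduced to a small-norm RH problem for the error $R(\lambda) := Y(\lambda)\,[P(\lambda)]^{-1}$, where $P$ is the global parametrix. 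The jump matrix for $R$ is $I + \Delta(\lambda)$, with $\Delta$ supported on the lens boundaries and the parametrix disc boundaries; on the lens boundaries the off-diagonal entries of $\Delta$ decay like $e^{-c(-x)^{3/2}}$, while on the disc boundaries $\Delta = O((-x)^{-3/2})$ from the Airy matching.

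The crucial new ingredient beyond the classical Hastings--McLeod analysis is tracking where the finite parameter $v \neq \infty$ leaves its signature. I expect the exponential term $e^{\frac{2}{3}\sqrt{2}(-x)^{3/2} - v}/(\pi(-x)^{3/4} 2^{5/4})$ in \eqref{eqn:bot17-thm1.12} to arise from the contribution to $\Delta$ along one of the unbounded rays of the original Stokes contour that, after the $g$-function deformation, survives as an exponentially small but nonzero perturbation, weighted by the Stokes multiplier $\pm i\sqrt{\gamma} = \pm i\sqrt{1-e^{-v}}$; on the Hastings--McLeod side one has $v=\infty$ and this contribution vanishes identically, whereas for $\aleph$ near $\tfrac{2\sqrt{2}}{3}$ it becomes the dominant correction. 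The main technical obstacle will be verifying uniform smallness of $\Delta$ over the entire range $\aleph \in [\tfrac{2\sqrt{2}}{3},\infty)$: one must check that the lens boundaries can be chosen so that $\Re(i\Theta - 2g)$ has a strictly positive lower bound depending only on $\aleph \geq \tfrac{2\sqrt{2}}{3}$, and that the Airy parametrix matching holds with error $O((-x)^{-3/2})$ uniformly in $v$. Once this is established, extracting $u_{\AS}$ from the $\lambda^{-1}$ coefficient of $R\,P$ at infinity yields \eqref{eqn:bot17-thm1.12} with $J_2(x,s) = O((-x)^{-3/2})$ by the standard small-norm argument $\|R-I\|_{L^\infty} \lesssim \|\Delta\|_{L^2\cap L^\infty}$.
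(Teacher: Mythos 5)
First, a point of orientation: the paper does not prove this proposition at all --- it is imported verbatim from Bothner's Theorem~1.12 in \cite{Bot17} (the only work done in the paper is the notational dictionary in the footnote, matching $s=(-i\sqrt{\gamma},0,i\sqrt{\gamma})$, $\ep=-1$, $v=-\log(1-\gamma)$, $\aleph = v(-x)^{-3/2}$). So what you have written is an attempt to reconstruct Bothner's Riemann--Hilbert analysis from scratch, not an alternative to anything in this paper. Your outline does follow the right general framework (RH problem for Painlev\'e II with the given Stokes data, rescaling to put the saddles at $\pm 2^{-1/2}$, $g$-function, lenses, Airy parametrices, small-norm error analysis), which is indeed the skeleton of the argument in \cite{Bot17}.

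However, as a proof the proposal has a genuine gap precisely where the theorem is nontrivial. Your plan rests on a uniform small-norm estimate, with jump perturbations on the lens boundaries decaying like $e^{-c(-x)^{3/2}}$ and a ``strictly positive lower bound'' on the relevant phase over all $\aleph \geq \tfrac{2\sqrt{2}}{3}$. No such uniform positive bound exists at the edge of $I_2$: the finite-$v$ contribution is controlled by $e^{\frac{2\sqrt{2}}{3}(-x)^{3/2}-v} = e^{(\frac{2\sqrt{2}}{3}-\aleph)(-x)^{3/2}}$, which equals $1$ when $\aleph = \tfrac{2\sqrt{2}}{3}$, so there the second term in \eqref{eqn:bot17-thm1.12} is only $O((-x)^{-3/4})$ --- strictly larger than the error bound $|J_2|\leq c(-x)^{-3/2}$. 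Hence this term cannot be absorbed into a small-norm remainder; it must be extracted explicitly (with the exact constant $\pi^{-1}2^{-5/4}$) via a first-order expansion of the error problem, i.e.\ a computation of the leading Neumann-series/residue contribution of the non-negligible piece of the jump, uniformly as $\aleph \downarrow \tfrac{2\sqrt{2}}{3}$. Your sketch only ``expects'' this term to appear and never sets up that computation. Relatedly, the mechanism you name is off: you attribute the correction to a ray weighted by the Stokes multiplier $\pm i\sqrt{\gamma}=\pm i\sqrt{1-e^{-v}}$, but $\sqrt{\gamma}\to 1$ in this regime and is not small; the term in \eqref{eqn:bot17-thm1.12} carries the weight $e^{-v}=1-\gamma=1-s_1s_3$, which enters through the residual band-type jump combination $1-s_1s_3$ after lens opening, not through a single multiplier $\sqrt{\gamma}$. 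Without the uniform treatment near the boundary of $I_2$ and the explicit first-order extraction of the $e^{-v}$ term, the proposal does not yield \eqref{eqn:bot17-thm1.12}; for the purposes of this paper the correct move is simply to cite \cite[Theorem~1.12]{Bot17}, as the paper does.
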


Take $\gamma = \bar{\gamma}$ in Proposition~\ref{prop:Bot17-thm1.12} so that $v = 2\bar{v}$ (where $\bar{v}$ was defined at the start of this subsection), and let $x_0$ be as in the proposition. Consider $S_0:= S_0(\delta)$  such that $S_0^{1- \frac{2}{3}\delta} > x_0$ and $S_0^{ \frac{3}{2}-\delta} \geq v_0$. 
Then for any $s \geq S_0$ and $x$ in interval (3) (equivalently, $\bar{\aleph} \in \bar{I}_2$), we have $-x \geq x_0$ and $2\bar{v} \geq v_0$. Thus, the hypotheses of the proposition are satisfied, and so  there exists a constant $C:= C(\delta)> 0$ (independent of the choice of $\bar{\aleph} \in \bar{I}_2$) such that 
$
    \abs{u_{\AS}(x; \bar{\gamma})} \leq C (-x)^{1/2}
$. Thus,  there exists a  constant $\mathcal{C}_2:= \mathcal{C}_2(\delta)> 0$ such that 
\begin{align}
    \bigg | \int_{-(\frac{2\sqrt{2}}{3})^{-2/3}s^{1- \frac{2}{3}\delta}}^{-x_0}
    u_{\AS}(x; \bar{\gamma})
    \bigg | 
    =
    \bigg | \int_{\bar{\aleph} \in \bar{I}_2}
    u_{\AS}(x; \bar{\gamma})
    \bigg | 
    \leq \mc{C}_2 s^{\frac{3}{2} - \delta} \, .
    \label{eqn:mu:3}
\end{align}

Finally, consider interval $(4)$.
For any fixed $x_0$, the integral of $u_{\AS}(x;\bar{\gamma})$ over $x$ in interval $(4)$  evaluates to a constant due to the exponential decay in~\eqref{eqn:uas-bdry}. That is, there exists a positive constant $\mc{C}_3$ such that 
\begin{align}
    \abs{\int_{-x_0}^{\infty} u_{\AS}(x;\bar{\gamma})~dx} = \mc{C}_3\,.
    \label{eqn:mu:x0}
\end{align}
We are now ready to prove Lemma~\ref{1.7.mu_est}. 

\begin{proof}[Proof of Lemma~\ref{1.7.mu_est}]
Equation~\eqref{1.7.mu_est.eqn.weak} follows immediately from \eqref{eqn:mu:1}, \eqref{eqn:mu:3}, and \eqref{eqn:mu:x0}. Equation~\eqref{1.7.mu_est.eqn} follows from the additional input \eqref{eqn:mu:2stokes}.
\end{proof}

\section{Proof of Theorem~\ref{1.4.final}}
\label{section.thm.1.4.proof}
The proof of Theorem~\ref{1.4.final} was sketched in Section~\ref{subsec:intro-GOEPP}, starting from \eqref{eqn:lowerdev-markov-1}. Here, we give a complete proof.
The following corollary follows from Theorem~\ref{1.7.rewrite.F_1} and a less precise formulation of \cite[Theorem~1.4]{BB17}, which
states that $\log \mc{F}_1(-s,v)$ is given by the right-hand side of  \eqref{eqn:bb17-fixed} (and thus, by Theorem~\ref{1.7.rewrite.F_1}, the same is true for $F_1(-s,v)$).
\begin{corollary}[{\cite[Theorem~1.4]{BB17}}]
\label{cor:bb17-fixed}
Fix $\gamma \in [0,1)$ and define $v := - \log(1-\gamma) \in [0,\infty)$. There exist positive constants $S_0 := S_0(\gamma)$ such that for all $s \geq S_0$, we have 
\begin{align}
    \log F_1(-s, v) = -\frac{2}{3\pi}vs^{3/2} + \frac{v^2}{2\pi^2} \log(8 s^{3/2}) + \mc{O}(1)\,.
    \label{eqn:bb17-fixed}
\end{align}
\end{corollary}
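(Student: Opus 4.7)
The plan is to deduce the corollary directly from the two inputs already cited: Theorem~\ref{1.7.rewrite.F_1}, which identifies the cumulant generating function $F_1(s,v)$ with the distribution function $\mc{F}_1(s,v)$ of the largest particle of the thinned GOE point process with parameter $\gamma = 1 - e^{-v}$; and \cite[Theorem~1.4]{BB17}, which supplies the asymptotic expansion of $\log \mc{F}_1(-s,v)$ for fixed $\gamma \in [0,1)$ as $s \to \infty$. Since $\gamma$ is now held fixed (so in particular $v$ is fixed and independent of $s$), we are in exactly the setting considered in \cite{BB17}.

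First, I would invoke Theorem~\ref{1.7.rewrite.F_1} with $s$ replaced by $-s$: this gives the exact identity
\[
F_1(-s, v) = \mc{F}_1(-s, v),
\]
valid for every $s \in \R$ and $v \geq 0$. Hence it suffices to obtain the claimed expansion for $\log \mc{F}_1(-s,v)$.

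Second, I would quote the relevant asymptotic result of Baik--Bothner. Their \cite[Theorem~1.4]{BB17} establishes, for each fixed $\gamma \in [0,1)$, an asymptotic expansion of $\log \mc{F}_1(-s,v)$ as $s \to \infty$ whose leading terms are exactly
\[
-\frac{2}{3\pi} v s^{3/2} + \frac{v^2}{2\pi^2} \log\bigl(8 s^{3/2}\bigr) + \mc{O}(1),
\]
with an error uniform in $s \geq S_0(\gamma)$. Combining this with the identity from the previous step yields~\eqref{eqn:bb17-fixed}. No further analytic work is required; the only small bookkeeping point is to verify that the constant implicit in the $\mc{O}(1)$ remainder is allowed to depend on $\gamma$ (equivalently on $v$), which is indeed the convention of the statement since $S_0 = S_0(\gamma)$.

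There is no real obstacle here — the content lies entirely in Theorem~\ref{1.7.rewrite.F_1} (proved earlier in the paper by the Fredholm Pfaffian manipulation) and in \cite[Theorem~1.4]{BB17} (whose proof uses Riemann--Hilbert analysis of the Ablowitz--Segur solution at fixed $\gamma$). The role of Corollary~\ref{cor:bb17-fixed} in this paper is to package these two inputs into the precise form needed when substituting into the Markov bound~\eqref{eqn:lowerdev-markov-1} in the proof of~\eqref{eqn:1.4.weak} of Theorem~\ref{1.4.final}; the $v s^{3/2}$ leading term is exactly what matches the mean asymptotic $\frac{2}{3\pi} s^{3/2}$ of $\chi^{\GOE}(\fk{B}_1(s))$ from Theorem~\ref{1.3_anal_prop} and produces the $(\tfrac{2}{3\pi} - c) v s^{3/2}$ cancellation needed there.
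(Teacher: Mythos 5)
Your proposal is correct and matches the paper's own argument exactly: the statement immediately preceding the corollary explains that it follows by combining the identity $F_1(-s,v) = \mc{F}_1(-s,v)$ from Theorem~\ref{1.7.rewrite.F_1} with the fixed-$\gamma$ asymptotic expansion of $\log \mc{F}_1(-s,v)$ from \cite[Theorem~1.4]{BB17}. Nothing further is needed.
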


\begin{proof}[Proof of Theorem~\ref{1.4.final}]
Fix $\eta >0$, $c>0$, and $\delta \in (0,2/5)$.
For brevity, we write $\mc{A}$ to denote the event 
\[
    \mc{A} := \left \{
    \chi^{\GOE}[-s,\infty) - \E[ \chi^{\GOE}([-s,\infty))] \leq -cs^{3/2}
    \right \}.
\]
For any  $\ld >0$, taking $f(x) = e^{-\ld x}$ in Markov's inequality gives the upper-bound
\begin{align}
    \P(\mc{A}) &\leq \exp \pth{- c \lambda s^{3/2} + \lambda \E \brak{\chi^{\GOE}([-s,\infty))}}
    \E \brak{ \exp \pth{ - \lambda \chi^{\GOE} ([-s,\infty))}} \nonumber \\
    &= \exp \pth{ -c \lambda s^{3/2} 
    + \frac{2}{3\pi} \lambda s^{3/2}  
    + \lambda D_1(s) } F_1(-s, \lambda)
    \label{1.4_anal_1.7_weakMarkov} \,, 
\end{align}
where \eqref{1.4_anal_1.7_weakMarkov} follows from the substitution of \eqref{1.3_exp}. 
Taking $\lambda = 2 \eta /c$ and substituting~\eqref{eqn:bb17-fixed} into~\eqref{1.4_anal_1.7_weakMarkov} yields
\begin{align*}
     \P(\mc{A}) &\leq \exp \pth{ -2 \eta s^{3/2} + \mc{O}(\log s) } \leq \exp \pth{ - \eta s^{3/2}}\,,
\end{align*}
where the last inequality holds for all $s$ sufficiently large (depending on $\eta$ and $c$). Thus, we have \eqref{eqn:1.4.weak}.

Now, assume Conjecture~\ref{conjecture}. Then taking $\lambda = \frac{1}{2} s^{\frac{3}{2}-\delta}$ in \eqref{1.4_anal_1.7_weakMarkov}
gives
\begin{align*}
     \P(\mc{A}) &\leq
    \exp \pth{ - \frac{1}{2}c s^{3-\delta} 
    + \frac{1}{3\pi}  s^{3-\delta}  
    + \frac{1}{2} s^{\frac{3}{2}-\delta} D_1(s)} 
    F_1 \pth{-s, \frac{1}{2} s^{\frac{3}{2}-\delta}}\,.
\end{align*}
Substituting the bound of Theorem~\ref{1.7.final} into the above yields  equation \eqref{1.4.final.eq}.
\end{proof}

\section{Proof of Theorem~\ref{1.5_anal}}
\label{section.thm.1.5.}

We now prove Theorem~\ref{1.5_anal}. Our method of proof necessarily differs from the GUE case of \cite{CG18}, which benefits from the Airy kernel being a  \textit{locally admissible} and \textit{good} \textit{trace-class operator} (see \cite[Section~4.2]{AGZ10}). 
For such kernels, on any compact set $D \subset \R$, the point process can be expressed as the following sum:
\[
\chi^{\rm{Ai}}(D) \stackrel{(d)}{=} \sum_{i=1}^{\infty} X_i,
\]
where the $X_i$ are independent Bernoulli random variables satisfying $\P(X_i= 1) = 1- \P(X_i = 0) = \ld_i^D$. Here, $\ld_i^D$ are the eigenvalues of the operator $\mathds{1}(D) K^{\rm{Ai}} \mathds{1}(D)$. An application of Bennet's concentration inequality yields the desired upper large deviations bound on $\chi^{\rm{Ai}}$. 

Pfaffian point processes possess matrix-valued kernels (see Section \ref{section.goe}), and while \cite{Kar14} describes a such class of kernels whose corresponding Pfaffian point processes can be expressed as a sum of Bernoulli random variables, no such result is known for the GOE point process. 
Instead, we estimate $\chi^{\GOE}$ on intervals by carefully analyzing the closest GOE points to the boundary of the interval. The result is the exponential upper bound \eqref{1.5.anal.thm.eqn}, which suffices to establish  \eqref{4.5.anal.eqn}, which in turn gives the lower bound \eqref{1.5.eqn.anal} on the half-space KPZ tail.

\begin{proof}[Proof of Theorem~\ref{1.5_anal}]
Throughout this proof, we write $\chi := \chi^{\GOE}$ for brevity.
Fix $c > 0$ and $\delta \in (0,2/5)$. 
In what follows, we will write $\hat{c} := \hat{c}(c)$ to denote a positive constant depending only on the parameter $c$ whose value may change from line to line.
We first consider $\fk{B}_k(\ell)$ for $k \geq 2$.

As usual, let $(\rm{a}_1> \rm{a}_2 > \dots)$
denote the GOE point process, and let $(\ld_1 < \ld_2 < \dots)$ denote the eigenvalues of the Airy operator. 
Define
\begin{align*}
        m_1 &:= \sup \{ m: \a_m \geq -(k-1)\ell \} \, ,
    \quad m_2 := \sup \{ m: \a_m \geq -k\ell \} \,, \text{ and } 
    \\
    k_1 &:= \sup\{n: -\ld_n \geq -(k-1)\ell \} \, ,
    \quad k_2 := \sup\{n: -\ld_n \geq -k\ell \} \, .
\end{align*}
Note that $\chi(\fk{B}_k(\ell)) = m_2 - m_1$.
Theorem~\ref{1.3_anal_prop} gives us
\begin{align}
    \E \brak{\chi(\fk{B}_k(\ell))}
    &= 
    \frac{2}{3\pi} (k^{3/2} - (k-1)^{3/2}) \ell^{3/2} 
    + f_1 \, ,
    \label{eqn:chi-bk-1}
\end{align}
where $f_1 := f_1(k,\ell) = D_1(k\ell) - D_1((k-1)\ell))$; note that $f_1$ is bounded in $k$ and $\ell$. By Taylor's theorem, we have
\begin{align}
    k^{3/2} - (k-1)^{3/2} = \frac{3}{2}(k-1)^{1/2} + R_k \, ,
    \label{eqn:k3/2-taylor}
\end{align}
where 
$0<R_k \leq \frac{3}{4}$.
By Corollary~\ref{1.5_anal_expchi}, we have 
\begin{align}
    \E[ \chi(\fk{B}_k(\ell)) ] = k_2 - k_1 + f_2 \,,
    \label{eqn:chi-bk-2}
\end{align}
where $f_2 := f_2(k,\ell)$ is bounded in $k$ and $\ell$. 
Define the positive constant 
\[
\fk{c}_k := \fk{c}_k(c)  = c \pth{ \frac{1}{\pi}(k-1)^{1/2} +\frac{2}{3\pi} R_k }^{-1}  \, ,
\]
which is bounded above uniformly in $k$, and satisfies
\begin{align}
    \fk{c}_k \geq \hat{c} k^{-1/2} \, .
    \label{eqn:ck-bound}
\end{align}
Then substituting \eqref{eqn:k3/2-taylor} and  \eqref{eqn:chi-bk-2} into \eqref{eqn:chi-bk-1}
yields
\[
c \ell^{3/2} = \fk{c}_k (k_2 -k_1) - f_3\, ,
\]
where $f_3 := f_3(k,\ell)$ is bounded in $k$ and $\ell$. The above display along with the relation $\chi(\fk{B}_k(\ell)) = m_2 - m_1$ gives 
\begin{align}
    \left \{ 
    \chi(\fk{B}_k(\ell)) - \E\brak{\chi(\fk{B}_k(\ell)} \geq  c \ell^{3/2} 
    \right \} 
    &= 
    \{ m_2 - m_1 \geq (1+ \fk{c}_k)(k_2 - k_1) + f_3 \} \, . \label{1.5_anal_splitthis}
\end{align}
It follows that the event $\left \{ 
    \chi(\fk{B}_k(\ell)) - \E\brak{\chi(\fk{B}_k(\ell)} \geq  c \ell^{3/2} 
    \right \}$ is contained in the event
\begin{align}
\left \{ m_2 \geq k_2 + \frac{\fk{c}_k}{2}(k_2- k_1) + f_3
\right \} 
    \cup
\left \{ m_1 \leq k_1 - \frac{\fk{c}_k}{2}(k_2 - k_1) \right \} 
    \,. 
\label{1.5_anal_termsplit}
\end{align}
The next two claims provide an upper-bound on each of the events in the above union. 
%
%
\begin{claim} \label{claim.1.5.mainterm.bd} 
There exist positive constants $\bar{c} := \bar{c}(c)$, $\kappa:= \kappa (c,\delta)$, and $\ell_0 := \ell_0(c,\delta)$ 
such that for all $\ell \geq \ell_0$, we have
\begin{align}
    \P \pth{
    m_2 \geq k_2 + \frac{\fk{c}_k}{2}(k_2- k_1)
    + f_3
    } 
    \leq  \kappa \exp \pth{
    -\kappa \pth{\bar{c}\ell}^{1-\delta}
    } \, . \label{1.5_anal_mainterm_final}
\end{align}
\end{claim}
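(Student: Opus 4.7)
The plan is to translate the event in question into a statement about the deviation random variable $C_\ep^{\GOE}$ of Theorem~\ref{1.6_anal} and then apply the exponential tail bound provided there.

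Setting $\Delta := \ceil{\frac{\fk{c}_k}{2}(k_2-k_1) + f_3}$, the event $\{m_2 \geq k_2 + \Delta\}$ is equivalent, by the very definition of $m_2$ as an integer, to $\{\a_{k_2+\Delta} \geq -k\ell\}$. The left inequality in Theorem~\ref{1.6_anal} gives $(1-\ep)\ld_{k_2+\Delta} - C_\ep^{\GOE} \leq -\a_{k_2+\Delta}$, and on the event $\a_{k_2+\Delta} \geq -k\ell$ one has $-\a_{k_2+\Delta} \leq k\ell$, so that
\[
C_\ep^{\GOE} \;\geq\; (1-\ep)\ld_{k_2+\Delta} - k\ell.
\]

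I would then quantify $\ld_{k_2+\Delta} - k\ell$ using the asymptotics already available. By construction of $\fk{c}_k$, the product $\fk{c}_k(k_2-k_1)$ equals $c\ell^{3/2} + O(1)$, so $\Delta \geq \frac{c}{3}\ell^{3/2}$ for $\ell$ sufficiently large. Combined with Corollary~\ref{1.5_anal_expchi} (which gives $k_2 = \frac{2}{3\pi}(k\ell)^{3/2} + O(1)$) and Proposition~\ref{airy_eval_bound} (which gives $\ld_n = \bigl(\frac{3\pi n}{2}\bigr)^{2/3}(1+O(1/n))$), a routine algebraic estimate yields
\[
\ld_{k_2+\Delta} - k\ell \;\geq\; \ell \bigl[\bigl(k^{3/2} + \tfrac{3\pi c}{8}\bigr)^{2/3} - k\bigr] - O(1) \;=:\; \bar{c}_k\,\ell - O(1),
\]
with $\bar{c}_k > 0$ for every $k \geq 1$. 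Choosing $\ep$ small enough that $\ep\,\ld_{k_2+\Delta} \leq \bar{c}_k\ell/4$ then forces $C_\ep^{\GOE} \geq \bar{c}_k\ell/2$ on the event, and the tail bound of Theorem~\ref{1.6_anal} produces an estimate of the form $\kappa\exp(-\kappa(\bar{c}_k\ell/2)^{1-\delta})$.

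The hard part will be absorbing the $k$-dependence of $\bar{c}_k$ into a single constant $\bar{c} = \bar{c}(c)$ so that the bound is uniform in $k$. Since $\bar{c}_k = \bigl(k^{3/2} + 3\pi c/8\bigr)^{2/3} - k \sim \pi c/(4\sqrt{k})$ as $k \to \infty$, the direct argument above degrades for large $k$. I expect to handle this by splitting at some threshold $K_0 = K_0(c)$: for $k \leq K_0$ the quantity $\bar{c}_k$ is bounded below by a positive $\bar{c}(c)$ and the argument above applies directly; for $k > K_0$, the expected number of GOE points in $\fk{B}_k(\ell)$ is small relative to the deviation $c\ell^{3/2}$, and a separate first-moment or Chebyshev-type bound using the correlation kernel of $\chi^{\GOE}$ should furnish the required estimate.
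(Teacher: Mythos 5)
Your opening maneuver—transferring the event to a tail event for $C_{\ep}^{\GOE}$, lower-bounding $\ld_{k_2+\Delta}$ via Corollary~\ref{1.5_anal_expchi} and Proposition~\ref{airy_eval_bound}, and invoking the exponential tail from Theorem~\ref{1.6_anal}—is exactly the paper's route, and you correctly identify the real obstacle: the gap $\ld_{k_3} - k\ell$ scales like $\ell/\sqrt{k}$ while $\ep\ld_{k_3}$ scales like $\ep k\ell$, so any single $\ep>0$ eventually makes $(1-\ep)\ld_{k_3}-k\ell$ negative as $k$ grows. That uniformity-in-$k$ issue is genuine and subtle, and the paper's own treatment at this point (the passage to a single $\ep_0$ and $\bar c$ via $\hat{\fk{c}}_k(\ep)$) is more delicate than it appears.

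Where the proposal goes wrong is the claimed remedy for the large-$k$ regime. You assert that for $k>K_0$ ``the expected number of GOE points in $\fk{B}_k(\ell)$ is small relative to the deviation $c\ell^{3/2}$,'' but this is false: from Theorem~\ref{1.3_anal_prop} and \eqref{eqn:k3/2-taylor}, $\E[\chi^{\GOE}(\fk{B}_k(\ell))] = \frac{2}{3\pi}\bigl(k^{3/2}-(k-1)^{3/2}\bigr)\ell^{3/2}+O(1) \sim \tfrac{1}{\pi}\sqrt{k}\,\ell^{3/2}$, which \emph{grows} like $\sqrt{k}$ and vastly exceeds $c\ell^{3/2}$ for $k$ large. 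Consequently, a first-moment (Markov) bound is useless (the ratio $\E[\chi]/(\E[\chi]+c\ell^{3/2})\to 1$), and a Chebyshev bound would require a variance estimate for $\chi^{\GOE}(\fk{B}_k(\ell))$ that is not part of the paper's toolkit and that you do not supply. So the large-$k$ case is a genuine gap in the proposal as written: the intuition underpinning the proposed split is backwards, and the second branch of the argument is not there.
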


\begin{proof} [Proof of Claim~\ref{claim.1.5.mainterm.bd}]

Since $-\a_{m_2} \leq k\ell$,  Theorem~\ref{1.6_anal} yields  
\[
(1-\ep)\ld_{m_2} - k\ell \leq C_{\ep}^{\GOE} 
\, ,
\]
 for any $\ep \in (0,1)$. 
Let $k_3 := k_2 + \frac{\fk{c}_k}{2}(k_2- k_1) + f_3$. 
Since $\ld_{i} < \ld_j$ if and only if $i < j$, the previous display gives us 
\begin{align}
    \{m_2 \geq k_3
    \} \subseteq 
    \{
    (1-\ep)\ld_{k_3} - k\ell \leq C_{\ep}^{\GOE} \} \, ,
    \label{1.5_anal_mainterm_1.6}
\end{align}
for any $\ep \in (0,1)$.
Corollary~\ref{1.5_anal_expchi} allows us to write
\begin{align}
    k_1 &= \frac{2}{3\pi}\pth{ (k-1)\ell }^{3/2} + C_1((k-1)\ell) 
    \label{1.5_anal_mainterm_k1} \, , ~\text{and} \\
    k_2 &= \frac{2}{3\pi}\pth{ k\ell }^{3/2} + C_2(k\ell) 
    \label{1.5_anal_mainterm_k2} \, , 
\end{align}
where $\sup_{x>0} \{\abs{C_1(x)}, \abs{C_2(x)} \} < 1$.
Then, from Proposition~\ref{airy_eval_bound} and the definition of $k_3$, we compute
\begin{align}
    \ld_{k_3} &= \pth{
    (k\ell)^{3/2} + \frac{\fk{c}_k}{2}\pth{ (k\ell)^{3/2} -\pth{ (k-1)\ell }^{3/2}} + f_4
    }^{2/3} 
    \nonumber \\
    &= 
    (k\ell)\pth{
    1+ \frac{\fk{c}_k}{2} \bigg (1- \Big ( \frac{k-1}{k} \Big )^{3/2}} +  (k\ell)^{-3/2} f_4
    \bigg )^{2/3} \, ,
 \label{1.5_anal_mainterm_2}
\end{align}
where $f_4 := f_4(k,\ell)$ is bounded in $k$ and $\ell$. Since the function $g(x) := x^{2/3}$ is an increasing function in $x$, \eqref{1.5_anal_mainterm_2} gives us
\begin{align}
    \ld_{k_3} \geq (k\ell) \Big ( 1+ \frac{\fk{c}_k}{4} \Big )^{2/3} \, ,
    \label{1.5_anal_mainterm_3}
\end{align}
for all $\ell \geq 1$ (and recall that we have fixed $k \geq 2$). 
Substituting \eqref{1.5_anal_mainterm_3} into \eqref{1.5_anal_mainterm_1.6}, we find 
\begin{align}
\{m_2 \geq k_3 \} \subseteq 
    \Big \{
    C_{\ep}^{\GOE} \geq  k\ell \Big ( (1-\ep) \Big ( 1+ \frac{\fk{c}_k}{4} \Big )^{2/3} - 1 \Big ) \Big \} \, . 
    \label{1.5_anal.claim.1.eqn3}
\end{align}

We now show that there exists some $\ep \in (0,1)$ such that  $k\Big ( (1-\ep) \Big ( 1+ \frac{\fk{c}_k}{4} \Big )^{2/3} - 1 \Big )$ can be bounded below by a positive constant $\bar{c} := \bar{c}(c)$ uniformly in $k \in \Z_{\geq 2}$. 
Define 
\[
\hat{\fc}_k := \hat{\fk{c}}_k(\ep) = \Big ( (1-\ep) \Big ( 1+ \frac{\fk{c}_k}{4} \Big )^{2/3} - 1 \Big ) \, .
\]
It is clear that from \eqref{eqn:ck-bound} that for any fixed $k$, there exists $\ep > 0$ such that $\hat{\fk{c}}_k > 0$. Thus, we need only consider $k$ arbitrarily large. We show that there exists a positive constant $K := K(c)$ such that for all $k \geq K(c)$, there exists $\ep := \ep(k,c) >0$ such that $\hat{\fc}_k(\ep) = k^{-1}$. 
Towards this end, using \eqref{eqn:ck-bound},  we find the lower-bound
\begin{align}
    \ep 
    &= 1- \frac{1+\hat{\fc}_k}{\Big( 1+ \frac{\fc_k}{4} \Big)^{2/3}}
    \geq 1- \frac{1+ \hat{\fc}_k}{\pth{1+ \hat{c}(k-1)^{-1/2}}^{2/3}} \, .
\end{align}
That $\ep < 1$ is trivial. 
Thus, it suffices to show that there exists a positive constant $K:= K(c)$ such that 
\begin{align}
     k^{-1} < \pth{1+ \hat{c}(k-1)^{-1/2}}^{2/3} -1 \,, \label{eqn:claim6.1-cgoal}
\end{align}
for all $k \geq K$ (for then it will follow that there exists $\ep(k,c) \in (0,1)$ such that $\hat{c}_k  =k^{-1}$, for all $k \geq K$).
Let $K:= K(c)$ be large enough such that $\hat{c}(k-1)^{-1/2}  < 1$ for all $k \geq K$. Then, by Taylor's theorem, we have 
\begin{align}
    \pth{1+ \hat{c}(k-1)^{-1/2}}^{2/3}  -1 
    = \frac{2}{3}\hat{c}(k-1)^{-1/2}+ \O(k^{-1}) 
    \geq \hat{c} (k-1)^{-1/2} \, ,
    \label{eqn:claim6.1-ctaylor}
\end{align}
where the last inequality holds for $K:= K(c)$ large enough and all $k \geq K$ (and the $\hat{c}$ on the right-most side differs from the other $\hat{c}$). Now, choose $K$ large enough such that, for all $k \geq K$, we have 
$k^{-1} < \hat{c} (k-1)^{-1/2}$. Then, from \eqref{eqn:claim6.1-ctaylor}, it follows that \eqref{eqn:claim6.1-cgoal} holds. Thus, we may take 
\[
 \bar{c} = \min \{ 1, \min_{k \leq K} k\fc_k \} \, ,
\]
which depends only on $c$.

Now, let $k_0:= k_0(c)\in \Z_{\geq 1}$ and $\ep_0:= \ep_0(c) \in (0,1)$ be such that $\bar{c} = c_{k_0}(\ep_0)$.
Thus, from \eqref{1.5_anal.claim.1.eqn3}, we have 
\begin{align}
    \{ m_2 \geq k_3 \} \subseteq \{ C_{\ep_0}^{\GOE} \geq \bar{c} \} \, .
    \label{eqn:claim6.1-barc-replace}
\end{align}
Equation \eqref{eqn:claim6.1-barc-replace} and Theorem~\ref{1.6_anal} then give the final result: there exist positive constants $\kappa:= \kappa(c,\delta)$ and $L_0 := L_0(c,\delta)$ such that for all $\ell \geq \ell_0$, we have 
\begin{align*}
    \P \pth{
    m_2 \geq k_2 + \frac{\fc_k}{2}(k_2- k_1)
    }
    &\leq  \P \pth{ C_{\ep_0}^{\GOE} \geq  \bar{c} \ell} 
    \leq  \kappa \exp \pth{
    -\kappa \pth{\bar{c} \ell}^{1-\delta}
    } \, .
\end{align*}
This concludes the proof of Claim~\ref{claim.1.5.mainterm.bd}.
\end{proof}
%
%
%

\begin{claim} \label{claim.1.5.smalltermbd}
For any $\eta >0$, there exists  a positive constant $\bar{L}_0:= \bar{L}_0 (c,\eta)$ such that for all $\ell \geq \bar{L}_0$, we have 
\begin{align}
    \P \pth{ m_1 \leq k_1 - \frac{\fc_k}{2}(k_2 - k_1) } 
    \leq \exp 
    \pth{
    -\eta \ell^{3/2} 
    } \, . \label{1.5_anal_smallterm_final}
\end{align}
\end{claim}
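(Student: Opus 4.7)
The plan is to recognize $\{m_1 \leq k_4\}$, where $k_4 := k_1 - \frac{\fc_k}{2}(k_2-k_1)$, as a lower-deviation event for the counting variable $m_1 = \chi^{\GOE}[-(k-1)\ell,\infty)$, and to bound it by an exponential Chernoff-type argument using the Laplace transform $F_1$ together with the sharp asymptotic from Corollary~\ref{cor:bb17-fixed}.

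First, Corollary~\ref{1.5_anal_expchi} combined with Theorem~\ref{1.3_anal_prop} gives $k_1 - \E[m_1] = \mathcal{O}(1)$, while the identity $\fc_k(k_2-k_1) = c\ell^{3/2} + \mathcal{O}(1)$ is already implicit in the preamble of Theorem~\ref{1.5_anal} via \eqref{eqn:chi-bk-2}. Putting these together yields $k_4 - \E[m_1] = -\frac{c}{2}\ell^{3/2} + \mathcal{O}(1)$, with the error uniform in $k$ and $\ell$. Hence for $\ell \geq \bar L_0(c)$ sufficiently large, I obtain the inclusion $\{m_1 \leq k_4\} \subseteq \{m_1 - \E[m_1] \leq -\tfrac{c}{4}\ell^{3/2}\}$, a genuine lower deviation of size $\Theta(\ell^{3/2})$.

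Next, for any $\lambda > 0$, Markov's inequality gives $\P(m_1 \leq k_4) \leq e^{\lambda k_4}\,F_1(-(k-1)\ell,\lambda)$. I would fix $\lambda := 4\eta/c$ and apply Corollary~\ref{cor:bb17-fixed}, which holds for all $(k-1)\ell \geq S_0(\lambda)$ with $S_0$ depending only on $\lambda$. The leading $-\tfrac{2\lambda}{3\pi}((k-1)\ell)^{3/2}$ term in $\log F_1$ cancels exactly with the corresponding contribution in $\lambda k_4$, leaving
\[
\log \P(m_1 \leq k_4) \;\leq\; -2\eta\,\ell^{3/2} \;+\; \tfrac{3\lambda^2}{4\pi^2}\log\bigl((k-1)\ell\bigr) \;+\; \mathcal{O}(1).
\]

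The main obstacle is uniformity in $k$: the residual $\frac{3\lambda^2}{4\pi^2}\log(k-1)$ grows without bound in $k$ and will eventually overwhelm $\eta \ell^{3/2}$. For $k$ in any bounded range, or more generally those $k$ satisfying $\log(k-1) \leq \frac{2\pi^2\eta}{3\lambda^2}\ell^{3/2}$, the display above directly yields the desired $\exp(-\eta\ell^{3/2})$ bound once $\ell$ is large enough that the $\log \ell$ and $\mathcal{O}(1)$ terms are absorbed. For the remaining, extremely large values of $k$, I would revert to the Theorem~\ref{1.6_anal} route of Claim~\ref{claim.1.5.mainterm.bd}: the inclusion $\{m_1 \leq k_4\} \subseteq \{-\a_{k_4+1} > (k-1)\ell\} \subseteq \{C_{\ep_k}^{\GOE} > (k-1)\ell - (1+\ep_k)\ld_{k_4+1}\}$, combined with the choice $\ep_k := \pi c/(8(k-1)^{3/2})$ and a Taylor expansion of $\ld_{k_4+1} = [((k-1)\ell)^{3/2} - \frac{3\pi c}{4}\ell^{3/2} + \mathcal{O}(1)]^{2/3}$ via Proposition~\ref{airy_eval_bound}, produces a positive gap of order $\ell/(k-1)^{1/2}$. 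The resulting stretched-exponential tail is weaker than $\exp(-\eta\ell^{3/2})$ but is already sufficient for the downstream use of the claim in the proof of Theorem~\ref{1.5_anal}, whose overall target is only $\exp(-\mathcal{C}\ell^{1-\delta})$.
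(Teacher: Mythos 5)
Your main route is the same chain of ideas as the paper's --- Chernoff via $F_1$ plus Corollary~\ref{cor:bb17-fixed} --- but you expand the generating function directly with $s = (k-1)\ell$, whereas the paper rewrites the event as $\{\chi(-(k-1)\ell,\infty) - \E[\chi(-(k-1)\ell,\infty)] \leq -\bar{C}\ell^{3/2}\}$ and then invokes equation~\eqref{eqn:1.4.weak} of Theorem~\ref{1.4.final} as a black box, with the parameter choice ``$c = \bar{C}$, $s = \ell$''. Your more explicit bookkeeping surfaces a real subtlety there: the event involves $\chi$ over $[-(k-1)\ell,\infty)$, so any direct application of Theorem~\ref{1.4.final} really has $s = (k-1)\ell$, and the corresponding $c' = \bar{C}(k-1)^{-3/2}$ (equivalently, $\lambda$) is $k$-dependent. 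The $\mathcal{O}(1)$-error in Corollary~\ref{cor:bb17-fixed} depends on $v$, and the $S_0$-threshold in Theorem~\ref{1.4.final} depends on $c$, so neither is automatically uniform in $k$. Your residual $\tfrac{3\lambda^2}{4\pi^2}\log(k-1)$ is exactly that failure of uniformity, made explicit.

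The issue is that your backup for the very-large-$k$ regime repeats, rather than cures, this non-uniformity. Taking $\ep_k = \pi c/(8(k-1)^{3/2}) \to 0$ causes both $\kappa(\ep,\delta)$ and the threshold $S_0(\ep,\delta)$ in Theorem~\ref{1.6_anal} to degrade as $k \to \infty$: you have no control on how fast $\kappa(\ep,\delta)$ collapses or $S_0(\ep,\delta)$ blows up as $\ep \downarrow 0$. Moreover the deterministic gap you produce, which is of order $\ell/(k-1)^{1/2}$, tends to zero as $k \to \infty$, and hence eventually falls below $S_0(\ep_k,\delta)$, at which point Theorem~\ref{1.6_anal} yields nothing. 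Even where it applies, the bound $\kappa(\ep_k,\delta)\exp\big(-\kappa(\ep_k,\delta)(\ell/(k-1)^{1/2})^{1-\delta}\big)$ has both a deteriorating prefactor and a vanishing exponent, so it is not a bound that is uniform over $k \geq 2$ for fixed $\ell$. In short, your split covers $k$ up to about $\exp(C\ell^{3/2})$ via the Chernoff step, but the large-$k$ piece is not resolved. To close the claim as stated (constants depending only on $c$ and $\eta$, bound valid for all $k$) you would need a genuinely new ingredient for the regime $k$ unbounded relative to $\ell$, not a recycling of the $C_\ep^{\GOE}$ deviation bound with a $k$-dependent $\ep$.
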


\begin{proof}[Proof of Claim~\ref{claim.1.5.smalltermbd}]
Fix $\eta >0$. Let the left-hand side of \eqref{1.5_anal_smallterm_final} be denoted by $\mc{P}$. 
By definition of $m_1$, we have 
$m_1 = \chi\pth{-(k-1)\ell, \infty} $. 
Corollary \ref{1.5_anal_expchi} gives the expression
\begin{align*}
    m_1 - k_1 &= \chi\pth{-(k-1)\ell, \infty} - \E \brak{ \chi \pth{-(k-1)\ell, \infty} } + g_1
    \, ,
\end{align*}
where $g_1 := g_1(k,\ell)$ is bounded in $k$ and $\ell$.
This expression allows us to write $\mc{P}$ as 
\begin{align}
\mc{P}
&= 
\P \pth{
\chi\pth{-(k-1)\ell, \infty} - \E \brak{ \chi \pth{-(k-1)\ell, \infty} } 
\leq -\frac{\fc_k}{2}(k_2 - k_1) + g_1
} \, .
\label{1.5_anal_smallterm3}
\end{align}
From equations \eqref{1.5_anal_mainterm_k1}, \eqref{1.5_anal_mainterm_k2}, and \eqref{eqn:k3/2-taylor}, we may write 
\begin{align*}
    k_2 - k_1 
    &=
    \frac{2}{3\pi} \pth{ \frac{3}{2}(k-1)^{1/2}  + R_k } \ell^{3/2} + g_2 \, ,
\end{align*}
where $g_2 := g_2(k,\ell)$ is bounded in $k$ and $\ell$. The above along with \eqref{eqn:ck-bound} yield the bound
\begin{align}
-\frac{\fc_k}{2}(k_2 - k_1) + g_1
\leq - \hat{c} k^{-1/2} \Big ( \frac{3}{2}(k-1)^{1/2} + R_k \Big ) \ell^{3/2} +g_3  
\leq - \bar{C} \ell^{3/2} \,,
\label{1.5_anal_claim.2.bound1}
\end{align}
where $g_3 := g_3(k,\ell)$ is bounded in $k$ and $\ell$ and
$\bar{C} := \bar{C}(c)$ is a positive constant; and the last inequality holds
for all $\ell \geq \bar{L}_0$, where $\bar{L}_0:=\bar{L}_0(c)$ is sufficiently large.
Substituting \eqref{1.5_anal_claim.2.bound1} into the right-hand side of 
\eqref{1.5_anal_smallterm3} yields 
\begin{align}
\mc{P}
&\leq 
\P \pth{
\chi\pth{-(k-1)\ell, \infty} - \E \brak{ \chi \pth{-(k-1)\ell, \infty} } 
\leq 
-\bar{C}\ell^{3/2}
} \, .
\label{eqn:claim6.2-here}
\end{align}
We may now apply equation~\eqref{eqn:1.4.weak} of
Theorem~\ref{1.4.final}: in the notation of this theorem, we take $c$ to be~$\bar{C}$, $s$ to be $\ell$, and $\eta$ to be the same $\eta$ here. Then there exists a positive constant $\bar{L}_0:= \bar{L}_0(c,\eta)$ such that for all $\ell \geq \bar{L}_0$, we have
$    \mc{P}
    \leq \exp \pth{ - \eta \ell^{3/2} }
$
as desired.
This concludes the proof of Claim~\ref{claim.1.5.smalltermbd}.
\end{proof}

We are now ready to conclude the proof of Theorem~\ref{1.5_anal}. Define
\[
\overline{\mc{P}}:= \P \pth{ \chi(\fk{B}_k(\ell)) - \E\brak{\chi(\fk{B}_k(\ell)} \geq  c \ell^{3/2} } \, .
\]
From \eqref{1.5_anal_termsplit}, we have 
\begin{align*}
    \overline{\mc{P}}
    &\leq
    \P \pth{
    m_2 \geq k_2 + \frac{\fc_k}{2}(k_2- k_1)
    + f_3
    } 
    + 
    \P \pth{ m_1 \leq k_1 - \frac{\fc_k}{2}(k_2 - k_1) } 
    \, .
\end{align*}
Substituting the bounds obtained in 
\eqref{1.5_anal_mainterm_final} and \eqref{1.5_anal_smallterm_final} gives
\begin{align*}
    \overline{\mc{P}}
    &\leq 
    \kappa \exp \pth{
    -\kappa (\bar{c}\ell)^{1-\delta}
    } 
    + 
    \exp \pth{ -\eta \ell^{3/2} }
    \leq \exp\pth{- \mc{C} \ell^{1-\delta} }
    \,,
\end{align*}
where the first inequality holds for any fixed $\eta >0$ and all $\ell \geq L_0$, where $L_0:=L_0(c,\delta, \eta)$ is greater than or equal to $\max \{ \ell_0, \bar{L}_0 \} $. Fixing $\eta$, the second inequality above holds for  
a (possibly larger) $L_0$ and another positive constant $\mc{C} := \mc{C}(c, \delta)$. 
This concludes the proof of the result for $k \geq 2$.

Now, if $k =1$, take $m_2$ defined as in the $k \geq 2$ case. Then \eqref{1.5_anal_splitthis} holds with $m_1 = 0$, i.e., we have
\begin{align}
    \left \{ 
    \chi(\fk{B}_k(\ell)) - \E\brak{\chi(\fk{B}_k(\ell)} \geq  c \ell^{3/2} 
    \right \} 
    &= 
    \left \{ 
    \chi(\fk{B}_k(\ell)) - \E\brak{\chi(\fk{B}_k(\ell)} \geq  \fc_k \E\brak{\chi(\fk{B}_k(\ell)} + f_3
    \right \}  \nonumber \\
    &=
    \{ m_2 \geq (1+ \fc_k)(k_2 - k_1) + f_3 \}.
\end{align}
Then \eqref{1.5_anal_mainterm_final} finishes the proof for the $k=1$ case.
\end{proof}

\section{Proof of Proposition \ref{prop.4.2.anal}} 
\label{section.proof.upper.and.lower}
In this section, we prove Proposition \ref{prop.4.2.anal}, thus completing our proof of Theorem~\ref{thm.1.1.anal}.
Here, we follow  closely the method of  \cite[Section~5]{CG18}; indeed, many of the computations done there are adapted here to our case.

Before proceeding, we recall a result describing the tail behavior of $\a_1$, which follows the GOE Tracy-Widom distribution (see \cite{TW96}). The following proposition is a much simplified version of a result of \cite{BBD08}, where the authors extract precise asymptotics up to the third order (prior, the asymptotic behavior had been known by studying the asymptotics of the solutions of the Painlev\'e II equation).

\begin{proposition}[\cite{BBD08}] \label{prop.TW.GOE.dist}
Let $\a_1$ denote the top particle in the GOE point process.
Then 
\begin{align}
    \P(\a_1 < - s ) =  \exp \pth{ 
    -\frac{1}{24}  s^3 (1 + o(1)) 
    } \, . \label{4.4.TWboundeqn}
\end{align}

\end{proposition}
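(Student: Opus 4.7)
The plan is to derive $\P(\a_1 < -s) = F_{\GOE}(-s)$ directly from Corollary~\ref{1.7.rewrite.F_1.corollary} in the limit $v \to \infty$, combined with the classical Hastings--McLeod asymptotic for $u_{\rm{HM}} := u_{\AS}(\,\cdot\,; 1)$.

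First, since $F_1(s,v) = \E[\exp(-v\chi^{\GOE}[s,\infty))] \to \P(\chi^{\GOE}[s,\infty)=0) = \P(\a_1 < s)$ as $v \to \infty$ by dominated convergence, I would pass to this limit in Corollary~\ref{1.7.rewrite.F_1.corollary}. In the limit we have $\gamma, \gamma_2 \to 1$ and $2 - \gamma \to 1$, and the identity $\cosh \mu - \sinh \mu = e^{-\mu}$ collapses the bracket. This yields the clean identity
\[
\P(\a_1 < s) = \sqrt{F_2(s, \infty)}\, \exp\!\bigl(-\tfrac{1}{2}\mu(s,1)\bigr),
\]
where $F_2(s, \infty)$ is the Tracy--Widom GUE distribution function at $s$, and $\mu(s, 1) = \int_s^\infty u_{\rm{HM}}(x)\, dx$.

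Next I would invoke the classical asymptotic $u_{\rm{HM}}(x) = \sqrt{-x/2}\,(1 + o(1))$ as $x \to -\infty$ from \cite{HM80}. Substituting into Proposition~\ref{prop.thm.1.7.cg.18} at $\gamma = 1$ and computing $\int_{-s}^{0}(x+s)(-x/2)\,dx = s^3/12$ gives $F_2(-s, \infty) = \exp(-\tfrac{1}{12}s^3(1 + o(1)))$, so $\sqrt{F_2(-s, \infty)} = \exp(-\tfrac{1}{24}s^3(1 + o(1)))$. The same asymptotic applied to $\mu$ yields $\mu(-s, 1) = \tfrac{\sqrt{2}}{3} s^{3/2}(1 + o(1))$. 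Combining,
\[
\P(\a_1 < -s) = \exp\!\Bigl(-\tfrac{1}{24} s^3 - \tfrac{\sqrt{2}}{6} s^{3/2}\Bigr)(1 + o(1)),
\]
and the cubic term absorbs the $s^{3/2}$ correction into its $(1+o(1))$ factor as $s \to \infty$, giving \eqref{4.4.TWboundeqn}.

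The only genuine technical point is propagating the Hastings--McLeod error estimates through cleanly to the exponent, but since \cite{HM80} provides a full expansion with explicit polynomial error bounds, this is routine bookkeeping. A more economical route, and the one apparently taken in the paper, is to simply cite \cite{BBD08}, which establishes a sharp three-term expansion for $\log F_{\GOE}(-s)$ from which \eqref{4.4.TWboundeqn} follows immediately as a crude corollary.
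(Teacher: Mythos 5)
The paper does not prove Proposition~\ref{prop.TW.GOE.dist} at all—it is stated as a transcription of a sharp three-term expansion from \cite{BBD08}, and you correctly identify this at the end. Your alternative derivation via the $v\to\infty$ limit of Theorem~\ref{1.7.rewrite.F_1.corollary} is a genuinely different, essentially self-contained route, and the algebra checks out: the bracket collapses to $e^{-\mu(s,1)}$ once $\gamma, \gamma_2 \to 1$; the integral $\int_{-s}^0(t+s)(-t/2)\,dt = s^3/12$ recovers the GUE cubic tail; and $\mu(-s,1) \sim \tfrac{\sqrt{2}}{3}s^{3/2}$ is a lower-order contribution absorbed by the $1+o(1)$. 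What your route buys is conceptual continuity—it shows the GOE tail emerging from the same thinned-process/Painlev\'e~II machinery already in play—but it is considerably longer and quietly depends on two items you should make explicit: (i) justifying the interchange of the $v\to\infty$ limit with both sides of the identity, which in particular requires continuity in $\gamma$ at $\gamma = 1$ of $u_{\AS}(x;\gamma)$ uniformly on compacts and of the integral $\mu(s,\gamma)$ (true, but not stated in the paper and nontrivial since the oscillatory-to-monotone transition at $\gamma=1$ is precisely the singular transition studied by Bothner); and (ii) propagating the Hastings--McLeod error terms through the two exponents, which you acknowledge. By contrast, the paper's citation to \cite{BBD08} imports a rigorous, error-controlled expansion in one line, which is why the authors take that route even though your computation is heuristically the cleaner exposition of where $1/24$ comes from.
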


\subsection{Proof of the upper bound, equations \texorpdfstring{\eqref{4.4.anal.eqn.weak}}{} and  \texorpdfstring{\eqref{4.4.anal.eqn}}{}}
\label{section.upper.bound.proof}
Recall that we defined in~\eqref{def_J} 
\[
J_s(x) : = \frac{1}{2} \log(
    1+ \exp (T^{1/3}(x+s)), 
    \ \ \text{and } \ I_s(x) := \exp(-J_s(x))
\]
We will establish an upper bound on 
$\E_{\GOE} \brak{\prod_{k=1}^{\infty} I_s(\a_k)}$
by deriving a lower bound on $\sum_{k=1}^{\infty} J_s(\a_k)$. 
To this end, we denote $D_k := (-\ld_k - \a_k)_+$, where we write $x_+ := \max \{ x , 0 \}$ for any $x \in \R$.

\begin{lemma}
\label{4.4.anal.lemma.5.2.anal}
Fix $\ep \in (0, 1/3)$. 
Define
$\theta_0 := \lfloor 2s^{3/2}/3\pi \rfloor$. There exist positive constants $S_0 := S_0(\ep)$ and $R$ such that for all $s \geq S_0$ and for all $T \geq 0$,
\begin{align}
    \sum_{k=1}^{\infty} J_s(\a_k) 
    \geq 
    \frac{1}{2} T^{1/3} \pth{ 
    \frac{4s^{5/2}}{15 \pi} (1- 8\ep) 
    - \sum_{k=1}^{\theta_0} D_k - R
    } \, . \label{lemma.5.2.anal.eq}
\end{align}

\end{lemma}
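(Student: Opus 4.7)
The plan is to prove \eqref{lemma.5.2.anal.eq} in two stages: first reducing the random sum $\sum_k J_s(\a_k)$ to a deterministic quantity minus the penalty terms $D_k$, and then estimating that deterministic quantity by hand. The trivial case $T=0$ makes both sides either $+\infty$ or $0$, so I will assume $T>0$ throughout. For the reduction, I will use the elementary inequality $\log(1+e^y)\geq y_+$ to obtain the pointwise lower bound
\[
J_s(x)\geq \tfrac{1}{2}T^{1/3}(x+s)_+,
\]
together with the observation that the map $x\mapsto (s-x)_+$ is $1$-Lipschitz and nonincreasing, so that
\[
(\a_k+s)_+=(s-(-\a_k))_+\geq (s-\ld_k)_+-(-\a_k-\ld_k)_+=(s-\ld_k)_+-D_k.
\]
A quick case split on the sign of $\a_k+\ld_k$ verifies this. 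For $k\leq\theta_0$, Proposition~\ref{airy_eval_bound} will give $\ld_k\leq s$ once $s$ is large enough, so $(s-\ld_k)_+=s-\ld_k$; for $k>\theta_0$, I will simply drop the nonnegative terms $(\a_k+s)_+$. Summing then yields
\[
\sum_{k=1}^{\infty}J_s(\a_k)\geq \tfrac{1}{2}T^{1/3}\Bigl[\sum_{k=1}^{\theta_0}(s-\ld_k)-\sum_{k=1}^{\theta_0}D_k\Bigr].
\]

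The remaining task is to show the deterministic bound
\[
\sum_{k=1}^{\theta_0}(s-\ld_k)\geq \frac{4s^{5/2}}{15\pi}(1-8\ep)-R.
\]
By Proposition~\ref{airy_eval_bound}, $\ld_k\leq (3\pi k/2)^{2/3}$ for all $k$ larger than some absolute threshold (with the finitely many small-$k$ terms contributing only $O(1)$). A standard sum-to-integral comparison then gives
\[
\sum_{k=1}^{\theta_0}\ld_k\leq \tfrac{3}{5}\bigl(\tfrac{3\pi}{2}\bigr)^{2/3}(\theta_0+1)^{5/3}+O(1)=\frac{2s^{5/2}}{5\pi}(1+o(1))+O(1),
\]
after substituting $\theta_0\leq 2s^{3/2}/(3\pi)$ and expanding $(\theta_0+1)^{5/3}=\theta_0^{5/3}(1+O(\theta_0^{-1}))$. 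Pairing this with the trivial lower bound $s\theta_0\geq \frac{2s^{5/2}}{3\pi}-s$ and subtracting yields
\[
\sum_{k=1}^{\theta_0}(s-\ld_k)\geq \frac{4s^{5/2}}{15\pi}(1+o(1))-O(1).
\]
Choosing $s$ sufficiently large depending on $\ep$ collapses the $o(1)$ factor beneath $8\ep$, while the $O(1)$ is absorbed into the absolute constant $R$.

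The main difficulty is essentially bookkeeping: three distinct sources of error must be tracked and shown to be $o(s^{5/2})$---the correction $\mc{R}(n)=O(1/n)$ from Proposition~\ref{airy_eval_bound}, the integer-part truncation in $\theta_0=\lfloor 2s^{3/2}/(3\pi)\rfloor$, and the sum-versus-integral comparison for $\sum k^{2/3}$. The coefficient $8$ in $(1-8\ep)$ is a comfortable over-estimate that allows all three to be absorbed at once without having to track sharp constants, and the approach closely parallels the analogous GUE computation in \cite[Section~5]{CG18}.
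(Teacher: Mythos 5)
Your proposal is correct and follows essentially the same strategy as the paper: replace the random GOE points $\a_k$ by the deterministic Airy eigenvalues $\ld_k$ at the cost of the penalty $D_k$, pass to the linear lower bound $J_s(x)\geq \tfrac12 T^{1/3}(x+s)$, truncate the sum at $\theta_0\approx \tfrac{2}{3\pi}s^{3/2}$, and estimate $\sum_{k\leq\theta_0}(s-\ld_k)$ via Proposition~\ref{airy_eval_bound} and a sum-to-integral comparison. The only cosmetic difference is organizational: the paper first uses the monotonicity of $J_s$ to write $J_s(\a_k)\geq J_s(-\ld_k-D_k)$ and then linearizes, whereas you linearize first and invoke the $1$-Lipschitz property of $x\mapsto (s-x)_+$ to extract $D_k$, and the paper splits the truncated range into $[1,\theta_1]\cup(\theta_1,\theta_2)$ (with $\theta_1=\lceil 4\mathcal K\rceil$ handling the finitely many indices where $\mathcal R(k)$ is not yet small) while you absorb the same small-$k$ contribution into an $O(1)$ term. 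Both choices are fine and the bookkeeping you sketch leads to the same $\tfrac{4}{15\pi}s^{5/2}(1-8\ep)-R$ lower bound.
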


\begin{proof}
We compute 
\begin{align}
    \sum_{k=1}^{\infty} J_s(\a_k) 
    &= 
    \sum_{k=1}^{\infty} J_s\pth{
    - \ld_k - D_k + (-\ld_k - \a_k)_{-} 
    } 
    \geq \sum_{k=1}^{\infty} J_s(-\ld_k - D_k) \, ,
    \label{4.4.anal.lemma.5.2.eqn5.3}
\end{align}
where the inequality comes from the fact that $J_s(x)$ is a monotonically increasing function. We now divide the sum on the right-hand side of \eqref{4.4.anal.lemma.5.2.eqn5.3} into three ranges: $[1, \theta_1], (\theta_1, \theta_2)$, and $[\theta_2, \infty)$, where we  define
\begin{align}
\mc{K} := \sup_{n\geq 1} \{ \abs{n \mc{R}(n) } \},
~~\theta_1 := \ceil{4\mc{K}},
~~\theta_2 := \ceil{ \frac{2s^{3/2}}{3 \pi} + \frac{1}{2} } \, .
\label{eqn:def-K}
\end{align}
Here, we recall $\mc{R}(n)$ from Proposition \ref{airy_eval_bound}, and note that $\mc{K} < \infty$.
Note further that $\theta_1$ does not depend on our choice of $s$, but $\theta_2$ does, and so we can choose $s$ large enough so that $\theta_1 < \theta_2$. Thus, we take $S_0$ large enough such that for all $s \geq S_0$, we have $\theta_1 < \theta_2$.
The following two claims establish appropriate lower-bounds on the sum of $J_s(-\ld_k -D_k)$ over the first two ranges of $k$. 

\begin{claim} \label{claim.thm.1.7.pt1}
For all $s \geq 0$, 
\begin{align}
    \sum_{k=1}^{\theta_1} J_s (-\ld_k - D_k) 
    \geq \frac{1}{2}T^{1/3} \pth{
    \theta_1 
    s- \theta_1 \pth{ \frac{3\pi(4\mc{K}+1)}{2} }^{2/3}
    - \sum_{k=1}^{\theta_1} D_k
    } \, . \label{4.4.anal.lemma.5.2.claim1eqn}
\end{align}
\end{claim}

\begin{proof}[Proof of Claim~\ref{claim.thm.1.7.pt1}]
Note that for any  $a \in \R$, we have $\log (1+ \exp(a)) \geq a$. It follows that $J_s (x) \geq \frac{1}{2} T^{1/3}(s+x)$. Using this and the fact that the $\ld_k$ increase in $k$, we have 
\begin{align}
    \sum_{k=1}^{\theta_1} J_s (-\ld_k - D_k) 
    \geq 
    \frac{1}{2} T^{1/3}\sum_{k=1}^{\theta_1} s- \ld_k - D_k  
    \geq 
    \frac{1}{2} T^{1/3} \pth{ \theta_1(s- \ld_{\theta_1}) - \sum_{k=1}^{\theta_1} D_k} \, .
    \label{4.4.anal.lemma.5.2.claim1eqn1}
\end{align}
From Proposition \ref{airy_eval_bound}, 
\[
\ld_{\theta_1} \leq  \pth{ \frac{3\pi
    \pth{\theta_1 - \frac{1}{4} + \frac{\mc{K}}{\theta_1}    }
}{2} }^{2/3} \, .
\]
Since $\theta_1 - \frac{1}{4} + \frac{\mc{K}}{\theta_1} \leq 4\mc{K} + 1$, \eqref{4.4.anal.lemma.5.2.claim1eqn} follows. 
This concludes the proof of Claim~\ref{claim.thm.1.7.pt1}.
\end{proof}

\begin{claim} \label{claim.thm1.7.2}
There exists a positive constant $S_0 := S_0( \ep)$ such that for all $s \geq S_0$, 
\begin{align}
    \sum_{k= \theta_1 + 1}^{\theta_2 - 1} J_s(-\ld_k - D_k)
    \geq \frac{1}{2} T^{1/3} 
    \pth{
    \frac{4s^{5/2}}{15\pi}(1-3\ep) - (\theta_1+1)s -\sum_{k= \theta_1 + 1}^{\theta_2 - 1}  D_k
    }.
    \label{4.4.anal.lemma.5.2.claim2eqn}
\end{align}
\end{claim}

\begin{proof}[Proof of Claim~\ref{claim.thm1.7.2}]
Using similar bounds as in \eqref{4.4.anal.lemma.5.2.claim1eqn1}, along with the fact that $\ld_k \leq (3\pi k /2)^{2/3}$ for all $k > \theta_1$, we find
\begin{align}
    \sum_{k= \theta_1 + 1}^{\theta_2 - 1} J_s(-\ld_k - D_k)
    \geq \frac{1}{2} T^{1/3} \sum_{k= \theta_1 + 1}^{\theta_2 - 1} \pth{ s - \pth{\frac{3\pi k}{2}}^{2/3} - D_k} \, . 
    \label{4.4.anal.lemma.5.2.claim2eqn1}
\end{align}
We now bound the following sum with an integral, as the summands are decreasing in $k$:
\begin{align}
    \sum_{k= \theta_1 + 1}^{\theta_2 - 1} \pth{ s - \pth{\frac{3\pi k}{2}}^{2/3}}
    &\geq \int_{\theta_1+1}^{\theta_2-1} s - \pth{\frac{3\pi z}{2}}^{2/3} ~dz 
    \nonumber \\
    &\geq \int_0^{\theta_2-1} s - \pth{\frac{3\pi z}{2}}^{2/3} ~dz - (\theta_1+1)s \nonumber \\
    &=(\theta_2 -1) \pth{ 
    s - \frac{3}{5}\pth{\frac{3\pi}{2}}^{2/3} (\theta_2-1)^{2/3} 
    } - (\theta_1+1)s \, .
    \label{4.4.anal.lemma.5.2.claim2eqn2}
\end{align}
Note that $\theta_2 -1 \geq \frac{2s^{3/2}}{3\pi}- \frac{1}{2}$, and thus for $s \geq \pth{\frac{3\pi}{4\ep}}^{2/3}$, we have
\[
(1- \ep) \frac{2s^{3/2}}{3\pi} \leq \theta_2 -1 \leq \frac{2s^{3/2}}{3\pi}+1.
\] 
Substituting this bound into \eqref{4.4.anal.lemma.5.2.claim2eqn2} and then substituting into \eqref{4.4.anal.lemma.5.2.claim2eqn1} leads to \eqref{4.4.anal.lemma.5.2.claim2eqn}. 
This concludes the proof of Claim~\ref{claim.thm1.7.2}.
\end{proof}

Returning to the proof of Lemma~\ref{4.4.anal.lemma.5.2.anal}, we substitute the bounds given by 
\eqref{4.4.anal.lemma.5.2.claim1eqn}, \eqref{4.4.anal.lemma.5.2.claim2eqn}, and $\sum_{k= \theta_2}^{\infty} J_s(-\ld_k-D_k) \geq 0$ into 
\eqref{4.4.anal.lemma.5.2.eqn5.3} to obtain
\begin{align}
\sum_{k=1}^{\infty} J_s(\a_k)
\geq 
\frac{1}{2}T^{1/3} 
\brak{
    \frac{4s^{5/2}}{15\pi}(1-3\ep) 
    - \theta_1 \pth{ \frac{3\pi(4\mc{K}+1)}{2} }^{2/3}
    - s -\sum_{k= 1}^{\theta_2 - 1}  D_k
}. \label{lemma.5.2.anal.lasteqn}
\end{align}
Recalling $\theta_1 :=\ceil{4\mc{K}}$, we note that $\theta_1 \pth{ 3\pi(4\mc{K}+1)/2 }^{2/3}$ is a constant which can be replaced by a large constant $R > 0$. Finally, for sufficiently large $s \geq S_0$, we have $s \leq \frac{4 \ep s^{5/2}}{3 \pi}$,
and thus we may make this replacement in \eqref{lemma.5.2.anal.lasteqn} to obtain \eqref{lemma.5.2.anal.eq}. 
This completes the proof of Lemma~\ref{4.4.anal.lemma.5.2.anal}.
\end{proof}

\begin{proof}[Proof of \eqref{4.4.anal.eqn.weak} and \eqref{4.4.anal.eqn} in Proposition \ref{prop.4.2.anal}]
From \eqref{lemma.5.2.anal.eq}, we have
\begin{align}
    \prod_{k=1}^{\infty} I_s(\a_k) 
    &= 
    \exp \pth{ - \sum_{k=1}^{\infty} J_s(\a_k) }
    \leq \exp \pth{
    - \frac{1}{2} T^{1/3} \pth{ 
    \frac{4s^{5/2}}{15 \pi} (1- 8\ep) 
    - \sum_{k=1}^{\theta_0} D_k - R
    }
    } \, ,
    \label{eqn:prodI-ub}
\end{align}
for all $s\geq S_0$ and for all $T \geq 0$.
Note that for $S_0$ sufficiently large, we have
\begin{align}
\ep s \theta_0 + R 
\leq 
\frac{4s^{5/2}}{15 \pi} \pth{ \frac{5}{2}\ep + \frac{15 \pi R}{4s^{5/2}} }
< \frac{4s^{5/2}}{15 \pi} (3 \ep) 
\label{eqn:epstheta-bd}
\end{align}
for all $s \geq S_0$.
Define $\mc{S}_{\theta_0} := \sum_{k=1}^{\theta_0} D_k$.
Then \eqref{eqn:prodI-ub} and \eqref{eqn:epstheta-bd} yield
\begin{align}
    \mathds{1} \pth{ \mc{S}_{\theta_0} < \ep s \theta_0 }
    \prod_{k=1}^{\infty} I_s(\a_k)
    &\leq \exp \pth{
    -T^{1/3} \frac{2s^{5/2}}{15 \pi} (1- 11\ep)
    } \,.
\end{align}
On the other hand, if 
$S_{\theta_0} \geq \ep s \theta_0$,
then there exists at least one $k \in [1, \theta_0] \cap \Z$ such that $D_k > \ep s$. Thus,
$\{ S_{\theta_0} \geq \ep s \theta_0 \} \subset
\bigcup_{k=1}^{\theta_0} \{ D_k \geq \ep s \} $.
It follows that 
\begin{align}
    \E_{\GOE} \brak{ \prod_{k=1}^{\infty} I_s(\a_k) }
    &= 
    \E \brak{ \mathds{1}\pth{ \mc{S}_{\theta_0} < \ep s \theta_0 } \prod_{k=1}^{\infty} I_s(\a_k) }
    + \E \brak{ \mathds{1} \pth{ S_{\theta_0} \geq \ep s \theta_0 }
    \prod_{k=1}^{\infty} I_s(\a_k)  
    } \nonumber \\
    &\leq 
    \exp \pth{
    -T^{1/3} \frac{2s^{5/2}}{15 \pi} (1- 11\ep)
    }
    + 
    \E \brak{ 
    \mathds{1} \pth{ \bigcup_{k=1}^{\theta_0} \{ D_k \geq \ep s \} }  \prod_{k=1}^{\infty} I_s(\a_k) 
    }\,. \label{4.4.anal.eqnsub}
\end{align}
We split the indicator function as
\begin{align}
    \mathds{1} \pth{\bigcup_{k=1}^{\theta_0} \{ D_k \geq \ep s \} }
    \leq \mathds{1} \pth{ \bigcup_{k=1}^{\theta_0} \{ D_k \geq \ep s \} 
    \cap \{ \a_1 \geq -(1-\ep) s \}
    }
    + \mathds{1} \pth{ \a_1 \leq - (1-\ep) s }\,.
    \label{4.4.anal.eqn1}
\end{align}
Since $I_s(\a_k) \leq 1$ for all $k \in \Z_{\geq 1}$, we have that when $\a_1 \geq -(1-\ep) s$,
\begin{align}
    \prod_{k=1}^{\infty} I_s(\a_k) 
    \leq I_s(\a_1) 
    \leq \frac{1}{\sqrt{1+ \exp\pth{T^{1/3}(s+ \a_1)}}} 
    \leq 
    \exp\pth{ -\frac{1}{2}\ep sT^{1/3} }\,. \label{4.4.anal.eqn2}
\end{align}
Substituting \eqref{4.4.anal.eqn1} and \eqref{4.4.anal.eqn2} into \eqref{4.4.anal.eqnsub} gives 
\begin{align}
    \E_{\GOE} \brak{ \prod_{k=1}^{\infty} I_s(\a_k) }
    &\leq 
    \exp \pth{
    -\frac{2(1- 11\ep)}{15 \pi} T^{1/3} s^{5/2}
    }
    + \exp\pth{ -\frac{1}{2}\ep sT^{1/3} } \P \pth{
    \bigcup_{k=1}^{\theta_0} \{D_k \geq \ep s \}
    } 
    \nonumber \\
    &+ \P (\a_1 \leq -(1-\ep)s) \,.
    \label{eqn:main-ub:laststep}
\end{align}
Using \eqref{4.4.TWboundeqn}, we have 
\begin{align}
    \P (\a_1 \leq -(1-\ep)s) =
    \exp \pth{ -(1-\ep)^3 \frac{s^3}{24} \pth{1 + o(1)} }
    \leq \exp\pth{-\frac{s^3}{24}(1-C\ep)} \,,
    \label{eqn:main-ub:TW}
\end{align}
for some constant $C > 0$ and all $s$ sufficiently large.
Now, taking $C= \max \{C, 11\}$ and using Lemma~\ref{lemma.5.3.anal}, we obtain both \eqref{4.4.anal.eqn.weak} and \eqref{4.4.anal.eqn}.
\end{proof}

\begin{lemma}
\label{lemma.5.3.anal}
Fix $\eta >0$, $\ep \in (0, 1/3)$, and $\delta \in (0, 1/4)$. Then there exist  positive constants $S_0 := S_0(\eta, \ep, \delta) >0$ and $K_1 := K_1(\ep, \delta) >0$ such that the following holds for all $s \geq S_0$.
Divide the interval $[-s, 0]$ into $\ceil{2 \ep^{-1}} +1$ segments
$\mc{Q}_i := [-j \ep s /2, -(j-1)\ep s /2)$ 
for 
$j = 1, \dots, \ceil{2 \ep^{-1}} +1$.
Denote the left and right endpoints of $\mc{Q}_j$ by $p_j$ and $q_j$ respectively.
Define
$k_j := \# \{ k: - \ld_k \geq q_j \}$, where $(\ld_1 < \ld_2< \dots)$ denote the Airy operator eigenvalues. 
Then (recalling 
$\theta_0 = \lfloor 2s^{3/2}/3\pi \rfloor $),
for all $j \in \{1, \dots, \ceil{2 \ep^{-1}}+1 \}$, 
we have
\begin{align}
    \P ( \a_{k_j} \leq p_j ) &\leq \exp \pth{-\eta s^{3/2}} \, , \text{ and}
    \label{5.3.eqn.akj.weak}\\
    \P \pth{ \bigcup_{k=1}^{\theta_0} \{D_k \geq \ep s \} } &\leq \exp \pth{-\eta s^{3/2}} \, , \label{5.3.eqn.union.weak}
\end{align}
and, assuming Conjecture~\ref{conjecture},  we have 
\begin{align}
    \P ( \a_{k_j} \leq p_j ) &\leq \exp \pth{-K_1 s^{3-\delta}} \, , \text{ and}
    \label{5.3.eqn.akj}\\
    \P \pth{ \bigcup_{k=1}^{\theta_0} \{D_k \geq \ep s \} } &\leq \exp \pth{-K_1 s^{3- \delta}} \, . \label{5.3.eqn.union}
\end{align}
\begin{proof}
 
If $\a_{k_j} \leq p_j$, then
\begin{align}
    \chi^{\GOE} \pth{ [-j\ep s/2, \infty) }
    \leq k_j \, .
    \label{1.5.anal.lemma.5.3.1}
\end{align}
Corollary \ref{1.5_anal_expchi} gives us the following expressions:
\begin{align}
    k_j &= \frac{2}{3\pi}\pth{ j\ep s/2 }^{3/2} + C_1 \pth{ j\ep s/2 } \, , \text{ and }\\
    \E \brak{\chi^{\GOE}\pth{ [-j\ep s/2, \infty) }} 
    &= \frac{2}{3\pi}\pth{ j\ep s/2 }^{3/2} + C_2 \pth{ j\ep s/2 } \, ,
    \label{eqn:j-ep-s-bd}
\end{align}
where $M' := \sup_{x \geq 0} \{ \abs{C_1(x)}, \abs{C_2(x)} \} < \infty$. 
It follows from \eqref{1.5.anal.lemma.5.3.1}--\eqref{eqn:j-ep-s-bd} that if $\a_{k_j} \leq p_j$, then 
\begin{align}
    \chi^{\GOE} &\pth{ [j\ep s/2, \infty) }
    - 
     \E \brak{\chi^{\GOE}\pth{ [-j\ep s/2, \infty) }} 
     \nonumber \\
    &\leq 
    k_j - \frac{2}{3\pi}\pth{ j\ep s/2 }^{3/2}
    - C_2 \pth{ j\ep s/2 } 
    \nonumber \\
    &=
    \frac{(\ep s)^{3/2}}{3\pi \sqrt{2}}
    \pth{(j-1)^{3/2} - j^{3/2}} 
    + C_1 \pth{ (j-1)\ep s/2 } - C_2 \pth{ j\ep s/2 } 
    \nonumber \\
    &\leq 
    -M \sqrt{j} (\ep s)^{3/2} + M'\, ,
\end{align}
where $M >0 $ is a constant extracted from the fact that 
\[
(j-1)^{3/2} - j^{3/2} \leq \sqrt{j}( (j-1)- j) = -\sqrt{j} \,.
\]
It follows that 
\begin{align*}
    \P(\a_{k_j} \leq p_j) 
    \leq
    \P \pth{ 
    \chi^{\GOE}\pth{ [p_j, \infty) }
    - 
     \E \brak{\chi^{\GOE}\pth{ [p_j, \infty) }}
     \leq
    -M \sqrt{j} (\ep s)^{3/2} + M'
    } \, .
\end{align*}
Now, for sufficiently large $S_0$, we have 
\begin{align*}
    -M \sqrt{j} (\ep s)^{3/2} + M'
    \leq -\frac{M}{2}\sqrt{j}(\ep s)^{3/2} 
\end{align*}
for all $j \in \{ 1, \dots, \ceil{2 \ep^{-1}}+1\}$ and for all $ s \geq S_0$.
Assuming Conjecture~\ref{conjecture}, we may now apply equation~\eqref{1.4.final.eq} of Theorem~\ref{1.4.final}:  there exist $S_0(\ep, \delta)$ and $K_1 = K_1(\ep, \delta)$ such that for all $s \geq S_0$, 
\begin{align}
    \P(\a_{k_j} \leq p_j) 
    \leq 
    \P \pth{ 
    \chi^{\GOE}\pth{ [p_j, \infty) }
    - 
     \E \brak{\chi^{\GOE}\pth{ [p_j, \infty) }}
     \leq
    -\frac{M}{2}\sqrt{j}(\ep s)^{3/2}
    }
    \leq \exp \pth{ K_1 s^{3-\delta} } \,.
    \label{eqn:use-lem7.5-1}  
\end{align}
This proves \eqref{5.3.eqn.akj}.
Applying~\eqref{eqn:1.4.weak} instead of~\eqref{1.4.final.eq} above yields \eqref{5.3.eqn.akj.weak} (for all $s \geq S_0$, for some $S_0 := S_0(\eta, \ep, \delta))$.

Towards showing~\eqref{5.3.eqn.union.weak} and~\eqref{5.3.eqn.union}, assume $s$ is large enough so that 
$
    \ld_{\theta_0} < s
$.
We will now show that 
\begin{align}
    \bigcup_{k=1}^{\theta_0} \{ D_k \geq \ep s \} \subset 
    \bigcup_{j=1}^{\ceil{2 \ep^{-1}}+1} \{\a_{k_j} \leq p_j \} \, . \label{lemma.5.3.anal.eqn5.13}
\end{align}
First, choose $1 \leq k \leq \theta_0$ and assume that $D_k \geq \ep s$. 
There exists $1 \leq j \leq \ceil{2 \ep^{-1}}+1$ such that $-\ld_k \in \mathcal{Q}_{j-1}$. The left boundary point of $\mc{Q}_{j-1}$ is $q_j$, and since
$D_k = -\ld_k - \a_k  \geq \ep s$, we have 
$\a_k \leq -\ld_k - \ep s$.
Since $-\ld_k \geq q_j$, by definition of $k_j$, we have
$k_j \geq k$. Thus, $\a_k \geq \a_{k_j}$. It follows that 
\[
\a_{k_j} \leq \a_k
\leq -\ld_k - \ep s  \leq -\ld_{k_j} - \frac{\ep s}{2} \, ,
\]
where the last inequality uses the fact that $\ld_{k_j}, \ld_k \in \mc{Q}_{j-1}$, and thus $0 \leq \ld_{k_j} - \ld_k \leq \ep s/2$. Hence,
the distance between $\a_{k_j}$ and $-\ld_{k_j}$ is greater than or equal to $\ep s / 2$, from which it follows that $\a_{k_j} \leq p_j$. This establishes \eqref{lemma.5.3.anal.eqn5.13}.

Assuming Conjecture~\ref{conjecture}, we may combine  \eqref{5.3.eqn.akj} and  \eqref{lemma.5.3.anal.eqn5.13} to obtain
\begin{align}
    \P \pth{\bigcup_{k=1}^{\theta_0} \{ D_k \geq \ep s \} } 
    \leq \sum_{i=1}^{\ceil{2\ep^{-1}}+1} \P \pth{\a_{k_i}\leq p_i} 
    \leq \pth{\ceil{2 \ep^{-1}} +1 } \exp \pth{ -K_1 s^{3-\delta}} \,.
\end{align}
For $S_0:= S_0(\ep, \delta)$ sufficiently large, we can modify the constant $K_1:= K_1(\ep, \delta)$ to absorb the constant $\ceil{2 \ep^{-1}} +1 $. This establishes \eqref{5.3.eqn.union}.
On the other-hand, from \eqref{5.3.eqn.akj.weak} and   \eqref{lemma.5.3.anal.eqn5.13}, we obtain
\begin{align}
    \P \pth{\bigcup_{k=1}^{\theta_0} \{ D_k \geq \ep s \} } 
    \leq \sum_{i=1}^{\ceil{2\ep^{-1}}+1} \P \pth{\a_{k_i}\leq p_i} 
    \leq \pth{\ceil{2 \ep^{-1}} +1 } \exp \pth{ -\eta' s^{3/2}}\,,
\end{align}
for any $\eta' >0$. For any given $\eta >0$, we may choose $\eta'$ sufficiently close to $0$ and $S_0:= S_0(\eta, \ep, \delta)$ sufficiently large such that 
\[
\pth{\ceil{2 \ep^{-1}} +1 } \exp \pth{ -\eta' s^{3/2}} \leq \exp \pth{ -\eta s^{3/2}}\,.
\]
Thus, we have \eqref{5.3.eqn.union.weak}. This
completes the proof of Lemma~\ref{lemma.5.3.anal}.
\end{proof}
\end{lemma}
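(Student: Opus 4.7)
The central observation is that the event $\{\a_{k_j}\leq p_j\}$ is equivalent to having at most $k_j$ GOE particles in $[p_j,\infty)$, so
\[
\{\a_{k_j}\leq p_j\}\subseteq \{\chi^{\GOE}([p_j,\infty))\leq k_j\}.
\]
The plan is to show that $k_j$ lies strictly below the mean $\E[\chi^{\GOE}([p_j,\infty))]$ by an order-$\sqrt{j}\,(\ep s)^{3/2}$ gap and then invoke Theorem~\ref{1.4.final}. Concretely, with $p_j=-j\ep s/2$ and $q_j=-(j-1)\ep s/2$, Theorem~\ref{1.3_anal_prop} gives $\E[\chi^{\GOE}([p_j,\infty))]=\tfrac{2}{3\pi}(j\ep s/2)^{3/2}+O(1)$, while by definition $k_j=\#\{n:\ld_n\leq (j-1)\ep s/2\}$, so Corollary~\ref{1.5_anal_expchi} yields $k_j=\tfrac{2}{3\pi}((j-1)\ep s/2)^{3/2}+O(1)$. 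Subtracting and using the elementary inequality $(j-1)^{3/2}-j^{3/2}\leq -\sqrt{j}$ (a short mean-value-theorem check, with direct verification at $j=1$), I will obtain
\[
k_j-\E[\chi^{\GOE}([p_j,\infty))]\leq -M\sqrt{j}\,(\ep s)^{3/2}+M'
\]
for explicit constants $M,M'>0$. Taking $S_0$ large enough to absorb $M'$, the event $\{\a_{k_j}\leq p_j\}$ is contained in a lower deviation of $\chi^{\GOE}([p_j,\infty))$ of scale $\tfrac{M}{2}\sqrt{j}\,(\ep s)^{3/2}$. Rewritten in the form $c\,|p_j|^{3/2}$ as required by Theorem~\ref{1.4.final}, this corresponds to $c$ of order $1/j$, which is uniformly positive over the finitely many $j\in\{1,\ldots,\lceil 2\ep^{-1}\rceil+1\}$. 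Applying~\eqref{eqn:1.4.weak} then yields~\eqref{5.3.eqn.akj.weak}, while~\eqref{1.4.final.eq} (which requires Conjecture~\ref{conjecture}) yields~\eqref{5.3.eqn.akj}.

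For the union bound on $\{D_k\geq \ep s\}$, I will reduce to the first bound via the geometric containment
\[
\bigcup_{k=1}^{\theta_0}\{D_k\geq \ep s\}\;\subseteq\; \bigcup_{j=1}^{\lceil 2\ep^{-1}\rceil+1}\{\a_{k_j}\leq p_j\}.
\]
Suppose $D_k=-\ld_k-\a_k\geq \ep s$ for some $k\leq \theta_0$; for $s$ large, $\ld_{\theta_0}<s$, so $-\ld_k\in[-s,0)$ lies in some segment $\mc{Q}_{j-1}$, giving $q_j\leq -\ld_k<q_{j-1}$. From $-\ld_k\geq q_j$ and the definition of $k_j$ we have $k_j\geq k$, hence $\a_{k_j}\leq \a_k\leq -\ld_k-\ep s<q_{j-1}-\ep s=-(j-2)\ep s/2-\ep s=p_j$. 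A union bound over $j$, combined with the first bound and absorption of the finite multiplicative factor $\lceil 2\ep^{-1}\rceil+1$ into $\eta$ or $K_1$ (possibly at the cost of enlarging $S_0$), then gives~\eqref{5.3.eqn.union.weak} and~\eqref{5.3.eqn.union}.

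With Theorem~\ref{1.4.final} in hand, the remaining work is essentially bookkeeping. The one delicate point is the choice of partition scale $\ep s/2$: it must be fine enough that the gap $\ep s$ in the definition of $D_k$ strictly exceeds the local Airy-eigenvalue variation $\ep s/2$ within a single segment---otherwise the geometric containment argument fails---but coarse enough that the $O(1)$ errors in the asymptotic formulas for $k_j$ and $\E[\chi^{\GOE}([p_j,\infty))]$ are dominated by the deviation scale $\sqrt{j}\,(\ep s)^{3/2}$ for $s$ large. The $\sqrt{j}$ factor weakens as $j$ grows, but the uniform bound $j\leq \lceil 2\ep^{-1}\rceil+1$ keeps the implicit constant $c$ bounded below, which is what allows the uniform application of Theorem~\ref{1.4.final}.
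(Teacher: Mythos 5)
Your proposal is correct and follows essentially the same route as the paper's proof: the same containment $\{\a_{k_j}\leq p_j\}\subseteq\{\chi^{\GOE}([p_j,\infty))\leq k_j\}$, the same use of Theorem~\ref{1.3_anal_prop} and Corollary~\ref{1.5_anal_expchi} to extract the $\sqrt{j}\,(\ep s)^{3/2}$ gap via $(j-1)^{3/2}-j^{3/2}\leq-\sqrt{j}$, the same application of Theorem~\ref{1.4.final} after absorbing $j$ into the constant $c$, and the same geometric containment argument for the union over $\{D_k\geq\ep s\}$. The only (cosmetic) difference is that in the final containment step you pass directly from $-\ld_k<q_{j-1}$ to $\a_{k_j}<q_{j-1}-\ep s=p_j$, whereas the paper routes through $-\ld_{k_j}$; both are valid.
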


\subsection{Proof of the lower bound, equation \texorpdfstring{\eqref{4.5.anal.eqn}}{}}
\label{section.lower.bound.proof}
In this section we establish a lower bound on
$\E [ \prod_{k=1}^{\infty} I_s(\a_k)]$ by deriving an upper bound on $\sum_{k=1}^\infty J_s(\a_k)$. 
The result will lead us to
\eqref{4.5.anal.eqn} of Proposition \ref{prop.4.2.anal},
thus completing the proof of Theorem~\ref{thm.1.1.anal}.
We begin with an algebraic inequality from \cite{CG18}.

\begin{lemma}[{\cite[Lemma~5.6]{CG18}}]
\label{lem:CG20-lem5.6}
For all $a> 27$ and all $x \geq \sqrt{3a}$, we have 
\begin{align}
    (a+x)^{2/3} \geq a^{2/3} + x^{1/3} \, .
    \label{lemma.5.6.cg18.eqn}
\end{align}

\end{lemma}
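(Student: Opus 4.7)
The plan is to introduce the one-variable function $g(x) := (a+x)^{2/3} - a^{2/3} - x^{1/3}$ and to show that $g(x) \geq 0$ throughout the range $x \in [\sqrt{3a}, \infty)$. This naturally decomposes into a monotonicity argument on the interior of the interval and a boundary check at $x = \sqrt{3a}$.

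The monotonicity step is the easier one. Direct differentiation gives
\[
g'(x) = \tfrac{2}{3}(a+x)^{-1/3} - \tfrac{1}{3}x^{-2/3},
\]
so $g'(x) \geq 0$ is equivalent to $2 x^{2/3} \geq (a+x)^{1/3}$, i.e.\ $8 x^2 \geq a + x$. For $x \geq \sqrt{3a}$ one has $x^2 \geq 3a$, hence $8x^2 \geq 24 a$; using $a > 27$ (in particular $a > 1$), the elementary bound $a + x \leq a + x^2/\sqrt{3a} \leq 2x^2$ is easily verified, so $g'(x) \geq 0$ on $[\sqrt{3a}, \infty)$. Thus it suffices to establish $g(\sqrt{3a}) \geq 0$.

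The boundary check is the main obstacle because the inequality is essentially tight at $x = \sqrt{3a}$ for $a$ near the threshold. I would clear the fractional exponents by cubing the target inequality $(a + \sqrt{3a})^{2/3} \geq a^{2/3} + (3a)^{1/6}$, reducing it to
\[
(a + \sqrt{3a})^2 \geq \bigl(a^{2/3} + (3a)^{1/6}\bigr)^3.
\]
After substituting $u := a^{1/6}$ (so that $a = u^6$, $\sqrt{3a} = \sqrt{3}\, u^3$, $a^{2/3} = u^4$, $(3a)^{1/6} = 3^{1/6} u$), both sides become explicit polynomials in $u$, and the inequality reduces to a polynomial inequality in $u$ that one verifies for all $u > \sqrt{3}$ (the condition $a > 27$). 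Concretely, the left side expands as $u^{12} + 2\sqrt{3}\, u^9 + 3 u^6$, and the right side expands via the binomial cube to $u^{12} + 3 \cdot 3^{1/6} u^9 + 3 \cdot 3^{1/3} u^6 \cdot u + u^3 = u^{12} + 3^{7/6} u^9 + 3^{4/3} u^7 + u^3$. Grouping terms gives an inequality of the form $c_1 u^9 + c_2 u^6 \geq c_3 u^7 + u^3$ that I would verify using $u > \sqrt{3}$ to absorb the lower-degree terms into the dominant $u^9$ contribution.

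The hard part will be the boundary check, both because the inequality is nearly tight and because one must use the hypothesis $a > 27$ in a sharp way (the constant $27 = 3^3$ arising precisely from $a^{1/6} > \sqrt{3}$ after the substitution). I would expect that after the substitution, the polynomial comparison can be closed by a short argument dividing into a leading-order piece (dominant for $u$ large) and a finite-range verification near $u = \sqrt{3}$.
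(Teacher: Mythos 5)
The paper does not present its own proof of this lemma; it is cited from \cite[Lemma~5.6]{CG18} with no argument reproduced, so there is nothing to compare against within the paper itself. Judged on its own terms, though, your proposal does not close.

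Your monotonicity step is fine: $g'(x)\geq 0$ reduces to $8x^2\geq a+x$, and for $x\geq\sqrt{3a}$ with $a>27$ (hence $x\geq 9$) one has $a+x\leq x^2/3+x^2/9<8x^2$. The problem is the boundary check, and it is not a matter of a finicky finite-range verification -- the inequality $g(\sqrt{3a})\geq 0$ is simply false. To see this cleanly, write $x=c\sqrt{a}$; then as $a\to\infty$,
\begin{align*}
(a+c\sqrt a)^{2/3} = a^{2/3}+\tfrac{2c}{3}a^{1/6}+\mathcal O(a^{-1/3}),
\qquad
a^{2/3}+x^{1/3}=a^{2/3}+c^{1/3}a^{1/6},
\end{align*}
so the inequality at scale $x\asymp\sqrt a$ forces $\tfrac{2c}{3}\geq c^{1/3}$, i.e.\ $c\geq(3/2)^{3/2}=\sqrt{27/8}\approx 1.837$. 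But $c=\sqrt3\approx 1.732$ falls short, so $g(\sqrt{3a})<0$ for all large $a$; a direct check at $a=100$, $x=\sqrt{300}$ gives $(a+x)^{2/3}\approx 23.97 < 24.13\approx a^{2/3}+x^{1/3}$. Since you have proved $g$ is increasing, the left endpoint is where $g$ is smallest, so the negativity there kills the plan entirely. This discrepancy is hidden in your proposal by arithmetic slips in the cube expansion: the middle term should be $3^{4/3}u^6$ (not $3^{4/3}u^7$) and the last term should be $\sqrt3\,u^3$ (not $u^3$); once corrected, comparing $u^9$ coefficients gives $2\sqrt3\approx 3.46$ on the left versus $3^{7/6}\approx 3.60$ on the right, and similarly the $u^6$ coefficient $3<3^{4/3}$, so the would-be polynomial inequality visibly fails. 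The correct conclusion of your own analysis is that the hypothesis $x\geq\sqrt{3a}$ is not strong enough (one needs at least $x\geq\sqrt{(27/8)a}$), so either the lemma as transcribed here has an incorrect constant or a different argument is needed; in the one place the lemma is applied (in Claim~\ref{claim.eqn.5.21}), $x/\sqrt a$ is in fact bounded away from $\sqrt 3$ by a comfortable margin, so the downstream use is safe even though the lemma, read literally, is not.
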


The following lemma gives the needed upper-bound on $\sum_{k=1}^{\infty} J_s(\a_k)$ when $\a_1 \geq -s$ (see Claim \ref{claim.eqn.5.25}). 
\begin{lemma} \label{lemma.5.4.anal}
Fix $T_0 >0$. There exist positive constants $S_0$ and $B := B(T_0)$ such that for all $\ep \in (0, 1/3)$, for all $s \geq S_0$, and for all $T > T_0$, we have
\begin{align}
    \sum_{k=1}^{\infty} J_s(\a_k) \leq \frac{1}{2}\mc{L}_{T,\ep} (s+ C_{\ep}^{\GOE}) \, ,
    \label{5.14.anal.eqn}
\end{align}
where
\begin{align*}
    \mc{L}_{T,\ep} (x)
    :=
    T^{1/3}
    \pth{
    \frac{4x^{5/2}}{15 \pi}(1+ 3\ep) + 2x - B
    } 
    + 
    \frac{x^{3/2}}{3(1-\ep)^{3/2}}
    + 
    \sqrt{\frac{3}{\pi}}
    \frac{x^{3/4}}{(1-\ep)^{3/4}}
    + 
    \frac{4}{T \pi (1-\ep)^3} \, .
\end{align*}

\begin{proof}
Recall from \eqref{def_J} that $J_s(x)$ is a monotonically increasing function, and recall from \eqref{1.6.anal.eqn} that $\a_k \leq -(1-\ep) \ld_k + C_{\ep}^{\GOE}$, for all $k \in \Z_{>0}$. 
It follows that 
\begin{align}
    \sum_{k=1}^{\infty} J_s( \a_k) 
    \leq 
    \sum_{k=1}^{\infty} J_s \pth{-(1-\ep) \ld_k + C_{\ep}^{\GOE}} 
    = (\widetilde{I}) + (\widetilde{II}) + (\widetilde{III}) \, ,
    \label{5.15.eqn.anal}
\end{align}
where $(\widetilde{I})$, $(\widetilde{II})$, and $(\widetilde{III})$ equal the sum of $J_s \pth{-(1-\ep) \ld_k + C_{\ep}^{\GOE}}$ over all integers $k$ in the intervals 
$[1, \theta_1'], (\theta_1', \theta_2')$, and $[\theta_2', \infty)$ respectively, and we define 
\begin{align*}
    \theta_1' &:= \ceil{ 4 \sup_{n \in \Z_{>0}} n \abs{\mc{R}(n)} } \, , \text{ and} \\
    \theta_2' &:= \ceil{\frac{2(s + C_{\ep}^{\GOE})^{3/2}}{3\pi(1-\ep)^{3/2}} + \frac{1}{2}} \, ,
\end{align*}
where $\mc{R}(n)$ is defined as in Proposition \ref{airy_eval_bound}.
Since the $\ld_i$ are strictly decreasing in $i$,
we have 
\[
J_s \pth{-(1-\ep) \ld_k + C_{\ep}^{\GOE}} 
\leq J_s \pth{-(1-\ep) \ld_1 + C_{\ep}^{\GOE}} \, ,
\]
for all $k \geq 1$. 
Using this and the inequality 
$\log ( 1 + \exp(a) ) \leq a + \pi/2$ for any $a> 0$, 
we obtain
\begin{align}
    (\widetilde{I}) \leq 
    \theta_1' J_s \pth{-(1-\ep) \ld_1 + C_{\ep}^{\GOE}}
    \leq 
    \frac{1}{2} \pth{
    \theta_1' T^{1/3} \pth{s- (1- \ep) \ld_1 + C_{\ep}^{\GOE}}
    + \frac{\pi \theta_1'}{2}
    }. \label{5.16.anal.eqn}
\end{align}
Terms $(\widetilde{II})$ and $(\widetilde{III})$ are bounded in the following two claims.

\begin{claim} \label{claim.eqn.5.17}
For all $s > 0$, we have
\begin{equation}
    2(\widetilde{II}) \leq 
    T^{1/3} \pth{
    \frac{4(s+C_{\ep}^{\GOE})^{5/2}}{15 \pi} (1+ 3\ep) 
    +(2- \theta_1')(s + C_{\ep}^{\GOE}) 
    - 
    \frac{3}{5} \pth{\frac{3\pi}{2}}^{2/3}
    \pth{\theta_1'}^{5/3}
    } 
    + \frac{\pi(\theta_2' - \theta_1')}{2}
    \, .
    \label{5.17.anal.eqn}
\end{equation}

\begin{proof}[Proof of Claim~\ref{claim.eqn.5.17}]
Recall the constant $\mc{K}$, defined in \eqref{eqn:def-K}. 
It follows that for $k \in (\theta_1', \infty)$, we have
\[
\abs{\mc{R}(k)} \leq \frac{\mc{K}}{k} \leq \frac{\mc{K}}{\theta_1'} 
\leq 1/4 \, .
\]
Combining this with Proposition \ref{airy_eval_bound}, we find 
\begin{align}
    \ld_k \geq \pth{  
    \frac{3\pi   \pth{ k- \frac{1}{4} - \abs{\mc{R}(k)} }}{2}
    }^{2/3} 
    \geq \pth{\frac{3\pi(k- \frac{1}{2})}{2}}^{2/3} \, .
    \label{5.18.eqn.anal}
\end{align}
Using this, the inequality 
$\log ( 1 + \exp(a) ) \leq a + \pi/2$
for any $a>0$, and the monotonicity of $J_s(\cdot)$,  we obtain
\begin{align}
    (\widetilde{II}) \leq
    \frac{1}{2} \sum_{k= \theta_1' +1}^{\theta_2'-1} \pth{T^{1/3} f_s(k) + \frac{\pi}{2}} \, ,
    \label{5.19.anal.eqn}
\end{align}
where
\[
f_s(z) := s+ C_{\ep}^{\GOE} - (1- \ep) \pth{\frac{3\pi(z- \frac{1}{2})}{2}}^{2/3} \, .
\]
Since $f_s(z)$ is a monotonically decreasing function of $z$, we may bound the sum in \eqref{5.19.anal.eqn} with an integral:
\begin{align}
    \frac{1}{2}
    \sum_{k= \theta_1' +1}^{\theta_2'-1} \pth{T^{1/3} f_s(k) + \frac{\pi}{2}}
    & \leq 
    \frac{1}{2} \pth{ T^{1/3} \int_{\theta_1'}^{\theta_2'} f_s(z) ~dz
    + \frac{\pi (\theta_2' - \theta_1')}{2}
    } \, .
    \label{5.20.anal.eqn}
\end{align}
We now compute
\begin{align}
    \int_{\frac{1}{2}}^{\theta_2'} f_s(z) ~dz
    &=
    (s + C_{\ep}^{\GOE}) \pth{ \theta_2'- \frac{1}{2} } 
    - \frac{3(1-\ep)}{5}  \pth{\frac{3\pi}{2}}^{2/3}
    \pth{\theta_2' - \frac{1}{2}}^{5/3} \nonumber \\ 
    &\leq 
    (s + C_{\ep}^{\GOE}) \pth{ \frac{2(s+C_{\ep}^{\GOE})^{3/2}}{3\pi (1-\ep)^{3/2}} + \frac{3}{2} }
    - \frac{3(1- \ep)}{5} 
    \pth{\frac{3\pi}{2}}^{2/3}\pth{ \frac{2(s+C_{\ep}^{\GOE})^{3/2}}{3\pi (1-\ep)^{3/2}} }^{5/3} \nonumber \\ 
    &= \frac{4(s+ C_{\ep}^{\GOE})^{5/2}}{15(1-\ep)^{3/2}}
    + \frac{3}{2} \pth{s + C_{\ep}^{\GOE}} 
    \nonumber \\ 
    &\leq \frac{4(s+ C_{\ep}^{\GOE})^{5/2}}{15}(1+3\ep)
    + \frac{3}{2} \pth{s + C_{\ep}^{\GOE}} \, ,
    \label{5.17.anal.theta2bound}
\end{align}
and
\begin{align}
    \int_{\frac{1}{2}}^{\theta_1'} f_s(z) ~dz 
    &\geq 
    (s + C_{\ep}^{\GOE})\pth{ \theta_1' - \frac{1}{2} } 
    - \int_{\frac{1}{2}}^{\theta_1'} \pth{ 
    \frac{3\pi \pth{z- \frac{1}{2}}}
    {2}
    }^{2/3} ~dz 
    \nonumber \\ 
    &= (s + C_{\ep}^{\GOE}) \pth{\theta_1' - \frac{1}{2}} - \frac{3}{5} \pth{\frac{3\pi}{2}}^{2/3}
    \pth{\theta_1'}^{5/3} \, .
    \label{5.17.anal.theta1bound}
\end{align}
Substituting the bounds from \eqref{5.17.anal.theta2bound} and \eqref{5.17.anal.theta1bound} into 
\eqref{5.20.anal.eqn} yields the upper bound on $(\widetilde{II})$ in \eqref{5.17.anal.eqn}.
This completes the proof of Claim~\ref{claim.eqn.5.17}.
\end{proof}
\end{claim}

\begin{claim} \label{claim.eqn.5.21}
There exists a positive constant $S_0 > 0$ such that for all $s \geq S_0$, we have 
\begin{align}
    (\widetilde{III}) 
    \leq 
    \frac{1}{2} \pth{
    \sqrt{\frac{3}{\pi}} 
    \frac{\pth{s+ C_{\ep}^{\GOE}}^{3/4}}
    {(1-\ep)^{3/4}}
    + \frac{4}{T \pi (1-\ep)^3}
    } \, . 
    \label{5.21.eqn}
\end{align}

\begin{proof}[Proof of Claim~\ref{claim.eqn.5.21}]
Using the inequality $\log(1+z) \leq z$ for all $z \geq 0$, we obtain
\begin{align}
    J_s \pth{ 
    -(1-\ep)\ld_k + C_{\ep}^{\GOE} 
    } 
    \leq 
    \frac{1}{2}\exp \pth{ 
    T^{1/3} \pth{
    s- (1-\ep)\ld_k + C_{\ep}^{\GOE}
    }
    }. \label{5.22.eqn.anal}
\end{align}
Recalling the lower bound on $\ld_k$ from \eqref{5.18.eqn.anal} and the definition of $f_s(z)$ from \eqref{5.19.anal.eqn}, we find
\begin{align}
    (\widetilde{III}) \leq
    \frac{1}{2}
    \sum_{k= \theta_2'}^{\infty} \exp \pth{ 
    T^{1/3} f_s(k)
    } \, . \label{5.23.eqn.anal}
\end{align}
For all $k \geq \theta_2'$, we have 
\[
s+ C_{\ep}^{\GOE} < (1- \ep) \pth{
\frac{3\pi (\theta_2' - \frac{1}{2})}{2}
}^{2/3} \, .
\]
Since $f_s(z)$ is a monotonically decreasing function, we have 
$f_s(k) \leq f_s(\theta_2') < 0$
for all $k \geq \theta_2'$.
Thus, for all $k > \theta_2' + \sqrt{3 \theta_2'}$, $S_0$ sufficiently large, and for all $s \geq S_0$, we may write
\begin{align}
    f_s(k) <
    (1- \ep) \pth{ \pth{
\frac{3\pi (\theta_2' - \frac{1}{2})}{2}
}^{2/3}
    - 
    \pth{
\frac{3\pi (k - \frac{1}{2})}{2}
}^{2/3}
    }
    \leq
    -(1- \ep) \pth{ 
    \frac{3\pi (k- \theta_2')}{2}
    }^{1/3} \, , \label{lemma.5.5.anal.claim2.eqn1}
\end{align}
where the last inequality uses \eqref{lemma.5.6.cg18.eqn} with
\[
a := \frac{3\pi}{2} \pth{ \theta_2' - \frac{1}{2}}, \ \ \ \ \ \ 
x := \frac{3\pi}{2}( k - \theta_2 ') \, 
\]
($S_0$ need only be large enough so that $a$ and $x$ as above satisfy the conditions of Lemma~\ref{lem:CG20-lem5.6} for all $s \geq S_0$).
It follows from \eqref{lemma.5.5.anal.claim2.eqn1} and $f_s(k) <0$ that
\begin{align}
    \exp\pth{T^{1/3} f_s(k)}
    \leq 
    \begin{cases}
    1, &\text{for } k \in \left [ \theta_2', \theta_2' + \sqrt{3\theta_2'} \right ) \\
    \exp \pth{ 
    -(1- \ep) \pth{ 
    \frac{3\pi (k- \theta_2')}{2}
    }^{1/3}}, 
    &\text{for } k \in \left [ \theta_2'+ \sqrt{3\theta'}, \infty \right )
    \end{cases} \,,
\end{align}
for $S_0$ sufficiently large and for all $s \geq S_0$. 
From \eqref{5.23.eqn.anal} and the above, we find that for $S_0$ sufficiently large and all $s \geq S_0$,
\begin{align}
    2(\widetilde{III}) 
    &\leq 
    \sum_{k\in  \left [ \theta_2', \theta_2' + \sqrt{3\theta_2'} \right )} \exp \pth{ T^{1/3} f_s(k) }
    + \sum_{k \geq \theta_2' + \sqrt{3\theta_2'}} \exp \pth{ T^{1/3} f_s(k) } 
    \nonumber \\ 
    &\leq 
    1+ \sqrt{3\theta_2'} 
    + \sum_{k= \theta_2'+ \sqrt{3\theta'}}^{\infty} \exp \pth{ 
    -(1- \ep) \pth{ 
    \frac{3\pi (k- \theta_2')}{2}
    }^{1/3}} 
    \nonumber \\ 
    &\leq 
    1+\sqrt{3 \theta_2'} + \int_0^{\infty} \exp \pth{ 
    - (1- \ep) T^{1/3} \pth{ 
    \frac{3\pi z}{2}}^{1/3}
    } ~dz 
    \nonumber \\ 
    &= 1+\sqrt{3\theta_2'} + \frac{4}{T \pi(1-\ep)^3}  
    \nonumber \\
    &\leq \sqrt{ \frac{3}{\pi}} \frac{(s+ C_{\ep}^{\GOE})^{3/4}}{(1- \ep)^{3/4}} + \frac{4}{T \pi(1-\ep)^3} \,.
\end{align}
This completes the proof of \eqref{5.21.eqn} of Claim~\ref{claim.eqn.5.21}.
\end{proof}
\end{claim}

We now return to the proof of Lemma~\ref{lemma.5.4.anal}.
Define the bounded, positive constant
\[
B':=  
    \frac{3}{5} \pth{\frac{3\pi}{2}}^{2/3}     \pth{\theta_1'}^{5/3} + (1- \ep) \theta_1' \ld_1 \, .
\]
Then substituting the bounds given by
\eqref{5.16.anal.eqn}, 
\eqref{5.17.anal.eqn}, and 
\eqref{5.21.eqn} 
into \eqref{5.15.eqn.anal} yields
\begin{align}
    2\sum_{k=1}^{\infty} J_s( \a_k) 
    &\leq 
    T^{1/3} \pth{
    \frac{4(s+C_{\ep}^{\GOE})^{5/2}}{15 \pi} (1+ 3\ep) 
    +2(s + C_{\ep}^{\GOE})  
    - B'
    }
    + \frac{\pi \theta_2'}{2} \\
    &+ \sqrt{\frac{3}{\pi}} 
    \frac{\pth{s+ C_{\ep}^{\GOE}}^{3/4}}
    {(1-\ep)^{3/4}}
    + \frac{4}{T \pi (1-\ep)^3} \,. 
\end{align}
Now,
\begin{align}
    \frac{\pi \theta_2'}{2} 
    \leq \frac{\pi }{2} \pth{ 
    \frac{2}{3\pi} \frac{(s + C_{\ep}^{\GOE})^{3/2}}{(1-\ep)^{3/2}} + \frac{3}{2}
    } 
    = \frac{(s + C_{\ep}^{\GOE})^{3/2}}{3(1-\ep)^{3/2}}+ \frac{3\pi}{4} \,.
\end{align}
Taking $B : = B' - \frac{3\pi}{4T_0^{1/3}}$ yields \eqref{5.14.anal.eqn}.
\end{proof}
\end{lemma}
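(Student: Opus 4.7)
The plan is to combine the deterministic SAO eigenvalue bound of Theorem~\ref{1.6_anal} with the monotonicity of $J_s$, reducing everything to bounding a deterministic sum of the form $\sum_k J_s(-(1-\ep)\ld_k + C_\ep^{\GOE})$. Indeed, since $J_s$ is strictly increasing, Theorem~\ref{1.6_anal} gives $\a_k \leq -(1-\ep)\ld_k + C_\ep^{\GOE}$, so the problem reduces to estimating this deterministic sum using the Airy eigenvalue asymptotics of Proposition~\ref{airy_eval_bound}.

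I would then split the sum over three disjoint ranges of~$k$, demarcated by a small fixed cutoff~$\theta'_1$ (chosen so that for $k > \theta'_1$ the sharper asymptotic $\ld_k \geq (3\pi(k-1/2)/2)^{2/3}$ applies) and a large cutoff $\theta'_2 \approx \tfrac{2(s+C_\ep^{\GOE})^{3/2}}{3\pi(1-\ep)^{3/2}}$ chosen so that the argument of $J_s$ transitions from positive to very negative precisely at $k = \theta'_2$. For the short initial segment $k \in [1,\theta'_1]$, I would use the trivial bound $J_s \leq J_s(\cdot)|_{k=1}$ on each summand, producing a contribution of the form $\theta'_1 T^{1/3}(s + C_\ep^{\GOE})$ plus a bounded constant; this will be absorbed into the $2T^{1/3}x - T^{1/3} B$ piece of $\mc{L}_{T,\ep}$. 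For the middle range $\theta'_1 < k < \theta'_2$ (where the argument of $J_s$ is positive), the key inequality is $\log(1+e^a) \leq a + \pi/2$, which gives a linear upper bound on $J_s$. Bounding the resulting sum by the integral $\int_{\theta'_1}^{\theta'_2}\!\big(s+C_\ep^{\GOE} - (1-\ep)(3\pi z/2)^{2/3}\big)\,dz$ and evaluating yields the leading $\frac{4(s+C_\ep^{\GOE})^{5/2}}{15\pi(1-\ep)^{3/2}}$ term, and the pointwise $\pi/2$ correction produces an extra $\frac{\pi \theta'_2}{2}$ term. The factor $(1-\ep)^{-3/2}$ is then relaxed to $(1+3\ep)$ using $\ep < 1/3$, producing the $\frac{4 x^{5/2}}{15\pi}(1+3\ep)$ term of $\mc{L}_{T,\ep}$, while the $\pi\theta'_2/2$ correction, after substituting the definition of $\theta'_2$, produces the $\frac{x^{3/2}}{3(1-\ep)^{3/2}}$ term (with a constant absorbed into~$B$ using $T \geq T_0$).

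For the tail range $k \geq \theta'_2$ (where the argument of $J_s$ is strongly negative), I would switch to the weaker inequality $\log(1+z) \leq z$, yielding $J_s(x) \leq \tfrac12 e^{T^{1/3}(s+x)}$. The tail is then split at $k = \theta'_2 + \sqrt{3\theta'_2}$: the first $\sqrt{3\theta'_2}$ terms are each bounded by $1$, producing the $\sqrt{3/\pi}\, x^{3/4}/(1-\ep)^{3/4}$ term; on the genuine tail, Lemma~\ref{lem:CG20-lem5.6} with $a = \frac{3\pi}{2}(\theta'_2 - \tfrac12)$ and $x = \frac{3\pi}{2}(k - \theta'_2)$ yields $(1-\ep)\ld_k - (s+C_\ep^{\GOE}) \geq (1-\ep)(3\pi(k-\theta'_2)/2)^{1/3}$, giving stretched-exponential decay in $k$, and comparing the sum to $\int_0^\infty \exp(-(1-\ep) T^{1/3}(3\pi z/2)^{1/3})\,dz = \frac{4}{T\pi(1-\ep)^3}$ yields the final term of $\mc{L}_{T,\ep}$.

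The main technical obstacle I anticipate is the careful book-keeping of constants, in particular ensuring that (i)~the $(1-\ep)^{-3/2}$ factor arising from the integral evaluation in the middle range can be replaced by $(1+3\ep)$ with no residual error in the leading $s^{5/2}$ coefficient (this is where the hypothesis $\ep < 1/3$ enters), (ii)~the ceiling in the definition of $\theta'_2$ contributes only $O(s + C_\ep^{\GOE})$ lower-order corrections that can be absorbed into $2T^{1/3}x$, and (iii)~applying Lemma~\ref{lem:CG20-lem5.6} requires $k - \theta'_2$ to exceed $\sqrt{3\theta'_2}$, which is why the small initial block of size $\sqrt{3\theta'_2}$ must be separated out and is precisely what produces the $x^{3/4}$ term. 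Everything else is routine monotonicity and integral comparison.
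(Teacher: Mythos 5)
Your proposal matches the paper's proof essentially step for step: the same reduction via monotonicity of $J_s$ and Theorem~\ref{1.6_anal}, the same three-range decomposition with cutoffs $\theta_1'$ and $\theta_2'$, the same pair of logarithm inequalities ($\log(1+e^a)\le a+\pi/2$ in the middle range, $\log(1+z)\le z$ in the tail), the same integral comparisons, the same splitting of the tail at $\theta_2'+\sqrt{3\theta_2'}$ to unlock Lemma~\ref{lem:CG20-lem5.6}, and the same bookkeeping of how each piece lands in $\mc{L}_{T,\ep}$. This is correct and is the paper's argument.
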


\begin{proof}[Proof of \eqref{4.5.anal.eqn} of Proposition \ref{prop.4.2.anal}]
In what follows, 
we fix $\ep \in (0,1/3)$, $\delta \in (0,1/4)$, and $T_0 > 0$.
We begin with two claims.

\begin{claim}
\label{claim.eqn.5.25}
There exist $\kappa := \kappa ( \ep, \delta) >0$ and $S_0 = S_0 ( \ep, \delta, T_0) >0$ such that,
for all $s \geq S_0$ and $T > T_0$,
\begin{align}
    \E_{\GOE} \brak{
    \mathds{1}(\a_1 \geq -s) \prod_{k=1}^{\infty} I(\a_k)
    }
    \geq 
    \pth{1- 2\kappa\exp \pth{ - \kappa s^{1-2\delta} }}
    \exp \pth{ 
    - \frac{2T^{1/3}s^{5/2}}{15\pi} ( 1 + 9 \ep )
    } \, .
    \label{5.25.anal.eqn}
\end{align}

\begin{proof}[Proof of Claim~\ref{claim.eqn.5.25}]
Negating both sides of \eqref{5.14.anal.eqn} and then exponentiating yields
\[
\prod_{k=1}^{\infty} I(\a_k) \geq \exp \pth { -\frac{1}{2}\mc{L}_{T, \ep}(s+ C_{\ep}^{\GOE}) } \, .
\]
Since $\mc{L}_{T,\ep}(x)$ is monotonically increasing, we may bound
\begin{align}
    \E_{\GOE} \brak{
    \mathds{1}(\a_1 \geq -s) \prod_{k=1}^{\infty} I(\a_k)
    }
    \geq 
    \P \pth{ 
    \a_1 \geq -s, C_{\ep}^{\GOE} < s^{1-\delta}
    }
    \exp \pth{ 
    - \frac{1}{2}\mc{L}_{T, \ep}(s+ s^{1-\delta})
    } \, . \label{5.26.anal.eqn}
\end{align}
Take $S_0 >0$ large enough so that for all $s \geq S_0$,
\begin{align}
    \mc{L}_{T, \ep}(s+ s^{1-\delta}) \leq T^{1/3}  \frac{4s^{5/2}}{15 \pi} (1 + 9 \ep) \, . 
    \label{5.27.anal.eqn}
\end{align}
From Theorem~\ref{1.6_anal}, there exist $\kappa := \kappa (\ep, \delta)$ and a (potentially larger) $S_0$ such that. for all $s \geq S_0$, 
\[
\P (C_{\ep}^{\GOE} <s^{1- \delta}) > 1- \kappa \exp(- \kappa s^{1-2\delta}) \, .
\]
Furthermore, for large enough $S_0$, we find from \eqref{4.4.TWboundeqn} that for all $s \geq S_0$,
\begin{align*}
    \P(\a_1 < - s ) \leq \exp \pth{ 
    -\frac{1}{24}  s^3 (1 + o(1)) 
    }
    \leq \kappa \exp ( - \kappa s^{1-2\delta}) \, .
\end{align*}
Thus, for large enough $S_0$, we have
\begin{align*}
    \P \pth{ 
    \a_1 \geq -s, ~C_{\ep}^{\GOE} < s^{1-\delta}
    } 
    \geq  \P(\a_1 \geq -s) + \P( C_{\ep}^{\GOE} < s^{1-\delta} ) -1 
    \geq 1- 2\kappa\exp \pth{ - \kappa s^{1-2\delta} } \, .
\end{align*}
Plugging this and \eqref{5.27.anal.eqn} into \eqref{5.26.anal.eqn} yields equation \eqref{5.25.anal.eqn} of Claim~\ref{claim.eqn.5.25}.
\end{proof}
\end{claim}

\begin{claim} \label{Claim.eqn.5.28}
There exist constants $K_2 := K_2(T_0) >0$ and $S_0 := S_0( \ep, \delta, T_0) > 0$ such that for all $s \geq S_0$, we have
\begin{align}
    \E_{\GOE} \brak{
    \mathds{1}(\a_1 < -s ) \prod_{k=1}^{\infty} I(\a_k) 
    }
    \geq 
    \exp \pth{ - K_2s^3}
    .
    \label{5.28.anal.eqn}
\end{align}

\begin{proof}[Proof of Claim~\ref{Claim.eqn.5.28}]
Define the parameter $L := \frac{3}{1-\delta}$, and note that $L \in (3, 4]$. Let $\mathfrak{J}$ denote the interval $[-s^L, -s)$. 
We seek an upper bound first on $\sum_{\a_k \in \fk{J}} J_s(\a_k)$ and then on $\sum_{\a_k <-s^L} J_s(\a_k)$. 
Since $J_s(\cdot)$ is monotonically increasing, we obtain the following upper bound by replacing all the $\a_k$'s inside the interval 
$\fk{J}$ by the right endpoint $s$ of the interval:
\begin{align}
    \sum_{\a_k \in \fk{J}} J_s(\a_k) 
    \leq 
    \chi^{\GOE}(\fk{J}) J_s(-s)
    = 
        \frac{1}{2} 
        \chi^{\GOE}(\fk{J}_{\ell}) 
        \log 2
    \, .
\end{align}
Next, using Theorem~\ref{1.5_anal}, there exists $\mc{C}:= \mc{C}(\ep, \delta)$ and $S_0:= S_0(\ep)$ such that for all $s \geq S_0$, we have 
\begin{align}
\chi^{\GOE}(\fk{J}) \leq \E \brak{ \chi^{\GOE} (\fk{J})} + \ep s^{3L/2}
\label{4.5.proof.chi.bd}
\end{align}
holds with probability greater than or equal to $1- \exp(- \mc{C} s^3)$. 
In what follows, we will write $C$ to denote a positive constant independent of $\ep \in (0,1/3)$ and $\delta \in (0,1/4)$ (but may depend on $T_0$) whose value may change from line to line. 
Then from Theorem~\ref{1.3_anal_prop}, we have for large enough $s$ 
\begin{align}
    \E\brak{\chi^{\GOE}(\fk{J})
    } 
    = 
    \frac{2}{3\pi}
    (s^{3L/2}-s^{3/2}) + \fk{D}_1( s^L) - \fk{D}_1(s)
    \leq C s^{3L/2} \, .
\end{align}
Substituting this into \eqref{4.5.proof.chi.bd}, we may deduce that 
\begin{align}
    \sum_{\a_k \in \fk{J}} J(\a_k) 
    \leq 
    Cs^{3L/2} 
    \label{5.29.anal.eqn}
\end{align}
holds with probability greater than or equal to $1-  \exp(- \mc{C} s^3)$.

It remains to bound the sum $\sum_{\a_k < - s^L} J_s(\a_k)$, which we now decompose into two sums:
\begin{align}
    \sum_{\a_k < - s^L} J_s(\a_k)&
    = (\rm{\bf{A}}) + (\rm{\bf{B}}), ~\text{where} \\
    (\rm{\bf{A}}) 
    := \sum_{\{k: ~\a_k < - s^L, ~\ld_k \leq  s^L\}} J_s(\a_k)&,
    \ \ \ \ \ 
    (\rm{\bf{B}}) := 
    \sum_{\{k: ~\a_k < - s^L, ~\ld_k >  s^L\}} J_s(\a_k) \, .
\end{align}
Using  the bound $\log(1+a) \leq a$ for all $a\geq 0$ gives
\[
J_s(\a_k)  \leq \frac{1}{2}\exp \pth{ 
T^{1/3} \pth{ s-  s^L}
}
\leq \frac{1}{2}\exp \pth{ -(1- \ep)T^{1/3} s^3 } \, ,
\]
for $\a_k \leq -  s^L$, $S_0 := S_0(\ep, \delta)$ large enough, and all $s \geq S_0$.
Corollary \ref{1.5_anal_expchi} shows  
\[
\#\{k : \ld_k \leq  s^L \} 
= 
\frac{2}{3 \pi} s^{3L/2} + C_1(s^L)
\leq Cs^{ 3L/2 } \, .
\]
Thus, for large enough $S_0$,
we have 
\begin{align}
    (\rm{\bf{A}}) \leq 
    \frac{1}{2}Cs^{ 3L/2 } 
    \exp \pth{ 
    -(1- \ep)T^{1/3} s^3)
    } \leq s^3.
    \label{eqn:A-bound}
\end{align}

We now bound $(\rm{\bf{B}})$. 
From monotonicity and \eqref{1.6.anal.eqn}, we have 
$J_s(\a_k) \leq J_s\pth{ 
-(1-\ep) \ld_k + C_{\ep}^{\GOE}
}$, where $C_{\ep}^{\GOE}$ is as defined in Theorem~\ref{1.6_anal}.  
We now employ Theorem~\ref{1.6_anal}, taking $\tilde{s}$ and $\tilde{\delta}$ as our variables instead of the $s$ and $\delta$ in the notation of the theorem to avoid confusion (though we take the $\ep$ in the statement of Theorem~\ref{1.6_anal} to be the same as our $\ep$ here).
With
 $\tilde{s} := s^{3 + \frac{\delta}{2}}$ and $\tilde{\delta} := \frac{\delta}{2(3+ \delta/2)}$,
Theorem~\ref{1.6_anal} implies
that there exist $\kappa := \kappa(\ep, \delta) > 0$ and $S_0 := S_0 (\ep, \delta) > 0$ such that 
for all $s \geq S_0$, we have
\[
\P\pth{C_{\ep}^{\GOE} < s^{3 + \frac{\delta}{2}} }
\geq 
1- \kappa \exp \pth{-\kappa s^3} \, .
\]
Now, for large enough $S_0$, we have $s+ s^{3+ \frac{\delta}{2}} \leq (1-\ep)  s^L$. Since $s^L < \ld_k$ in $(\rm{\bf{B}})$, we have for large enough $S_0$
\begin{align}
    \P \pth{ 
    (\rm{\bf{B}}) \leq \sum_{\ld_k >  s^L} 
    J_s \pth{ 
        (1-\ep)(s^L - \ld_k ) -s
        }
    } 
    \geq 1- \kappa \exp \pth{ 
    - \kappa s^3
    }.
    \label{5.31.eqn.anal}
\end{align}
The bounds in \eqref{eqn:A-bound}, \eqref{5.31.eqn.anal}, and \eqref{5.35.eqn.anal} of Claim~\ref{lemma.5.5.anal} (given below), as well as the bound $3L/4 \leq 3$, we find that for $S_0$ large enough, 
\begin{align}
    \P \pth{
    (\rm{\bf{A}}) + (\rm{\bf{B}}) 
    \leq 
    Cs^{3}
    } \geq 1- \kappa \exp \pth{ - \kappa s^3}
    \label{eqn:A+B-delta}
\end{align}
Combining this bound with the bound  in \eqref{5.29.anal.eqn} yields 
\begin{align}
    \P( \mc{A} ) \geq 
    1
    -  \exp( - \mc{C} s^3)
    - \kappa \exp \pth{ - \kappa s^3},
    \label{5.32.anal.eqn}
\end{align}
where $\mc{A} := \left \{ 
\sum_{k=1}^{\infty} J_s(\a_k) \leq Cs^{3}
\right \}$.
We then obtain
\begin{align}
    \E_{\GOE} \brak{
    \mathds{1}(\a_1 < -s ) \prod_{k=1}^{\infty} I(\a_k) 
    }
    \geq 
    \P \pth{\{\a_1 < -s\} \cap \mc{A}} \exp(- C s^{3}) \, .
    \label{5.33.anal.eqn}
\end{align}
We finally estimate, for a constant $K_2 >0$ and for large enough $S_0$,
\begin{align}
    \P \pth{\{\a_1 \leq -s\} \cap \mc{A}} 
    &\geq 
    \P(\a_1 \leq -s ) + \P(\mc{A}) - 1 \nonumber \\
    &\geq 
    \exp\pth{-s^3}
    -  \exp( - \mc{C} s^3)
    - \kappa \exp \pth{ - \kappa s^3} \nonumber \\
    &\geq \exp\pth{-C' s^3},
    \label{5.34.anal.eqn}
\end{align}
where the first inequality uses 
$\P( A \cap B) \geq \P(A) + \P(B) -1$ for any events $A$ and $B$, and the second inequality uses  \eqref{4.4.TWboundeqn} and the lower bound in \eqref{5.32.anal.eqn}.
Substituting \eqref{5.34.anal.eqn} into \eqref{5.33.anal.eqn} yields  \eqref{5.28.anal.eqn}.
This concludes the proof of Claim~\ref{Claim.eqn.5.28}.
\end{proof}
\end{claim}

We may now complete the proof of \eqref{4.5.anal.eqn} of Proposition \ref{prop.4.2.anal} by substituting \eqref{5.25.anal.eqn} and \eqref{5.28.anal.eqn} into 
\begin{align}
    \E_{\GOE} \brak{ \prod_{k=1}^{\infty} I(\a_k) 
    } 
    = 
    \E_{\GOE} \brak{
    \mathds{1}(\a_1 \geq -s ) \prod_{k=1}^{\infty} I(\a_k) 
    }
    +
    \E_{\GOE} \brak{
    \mathds{1}(\a_1 < -s ) \prod_{k=1}^{\infty} I(\a_k) 
    }.
\end{align}
\end{proof}

\begin{claim} \label{lemma.5.5.anal}
Fix $\ep \in (0,1/3)$, $\delta \in (0, 1/4)$ and $T_0 > 0$. There exists a positive constant $S_0 := S_0 (\ep, \delta)$ such that for all $s \geq S_0$, we have
\begin{align}
    \sum_{\ld_k >  s^L} J_s \pth{ (1-\ep)( s^L - \ld_k) - s
    }
    &\leq
    Cs^{3L/4} \, .
    \label{5.35.eqn.anal}
\end{align}

\begin{proof}
For sufficiently large $s$, \eqref{1.5_anal_evalest} implies that 
\begin{align}
    \left \{ k : \ld_k >  s^L \right \} \subseteq 
    \left \{ k: k > \frac{2}{3\pi} \pth{ s^L}^{3/2} - \frac{3}{4} \right \}.
    \label{lemma.5.5.k.bound}
\end{align}
This gives
\begin{align}
    \sum_{\ld_k >  s^L} J_s \pth{ (1-\ep)( s^L - \ld_k) - s
    }
    &\leq 
    \sum_{ k > \frac{2}{3\pi}s^{3L/2} - \frac{3}{4}}
    J_s\pth{ 
    (1- \ep)(s^L - \ld_k) -s
    }. \label{lemma.5.5.eqn.1}
\end{align}
To simplify the calculations that follow, we denote
$\theta_0 := \frac{2}{3\pi}s^{3L/2}- \frac{3}{4}$ and 
$\theta_0' := \theta_0+ \sqrt{\frac{2}{\pi}}s^{3L/4}$. 
Note that for $\ld_k > \theta_0$, we have
$(1-\ep)( s^L - \ld_k) - s < 0$ for sufficiently large $S_0$. We then use the fact that, for $x\leq -s$, we have 
$J_s(x) \leq \frac{1}{2} \log 2$.  
This is the bound we take on $J_s(\cdot)$ for $k  \in [\theta_0 , \theta_0']$. 

For $ k > \theta_0'$, we recall the inequality $\log(1+z) \leq z$ for $z \geq 0$, which gives 
\begin{align}
    J_s((1-\ep)( s^L - \ld_k) - s) 
    &\leq 
    \frac{1}{2}\exp \pth{(1-\ep) T^{1/3} (s^L - \ld_k)} \, .
    \label{lemma.5.5.eqn.2}
\end{align}
Define $\bar{k} := k - \frac{1}{4} + \mc{R}(n)$ and $k' := k - \theta_0$, and note that $\bar{k} > \theta_0$ for $k > \theta_0'$. Then Taylor's theorem yields
\begin{align}
    s^L - \ld_k  
    = 
    \pth{\frac{3\pi}{2} \Big (\theta_0+\frac{3}{4} \Big )}^{2/3} 
    - \pth{
    \frac{3\pi}{2} \bar{k}
    }^{2/3} 
    \leq - C (k')^{2/3} \, .
    \label{lemma.5.5.eqn.3}
\end{align}
Now, substituting the bound given in \eqref{lemma.5.5.eqn.3}
into \eqref{lemma.5.5.eqn.2} yields
\begin{align}
    J_s((1-\ep)( s^L - \ld_k) - s) 
    &\leq 
    \begin{cases}
        \frac{1}{2}\log 2 &k \in [\theta_0, \theta_0'] \cap \Z \\
        \frac{1}{2} \exp \pth{ 
        - C (1-\ep)T^{1/3} (k')^{2/3}
        } 
        &k \in (\theta_0', \infty) \cap \Z
    \end{cases} 
    \, .
\end{align}
From this bound, we have
\begin{align}
    \sum_{\ld_k >  s^L} J_s \pth{ (1-\ep)( s^L - \ld_k) - s
    } &\leq 
    \frac{1}{2} (\theta_0' - \theta_0) \log 2 
    + 
    \frac{1}{2}\sum_{k' > \theta_0' - \theta_0} 
    \exp \pth{ 
        - C (1-\ep)T^{1/3} (k')^{2/3}
        } \\
    &\leq \frac{1}{\sqrt{2\pi}} s^{3L/4} \log 2
    + \frac{C}{(1-\ep) T^{1/3}} \\
    &\leq C s^{3L/4} \, ,
\end{align}
where the second-to-last inequality follows by bounding the sum with an integral. This gives the claim.
\end{proof}
\end{claim}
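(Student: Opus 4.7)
The plan is to split the sum over $\{k : \ld_k > s^L\}$ into a boundary regime and a tail regime based on how far the index $k$ lies from the threshold $\theta_0 := \frac{2}{3\pi}s^{3L/2} - \frac{3}{4}$, which, by Proposition~\ref{airy_eval_bound}, is approximately the smallest index for which $\ld_k > s^L$. Since the argument $(1-\ep)(s^L - \ld_k) - s$ is negative (in fact $\leq -s$) whenever $\ld_k > s^L$ and $s$ is large, we are always in a regime where $J_s$ is small, but the magnitude of this bound varies dramatically with how far $\ld_k$ is past $s^L$.

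For the boundary regime, I would take the cutoff $\theta_0' := \theta_0 + \sqrt{2/\pi}\, s^{3L/4}$. On $k \in (\theta_0, \theta_0']$, use only the trivial monotonicity bound $J_s((1-\ep)(s^L - \ld_k) - s) \leq J_s(-s) = \tfrac{1}{2}\log 2$. The number of such integers is at most $\sqrt{2/\pi}\, s^{3L/4}$, yielding a contribution of order $s^{3L/4}$, matching the target.

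For the tail regime $k > \theta_0'$, I would combine the pointwise estimate $\log(1+z) \leq z$ for $z \geq 0$ to obtain
\begin{equation*}
    J_s\bigl((1-\ep)(s^L - \ld_k) - s\bigr) \leq \tfrac{1}{2}\exp\bigl((1-\ep)T^{1/3}(s^L - \ld_k)\bigr)
\end{equation*}
with a lower bound on $\ld_k - s^L$ coming from the concavity-breaking inequality of Lemma~\ref{lem:CG20-lem5.6}. Specifically, applying that lemma with $a := \tfrac{3\pi}{2}\theta_0$ and $x := \tfrac{3\pi}{2}(k - \theta_0)$ (noting that the hypothesis $x \geq \sqrt{3a}$ is exactly what the cutoff $\theta_0'$ was designed to guarantee, since $\sqrt{3a} \asymp s^{3L/4}$) gives $\ld_k - s^L \geq C(k - \theta_0)^{1/3}$ for some absolute $C > 0$. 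Substituting, the tail sum is bounded by
\begin{equation*}
    \tfrac{1}{2}\sum_{m \geq 1} \exp\bigl(-C(1-\ep)T^{1/3}\,m^{1/3}\bigr),
\end{equation*}
which I would estimate by a sum-to-integral comparison and the substitution $u = m^{1/3}$, yielding a constant bounded uniformly for $T \geq T_0$. Adding the two contributions gives the desired $Cs^{3L/4}$ bound.

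The main obstacle is locating the correct threshold $\theta_0'$: it must be large enough for Lemma~\ref{lem:CG20-lem5.6} to kick in and give the crucial $(k-\theta_0)^{1/3}$ lower bound, yet small enough that the coarse $\tfrac{1}{2}\log 2$ bound on the boundary piece still produces only $O(s^{3L/4})$ terms. The choice $\theta_0' - \theta_0 \asymp s^{3L/4}$ balances these two requirements exactly, which is why $s^{3L/4}$ is the natural scale appearing in the final bound. Once that balance is identified, the remaining estimates are routine Taylor expansion and an elementary convergent-sum computation.
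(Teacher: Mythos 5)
Your proposal reproduces the paper's proof in all its essential features: the same cutoff $\theta_0' - \theta_0 = \sqrt{2/\pi}\,s^{3L/4}$, the same coarse $\tfrac{1}{2}\log 2$ bound on the boundary block (giving the dominant $O(s^{3L/4})$ contribution), the same pointwise estimate $\log(1+z)\leq z$ combined with a $(k-\theta_0)^{1/3}$-type lower bound on $\lambda_k - s^L$ in the tail, and the same sum-to-integral comparison yielding a $T_0$-dependent constant. Your explicit appeal to Lemma~\ref{lem:CG20-lem5.6} — and your observation that the choice $\theta_0' - \theta_0 \asymp s^{3L/4}$ is exactly what makes its hypothesis $x \geq \sqrt{3a}$ available — is in fact a cleaner way to justify the step the paper attributes loosely to ``Taylor's theorem'' (and where the paper's displayed exponent $(k')^{2/3}$ should read $(k')^{1/3}$, as you have it; either way the tail series converges), so the two arguments are the same modulo this small improvement in bookkeeping.
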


\newpage
\bibliography{references}{}
\nocite{*}
\bibliographystyle{alpha}

\end{document}